\newtheorem{theorem}{Theorem}[subsection]
\newtheorem{definition}[theorem]{Definition}
\newtheorem{lemma}[theorem]{Lemma}
\newtheorem{lemma-definition}[theorem]{Lemma/Definition}
\newtheorem{proposition}[theorem]{Proposition}
\newtheorem{corollary}[theorem]{Corollary}
\newtheorem{remark}[theorem]{\it Remark}
\newtheorem{example}[theorem]{Example}
\newtheorem{notation}[theorem]{Notation}
\numberwithin{equation}{subsection}
\newtheorem{stheorem}{Theorem}[section]
\newtheorem{sdefinition}[stheorem]{Definition}
\newtheorem{slemma-definition}[stheorem]{Lemma/Definition}
 \newcommand{\Azscriptsize}{\mbox{\scriptsize\it A$\!$z}}
\newcommand{\Bun}{\mbox{\it Bun}\,}
 \newcommand{\scriptsizeChow}{\mbox{\scriptsize\it Chow}}
 \newcommand{\Cohfrak}{\mbox{\it ${\frak C}$oh}\,}
\newcommand{\Endsheaf}{\mbox{\it ${\cal E}\!$nd}\,}
\newcommand{\GL}{\mbox{\it GL}}
\newcommand{\Hilb}{\mbox{\it Hilb}\,}
 \newcommand{\scriptsizeHilb}{\mbox{\scriptsize\it Hilb}}
\newcommand{\Image}{\mbox{\it Im}\,}
\newcommand{\Ker}{\mbox{\it Ker}\,}
\newcommand{\Nil}{\mbox{\it Nil}\,}
\newcommand{\Proj}{\mbox{\it Proj}\,}
\newcommand{\Quot}{\mbox{\it Quot}}
 \newcommand{\scriptsizeQuot}{\mbox{\scriptsize\it Quot}}
\newcommand{\Space}{\mbox{\it Space}\,}
\newcommand{\Spec}{\mbox{\it Spec}\,}
 \newcommand{\boldSpec}{\mbox{\it\bf Spec}\,}
 \newcommand{\smallboldSpec}{\mbox{\small\it\bf Spec}\,}
\newcommand{\Supp}{\mbox{\it Supp}\,}
 \newcommand{\scriptsizeSupp}{\mbox{\scriptsize\it Supp}\,}
\newcommand{\Sym}{\mbox{\it Sym}}
\newcommand{\Wilson}{\mbox{\it\tiny Wilson}}
\newcommand{\degree}{\mbox{\it deg}\,}
\newcommand{\length}{\mbox{\it length}\,}
\newcommand{\nc}{\mbox{\scriptsize\it nc}}
 \newcommand{\tinync}{\mbox{\tiny\it nc}}
\newcommand{\pr}{\mbox{\it pr}}
\newcommand{\pt}{\mbox{\it pt}}
\newcommand{\rank}{\mbox{\it rank}\,}
\newcommand{\redscriptsize}{\mbox{\scriptsize\rm red}\,}
\newcommand{\reg}{\mbox{\it reg}\,}
\newcommand{\reldegree}{\mbox{\rm rel.deg}\,}
\newcommand{\singletonscriptsize}{\mbox{\scriptsize\it singleton}\,}
 \newcommand{\smoothtiny}{\mbox{\tiny\it smooth}\,}
\newcommand{\torsionscriptsize}{\mbox{\scriptsize\it torsion}\,}
\newcommand{\precleftarrow}{\vspace{-.2ex}\begin{array}{c}\prec \\[-1.9ex]
                                             \leftarrow \end{array}}
\begin{document}

\enlargethispage{23cm}

\begin{titlepage}

$ $

\vspace{-1cm}

\noindent\hspace{-1cm}
\parbox{6cm}{\small August 2008}\
   \hspace{8cm}\
   \parbox[t]{5cm}{math.AG/yymm.nnnn \\ D(2)$\,$: D1, morphism.}

\vspace{2cm}

\centerline{\large\bf
 Morphisms from Azumaya prestable curves with a fundamental module}
\vspace{1ex}
\centerline{\large\bf
 to a projective variety:
 Topological D-strings as a master object for curves}

\bigskip

\vspace{3em}
\centerline{\large
  Si Li,\hspace{1ex}
  Chien-Hao Liu,\hspace{1ex}
  Ruifang Song,\hspace{1ex}\hspace{1ex}and\hspace{1ex}
  Shing-Tung Yau
}

\vspace{6em}

\begin{quotation}
\centerline{\bf Abstract}

\vspace{0.3cm}

\baselineskip 12pt  
{\small
 This is a continuation of our study of the foundations of D-branes
  from the viewpoint of Grothendieck
  in the region of the related Wilson's theory-space
  where ``branes" are still branes.
 In this work, we focus on D-strings and
  construct the moduli stack of morphisms from Azumaya prestable
  curves $C^{A\!z}$ with a fundamental module ${\cal E}$
  to a fixed target $Y$ of a given combinatorial type.
 Such a morphism gives a prototype for a Wick-rotated D-string
  of B-type on $Y$, following the Polchinski-Grothendieck Ansatz,
  and this stack serves as a ground
  toward a mathematical theory of topological D-string world-sheet
  instantons.
} 
\end{quotation}

\vspace{11em}

\baselineskip 12pt
{\footnotesize
\noindent
{\bf Key words:} \parbox[t]{14cm}{Polchinski-Grothendieck Ansatz,
  D-string of B-type, Azumaya prestable curve, Chan-Paton sheaf,
  fundamental module, morphism, surrogate, combinatorial type,
  moduli stack. D0-brane smearing, D-string world-sheet instanton.
 } } 

\bigskip

\noindent {\small
MSC number 2000: 14A22, 81T30; 14D20, 14A20, 81T75.
} 

\bigskip
\baselineskip 10pt
{\scriptsize
\noindent{\bf Acknowledgements.}
 We thank Andrew Strominger and Cumrun Vafa for topic
  course/lecture/conversations that continue to influence
  our understanding.
 S.L.\ thanks in addition
  Xi Yin
   for communicating issues on D-brane moduli from physics point
   of view.
 C.-H.L.\ thanks in addition
  Liang Kong
    for lectures/explanations/insights of D-branes from
    boundary conformal field theory aspect  and
  Eric Sharpe
    for communicating to us a long itemized explanation of
    stringy/brany issues/themes/insights
   that both so generously share with and guide us;
  Miao Li and Edoardo Sernesi for communication on issues in project;
  Wenxuan Lu
   for insightful discussions;
  Nathan Berkovits, Frederik Denef, Sergei Gukov, Kentaro Hori, and
  Yongbin Ruan
   for related talks/discussions/conversations;
  Howard Georgi, Michael Hopkins, Pedram Safari, A.S.,
  Katrin Wehrheim, Lauren Williams
   for topic courses fall 2007- spring 2008;
  Ann Willman for inspiring words and hospitality and
  Ling-Miao Chou for moral support.
 The project is supported by NSF grants DMS-9803347 and DMS-0074329.
} 

\end{titlepage}

\newpage
\begin{titlepage}

$ $

\vspace{12em} 

\centerline{\small\it
 Chien-Hao Liu dedicates this work to Ling-Miao Chou}
\centerline{\small\it
 for her tremendous love}
\centerline{\small\it
 throughout his preparatory brewing decade on D-branes.}

\end{titlepage}

\newpage
$ $

\vspace{-4em}  

\centerline{\sc
 Morphisms from Azumaya Curves with Fundamental Modules and D-Strings}

\vspace{2em}

\baselineskip 13pt  

\begin{flushleft}
{\Large\bf 0. Introduction and outline.}
\end{flushleft}

\begin{flushleft}
{\bf Introduction.}
\end{flushleft}
In [L-Y2], we addressed
 the foundation of D-branes
 in the region of the related Wilson's theory-space
   where ``branes" are still branes
 from Grothendieck's point of view
 at Polchinski ([Po1], [Po2]) and Witten ([Wi]).
In the current work, we
 continue this line of study with focus on D-strings and
 lay down part of the foundations
 to address D-string world-sheet instantons.

After highlighting in Sec.~1 from [L-Y2]
 the Polchinski-Grothendieck Ansatz and
 how it leads to a realization of D-branes as morphisms from
 Azumaya schemes with a fundamental module
 to an open-string target-space(-time),
we give a self-contained re-do of this
 for D-strings on a commutative target-space in Sec.~2 and
 bring out the moduli stack
 ${\frak M}_{\Azscriptsize(g,r,\chi)^f}(Y,\beta)$
 of morphisms from Azumaya prestable curves with a fundamental module
 to a fixed target space $Y$
 of combinatorial type $(g,r,\chi\,|\,\beta)$.
The observation that the related Azumaya-type noncommutative-geometric
 setting in the problem (Sec.~2.1) can be recast back to
 a purely commutative-geometric setting
 makes the related moduli problem more accessible (Sec.~2.2).
We prove then a boundedness property for family of morphisms
 in question using this observation (Sec.~2.3) and
 give another presentation of such morphisms when the target
  is a projective space (Sec.~2.4).

In Sec.~3, we discuss further the moduli stack
 ${\frak M}_{\Azscriptsize(g,r,\chi)^f}(Y,\beta)$
 as a preparation for other parts in the project.
A few terse remarks are given in Sec.~4
 to relate the current work to technical themes in separate works.

\bigskip

\noindent
{\bf Convention.}
 %
 \begin{itemize}
  \item[$\cdot$]
   Mathematicians are referred to
   [Po1], [Po2], and [Jo]
    for basics of D-branes when they are still branes,
   [Do] and [Sh] for an insight of how coherent sheaves
    and complexes thereof
    come to play as supersymmetric solitonic D-branes of B-type,
   [As] for a review of geometric phase of D-branes
    from open-string target-space(-time) aspect, and
   [H-H-P] for D-branes of B-type
    from the $2d$-gauged-linear-sigma-model-with-boundary
    point of view.

  \item[$\cdot$]
   All schemes are Noetherian and of finite type over ${\Bbb C}$.
   Similarly for their morphisms and products.
 \end{itemize}

\bigskip

\begin{flushleft}
{\bf Outline.}
\end{flushleft}
{\small
\baselineskip 11pt  
\begin{itemize}
 \item[1.]
  Polchinski-Grothendieck Ansatz, Azumaya prestable curves,
   supersymmetric D-strings of B-type, and surrogates.

 \item[2.]
  Morphisms from Azumaya prestable curves with fundamental modules
   to a projective variety.
  \vspace{-.6ex}
  \begin{itemize}
   \item[2.1]
    Morphisms, their associated surrogate, and
    a prototype description of topological D-strings.

   \item[2.2]
    Azumaya without Azumaya, morphisms without morphisms.

   \item[2.3]
    Boundedness of morphisms.

   \item[2.4]
    Morphisms from an Azumaya prestable curve to ${\Bbb P}^k$.
  \end{itemize}

 \item[3.]
  The stack ${\frak M}_{A\!z(g,r,\chi)^f}(Y,\beta)$ of morphisms
  with a fundamental module.
  \vspace{-.6ex}
  \begin{itemize}
   \item[3.1]
    D$0$-branes revisited: the stack ${\frak M}^{D0}_r(Y)$
     of D0-branes of type $r$ on $Y$.

   \item[3.2]
    The stack ${\frak M}_{A\!z(g,r,\chi)^f}(Y,\beta)$  and
     its decorated atlas.
  \end{itemize}

 \item[4.]
  Remarks: D-strings as a master object for curves and
           D-string world-sheet instantons.
\end{itemize}
} 

\newpage

\section{Polchinski-Grothendieck Ansatz, Azumaya prestable curves,
         supersymmetric D-strings of B-type, and surrogates.}

We introduce in this section Azumaya prestable curves
  with a fundamental module and
 review the most relevant points in [L-Y2] along the way.
See ibidem for details and references.

\bigskip

\begin{flushleft}
{\bf From Polchinski to Grothendieck:
     Noncommutative structure on D-branes.}
\end{flushleft}
A {\it D-brane} (i.e.\ {\sl D}{\it irichlet} {\it mem}{\sl brane})
 is meant to be a boundary condition for open strings
 in whatever form it may take,
 depending on where we are in the related Wilson's theory-space.
A realization of D-branes that is most related to the current work
 is an embedding $f:X\rightarrow Y$ of a manifold $X$
 into the open-string target space-time $Y$
 with the end-points of open strings being required to lie in $f(X)$.
This sets up a $2$-dimensional Dirichlet boundary-value problem
 from the field theory on the world-sheet of open strings.
Oscillations of open strings with end-points in $f(X)$
 then create various fields on $X$,
 whose dynamics is governed by open string theory.
This is parallel to the mechanism that
 oscillations of closed strings create fields in space-time $Y$,
 whose dynamics is governed by closed string theory.
Cf.~Figure 1-1.

\begin{figure}[htbp]
 \epsfig{figure=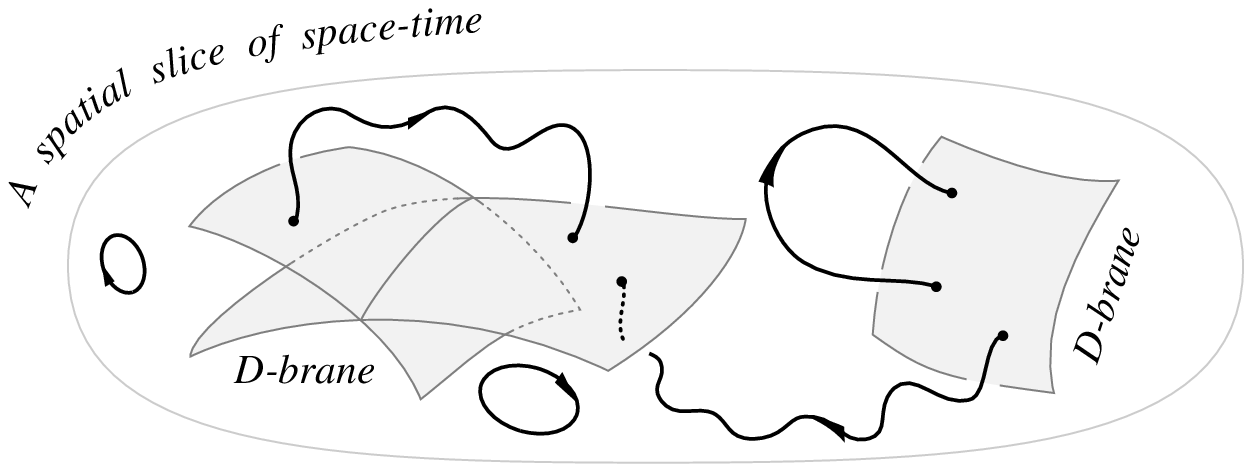,width=16cm}
 \centerline{\parbox{13cm}{\small\baselineskip 12pt
  {\sc Figure} 1-1.
  D-branes as boundary conditions for open strings in space-time.
  This gives rise to interactions of D-brane world-volumes
   with both open strings and closed strings.
  Properties of D-branes,
    including the quantum field theory on their world-volume and
              deformations of such,
   are governed by open and closed strings via this interaction.
  Both oriented open (resp.\ closed) strings and
   a D-brane configuration are shown.
  }}
\end{figure}

\noindent
In this setting, $f$ is realized in local coordinates
 as a tuple of ${\Bbb R}$-valued scalar fields on $X$,
 describing the fluctuations/shapes of the D-brane
 after modding out the longitudinal redundancy along $X$.
When there are $r$-many coincident $D$-branes
 with each of which being described by the same $f$,
 open string theory dictates that
 the components of the tuple that are transverse to $f(X)$
 are enhanced to $M_r({\Bbb R})$-valued.
The meaning of this noncommutative enhancement of such scalar fields
 on coincident D-branes is somewhat mysterious
 from the aspect of space-time itself.
(See the work of Polchinski, e.g.\ [Po1] and [Po2], and
     Witten [Wi] for more details;
  and [L-Y2: references] for a short list of literatures.)

However, when a formal noncommutative extension of Grothendieck's
 local contravariant equivalence of
 (function rings, morphisms) and (geometries, morphisms)
 is applied to the above picture,
the mathematical meaning/content of this enhancement becomes
 the following statement:
 ([L-Y2: Introduction and Sec.\ 2.2]),
 cf.\ Figure~1-2.

\bigskip

\noindent
{\bf Polchinski-Grothendieck Ansatz [D-brane: noncommutativity].}
{\it
 A D-brane (or D-brane world-volume) $X$ carries an Azumaya-type
  noncommutative structure locally associated to a function ring
  of the form $M_r(R)$ for some $r\in {\Bbb Z}_{\ge 1}$ and ring $R$.
 Here, $M_r(R)$ is the $r\times r$ matrix-ring over $R$.
} 

\bigskip

\begin{figure}[htbp]
 \epsfig{figure=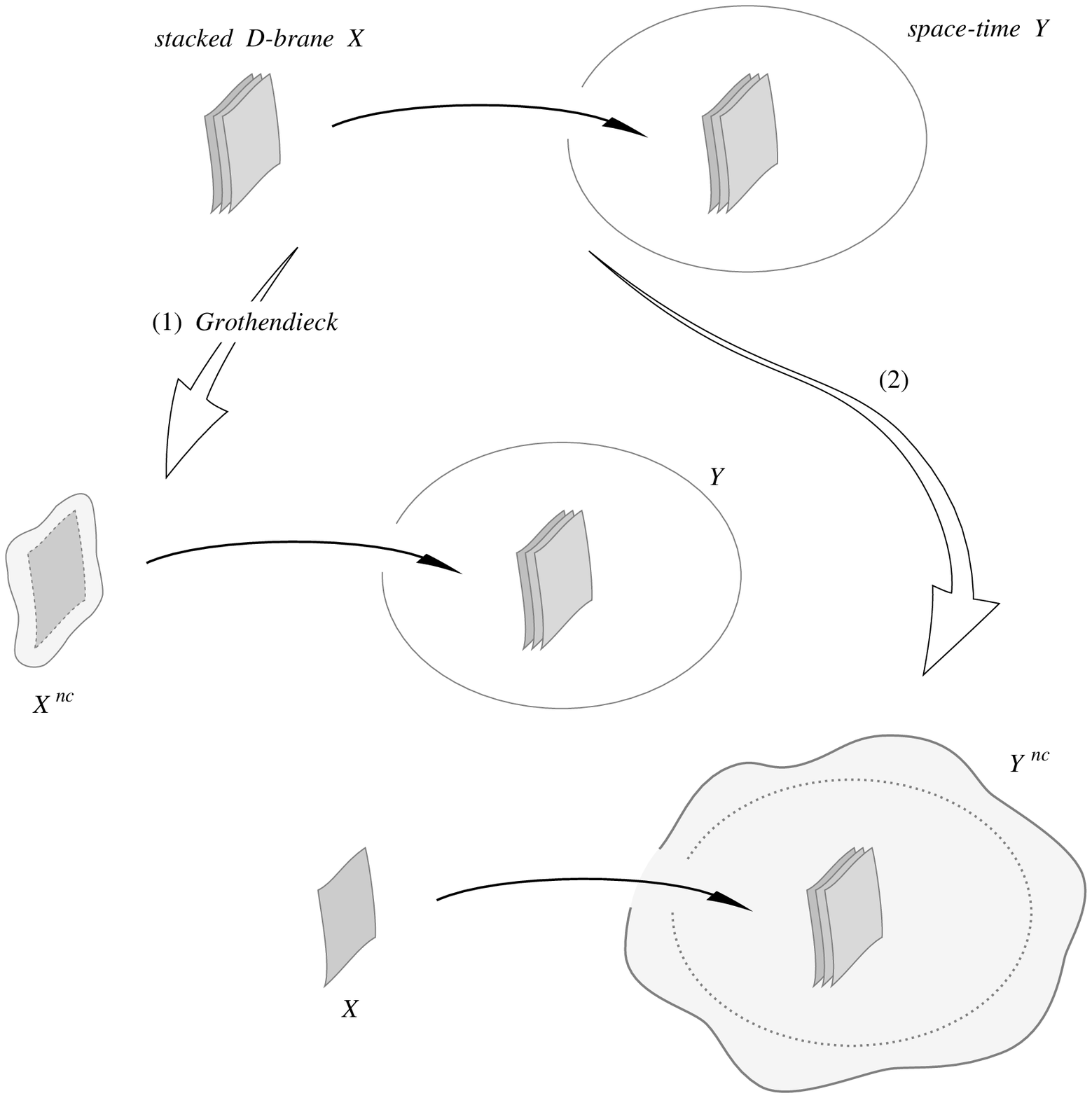,width=16cm}
 \centerline{\parbox{13cm}{\small\baselineskip 12pt
  {\sc Figure} 1-2.
  Two dual/counter aspects on noncommutativity related
   to coincident/stacked D-branes:
   (1) noncommutativity of D-brane world-volume
       as its fundamental/intrinsic nature
   versus
   (2) noncommutativity of space-time as probed by stacked D-branes.
  Aspect (1) leads to the {\it Polchinski-Grothendieck Ansatz} and
   is more fundamental from Grothendieck's viewpoint
   of contravariant equivalence of the category of local geometries
   and the category of function rings.
  }}
\end{figure}

\noindent
An additional statement hidden in this Ansatz that follows from
 mathematical naturality is that
\begin{itemize}
 \item[$\cdot$] {\it
  fields on $X$ are local sections of sheaves ${\cal F}$
   of modules of the structure sheaf ${\cal O}_X^{\nc}$
   of $X$ associated to the above noncommutative structure.}
\end{itemize}
Furthermore, this noncommutative structure on D-branes
 (or D-brane world-volumes)
 is more fundamental than that of space-time in the sense that,
\begin{itemize}
 \item[$\cdot$] {\it
  from Grothendieck's equivalence,
   the noncommutative structure of space-time, if any, can be detected
   by a D-brane only when the D-brane probe itself is noncommutative.}
\end{itemize}
When the closed-string-created B-field on the open-string target
 space-time $Y$ is turned off,
 $R$ in the Ansatz is commutative.
This is the case we will be considering throughout the work.

When D-branes are taken as fundamental objects as strings,
 we no longer want to think of their properties as derived
 from open strings.
Rather, D-branes should have their own intrinsic nature
 in discard of open strings.
Only that when D-branes co-exist with open strings in space-time,
 their nature has to be compatible/consistent with
 the originally-open-string-induced properties thereon.
It is in this sense that we think of a D-brane world-volume
 as an Azumaya-type noncommutative space, following the Ansatz,
 on which other additional compatible structures
  -- in particular, a Chan-Paton module -- are defined.

\bigskip

\begin{flushleft}
{\bf Azumaya prestable curves and
     the world-sheet of SUSY D-strings of B-type.}
\end{flushleft}
The Polchinski-Grothendieck Ansatz for D-branes applies
 to both nonsupersymmetric and supersymmetric D-branes,  and
 to both D-branes of A-type and D-branes of B-type
 in the latter case.
After Wick-rotation,
 the world-volume of D-branes of B-type are holomorphic objects and
 can be studied in the realm of algebraic geometry.
In the picture of Polchinski highlighted in the previous theme,
 the end-points of open strings serve as the source of gauge fields
 on $X$.
The latter correspond to Chan-Paton bundles-with-a-connection on $X$.
In particular,
with the Ansatz and
  a Kobayashi-Hitchin type correspondence/Donaldson-Uhlenbeck-Yau
  type theorem
 in mind,
a mathematical object that can serve as a prototype
 for the (Wick-rotated) world-sheet of a supersymmetric D-string
 of B-type is then given by

\begin{sdefinition}
{\bf [Azumaya prestable curve with a fundamental module].}
{\rm
 (Cf.\ [L-Y2: Definition 1.1.1, Definition 2.2.3].)
 An {\it Azumaya prestable curve} (over ${\Bbb C}$)
   {\it with a fundamental module}
  consists of the triple $(C, {\cal O}_C^{\Azscriptsize}, {\cal E})$,
  where
   $C$ is a nodal curve,
    whose structure sheaf is denoted by ${\cal O}_C$;
   ${\cal O}_C^{\Azscriptsize}$ is a sheaf of noncommutative
    ${\cal O}_C$-algebra, and
   ${\cal E}$ is a (left) ${\cal O}_C^{\Azscriptsize}$-module
    such that, as ${\cal O}_C$-modules,
     ${\cal E}$ is locally-free on $C$ and
     ${\cal O}_C^{\Azscriptsize} = \Endsheaf{\cal E}\;
                                (:=\Endsheaf_{{\cal O}_C}({\cal E}))$.
 The pair $(C,{\cal O}_C^{\Azscriptsize}) =: C^{\Azscriptsize}$
  is called an {\it Azumaya prestable curve} and
  ${\cal E}$ a {\it fundamental module} on $C^{\Azscriptsize}$.
} \end{sdefinition}

Let $r$ be the rank of ${\cal E}$ as an ${\cal O}_C$-module.
Then, here,
 $(C,{\cal O}_C^{\Azscriptsize})$ takes the role of
   the world-sheet of $r$-stacked D-strings of B-type and
 ${\cal E}$ a Chan-Paton module thereon.
When the notion of morphisms from $(C^{\Azscriptsize},{\cal E})$
 to a space-time $Y$ is correctly defined (cf.\ the next theme),
the open-string-induced Higgsing/un-Higgsing behavior of D-strings
 on $Y$ can be correctly reproduced via deformations of such morphisms.
See also Definition~2.1.6 and Remark~2.1.7 in Sec.~2.1.
This is what justifies the setting of [L-Y2] in the end
 as a beginning step to understand D-``branes" and their moduli
 -- a topic that reveals different characters in different regions
  of the related Wilson's theory-space and remains overall
   very mysterious/challenging on the mathematical side
  despite its appearance in the string-theory literature
   [D-L-P] of Dai-Leigh-Polchinski and [Lei] of Leigh
   already in year 1989.

\bigskip

\begin{flushleft}
{\bf The geometry of Azumaya schemes revealed via morphisms therefrom:
     Surrogates.}
\end{flushleft}
Generalization of Grothendieck's theory of schemes to
 the noncommutative case turns out to be a very demanding task.
To bypass this, we take ``function rings" as more fundamental
 than a true ``space with points and a topology".
However, to address the notion of ``gluing local charts on a space",
 one needs a notion of ``localizations of a ring".
This can be done through the notion of {\it Gabriel filters}
 ${\frak F}$ on a ring.
With applications to D-branes in mind, we restrict ourselves to
 a special class of filters that are associated to
 multiplicatively closed subsets in the center $Z(R)$ of rings $R$.
This gives the notion of {\it central localizations} of a ring.
Thus, we define contravariantly
 a ``{\it space}" as an equivalence class $[{\cal S}]$
  of gluing systems ${\cal S}$ of rings
   and
 a {\it morphism} from $\Space[\cal S]$ to $\Space[\cal R]$
  as an equivalence class of systems of $3$-step ring-system-morphisms
  $$
   {\cal R}\; \stackrel{\Phi^{\prime}}{\longrightarrow}\;
     {\cal S}^{\prime\prime}\;
     \precleftarrow\; {\cal S}^{\prime}\;
     \stackrel{\Phi}{\longrightarrow}\; {\cal S}
  $$
  with ${\cal S}^{\prime\prime}$ being a refinement of
  ${\cal S}^{\prime}$ via central localizations.

In defining a morphism
 $\Space[{\cal S}]\rightarrow \Space[{\cal R}]$ as such,
we have in mind
 \begin{itemize}
  \item[(1)]
   in realizing a morphism from $\Space_1$ to $\Space_2$
    as a system of morphisms on local charts,
   the charts on the domain $\Space_1$ in general needs to
    be refined;

  \item[(2)]
   the composition of morphisms $\,\Space_1\rightarrow \Space_2$,
    $\,\Space_2\rightarrow \Space_3\,$ should be a morphism
    $\Space_1\rightarrow \Space_3$.
 \end{itemize}
This would be redundant in the commutative case
 as in that case, the 3-step ring-system-morphism diagram
 ${\cal R}
   \stackrel{\Phi^{\prime}}{\longrightarrow} {\cal S}^{\prime\prime}
   \precleftarrow {\cal S}^{\prime}
                          \stackrel{\Phi}{\longrightarrow} {\cal S}$
 can always be reduced to a $2$-step diagram
 ${\cal R}
   \stackrel{\Phi}{\longrightarrow} {\cal S}^{\prime\prime\prime}
                                           \precleftarrow {\cal S}$.
This reduction no longer holds in general for the noncommutative case
 in our setting.
Thus, the composability of morphisms enforces us to allow
 a morphism to be defined via a medium system ${\cal S}^{\prime}$.
This is how the notion of {\it surrogates}
 (cf.\ $\Space[{\cal S}^{\prime}]$ in the above $3$-step diagram)
 of $\Space[{\cal S}]$ is enforced to occur through the notion
 of morphisms.
It is {\it a compensation for the insufficiency of refinements
 via central localizations}.
It turns out that
{\it surrogates of $\Space[{\cal S}]$ serve also to reveal
     the subtle geometry hidden in $\Space[{\cal S}]$}.

In particular, when these constructions are applied to
 the gluing system of rings associated to
 an Azumaya-type noncommutative space
 $X^{\nc} := (X, {\cal O}_X,{\cal O}_X^{\nc})$,
 where $(X,{\cal O}_X)$ is a commutative (Noetherian) scheme and
       ${\cal O}_X^{\nc}$ is a coherent sheaf of noncommutative
         ${\cal O}_X$-algebras that contains ${\cal O}_X$
         as $1\cdot {\cal O}_X$ in its center
         ${\cal Z}({\cal O}_X^{\nc})$,
one can/should think of $X^{\nc}$
 as ${\cal O}_X^{\nc}$ together with the system
  $L_{{\cal O}_X^{\tinync}}$ of surrogates of $X^{\nc}$
  given by sub-${\cal O}_X$-algebra pairs:
  $$
   L_{{\cal O}_X^{\tinync}}\;
    =\;\left\{\,
         ({\cal A},{\cal A}^{\nc})\;
         \left|\;
         \begin{array}{l}
          {\cal O}_X \subset {\cal A} \subset {\cal A}^{\nc}
                     \subset {\cal O}_X^{\nc}\,;     \\[.6ex]
          \mbox{${\cal A}$, ${\cal A}^{\nc}\,$:
                            sub-${\cal O}_X$-algebras}\,;\,
          {\cal A} \subset {\cal Z}({\cal A}^{\nc})\,
         \end{array}
         \right.
       \right\}\,.
  $$
Cf.\ [L-Y2: Sec.~1.1].
It is this system of surrogates of $X^{\nc}$
 that reveals the very rich geometry hidden in $X^{\nc}$,
allowing us to understand $X^{\nc}$
 without having to directly deal with the technical issue of
 a functorial construction of $\boldSpec({\cal O}_X^{\nc})$
 as a topological space.
Furthermore, any morphism
 $X^{\prime\nc}:=(\boldSpec{\cal A}, {\cal A}, {\cal A}^{\nc})
                                               \rightarrow Y^{\nc}$
 from a surrogate $X^{\prime\nc}$ of $X^{\nc}$
 should be thought of as defining a morphism
 $X^{\nc}\rightarrow Y^{\nc}$ from $X^{\nc}$ itself.
See [L-Y2: Example 1.1.8 and Sec.~4.1] for the simplest example,
 where $X^{\nc} =$ the Azumaya point
  $(\Spec{\Bbb C}, {\Bbb C}, M_r({\Bbb C}))$,
cf.\ Figure~1-3.
\begin{figure}[htbp]
 \epsfig{figure=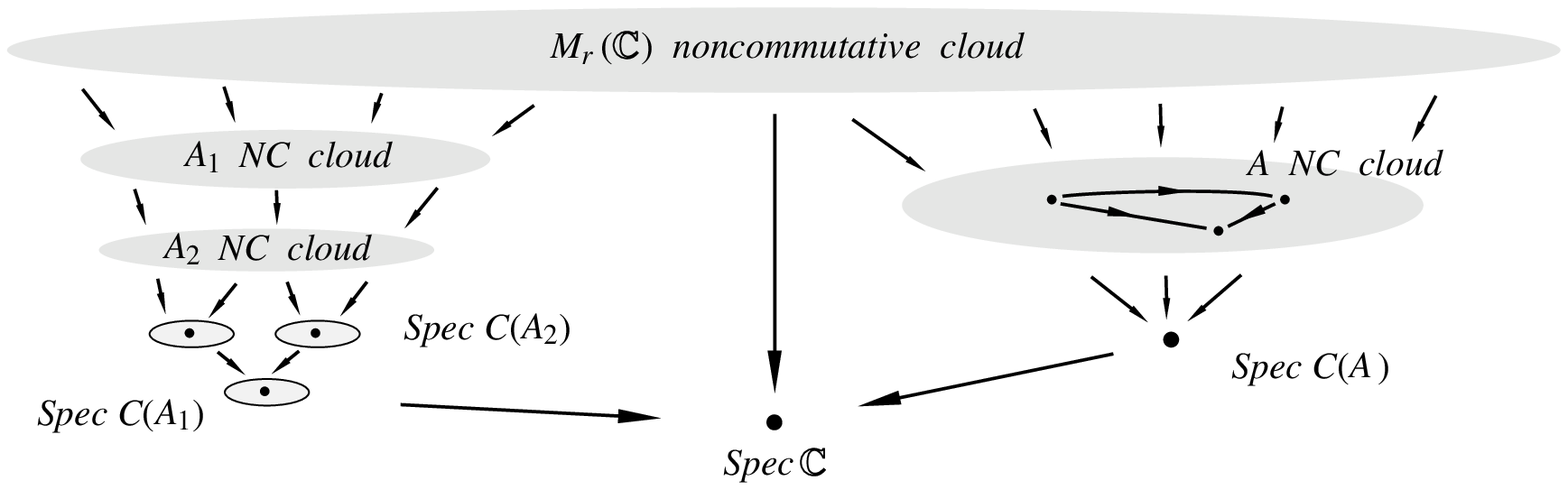,width=16cm}
 \centerline{\parbox{13cm}{\small\baselineskip 12pt
  {\sc Figure} 1-3.
  The very rich geometry of an Azumaya scheme is revealed
   by its surrogates.
  Indicated here is the geometry of an Azumaya point
   $\pt^{\Azscriptsize} := (\Spec{\Bbb C}, {\Bbb C}, M_r({\Bbb C}))$.
  Here, $A_i$ are ${\Bbb C}$-subalgebras of $M_r({\Bbb C})$
    and $C(A_i)$ is the center of $A_i$ with
   $$
     \begin{array}{ccccccc}
      M_r({\Bbb C}) & \supset  & A_1  & \supset  &  A_2
                    &\supset   & \cdots \\
       \cup  && \cup && \cup \\
     {\Bbb C}\cdot {\mathbf 1} & \subset  & C(A_1)
                    & \subset  & C(A_2)   & \subset  & \cdots\,.
     \end{array}
   $$
  From Grothendieck's contravariant equivalence of local geometries
   and function rings, an inclusion pair $R_1\hookrightarrow R_2$
    of ${\Bbb C}$-algebras is equivalent to a dominant morphism
    $\Space R_2\rightarrow \Space R_1$.
  Such a morphism is indicated by a set of $\rightarrow$'s
   in the figure.
  Since, while $M_r({\Bbb C})$ contains only one prime ideal,
    namely $({\mathbf 0})$,
   a ${\Bbb C}$-subalgebra $A\subset M_r({\Bbb C})$
   can have more than one prime ideals,
  it follows that one should think of $\pt^{\Azscriptsize}$
   as containing many secret points hidden in its surrogates.
  By smearing Azumaya points along a scheme,
   one sees also the rich Azumaya geometry in general dimensions.
   }}
\end{figure}

\bigskip

\begin{flushleft}
{\bf D-branes in string theory as morphisms from Azumaya-type nc-space.}
\end{flushleft}
Coherent sheaves on $X^{\nc} = (X,{\cal O}_X, {\cal O}_X^{\nc})$
 are defined to be left ${\cal O}_X^{\nc}$-modules that are coherent
 as ${\cal O}_X$-modules.
The notion of push-forward under a morphism can also be defined.

Once these foundations are laid down,
a (Wick-rotated/Euclidean) D-brane of B-type in an open-string target
 space $Y$ is defined to be
 a morphism $\varphi$ from an Azumaya-type noncommutative space
  $X^{\nc}=(X,{\cal O}_X, {\cal O}_X^{\nc})$ to $Y$
 with the Chan-Paton sheaf the push-forward $\varphi_{\ast}{\cal E}$
  of a fundamental ${\cal O}_X^{\nc}$-module ${\cal E}$ on $X$.

A good match happens: {\it
 The notion of morphisms defined via surrogates},
   which follows from the above mathematical reasonings
   that attempt to extend Grothendieck's language
   of (commutative) schemes to the noncommutative case,
  {\it is exactly what is needed to model/reproduce correctly
   the Higgsing/un-Higgsing behavior of D-branes under deformations.}
It is this miracle that justifies the prototype formulation of
 supersymmetric D-branes of B-type in superstring theory in [L-Y2].
(See [L-Y2: Sec.~4.1] for the simplest example:
  D$0$-branes on the complex line.)
Cf.\ Remark~2.1.7, Figure~2-1-1, and Remark~2.1.16.

For D-branes of A-type and nonsupersymmetric D-branes,
 the philosophy is the same but the language will be different.

\bigskip

When the target $Y$ is a commutative scheme,
 the general formulation of [L-Y2: Sec.~1]
 -- in particular, morphisms and the system of relevant surrogates --
 can be simplified/streamlined.
This is what we will do in Sec.~2.
Such simplification enables us to access the moduli space/stack
 of topological D-strings in $Y$
 along the line of the Polchinski-Grothendieck Ansatz.

\bigskip

\section{Morphisms from Azumaya prestable curves
         with fundamental modules to a projective variety.}

Basic definitions, objects, and properties concerning
 morphisms from Azumaya prestable curves to a projective variety $Y$
 are given in Sec.~2.1.
This follows [L-Y2] and gives a natural noncommutative extension
 of the parallel notions in commutative algebraic geometry (e.g.\ [Ha]).
In the passing, we
 incorporate its use in the description of (Euclidean) D1-branes of
  B-type in superstring theory  and
 bring forth a moduli problem the project is devoted to.
A commutative recast of the noncommutative setting is given in Sec.~2.2.
This recast is technically important and
 it renders the moduli problem more accessible.
Some boundedness properties on the family of morphisms
 from a bounded family of Azumaya prestable curves
  with a fundamental module
 to $Y$ of a fixed combinatorial type are given in Sec.~2.3.
A discussion on presentations of morphisms from
 an Azumaya prestable curve to a projective space is given in Sec.~2.4.

\bigskip

\subsection{Morphisms, their associated surrogate, and
            a prototype description of topological D-strings.}

\begin{flushleft}
{\bf Surrogates, morphisms, and D-strings of B-type.}
\end{flushleft}
\begin{definition} {\bf [(commutative) surrogate].} {\rm
(Cf.\ [L-Y2: Definition 1.1.1 and Definition/Example 1.1.2].)
 Let ${\cal O}_C\subset {\cal A}\subset {\cal O}_C^{\Azscriptsize}$ be
  a commutative ${\cal O}_C$-subalgebra of ${\cal O}_C^{\Azscriptsize}$.
 Then $C_{\cal A}:= \boldSpec {\cal A}$ is called a (commutative)
  {\it surrogate} of $C^{\Azscriptsize}:=(C,{\cal O}_C^{\Azscriptsize})$.
} \end{definition}

\noindent
One should think of $C_{\cal A}$ as a finite scheme over
  and dominating $C$
 that is itself canonically dominated by $C^{\Azscriptsize}$.
An affine cover of $C_{\cal A}$ corresponds to
 a gluing system of algebras from central localizations.
Following this, the notion of morphisms from $C^{\Azscriptsize}$
 to $Y$, as an equivalence class of gluing systems of ring-homomorphisms
  with respect to covers, can be phrased as

\begin{definition} {\bf [morphism].} {\rm
(Cf.\ [L-Y2: Definition 1.1.1].)
 A {\it morphism} from $C^{\Azscriptsize}$ to $Y$,
  in notation $\varphi: C^{\Azscriptsize}\rightarrow Y$,
  is an equivalence class of pairs
  $$
   ({\cal O}_C \subset {\cal A}
               \subset {\cal O}_C^{\Azscriptsize}\;,\;
     f:C_{\cal A}:=\boldSpec{\cal A}\rightarrow Y)\,,
  $$
  where
  \begin{itemize}
   \item[(1)]
    ${\cal A}$ is a commutative ${\cal O}_C$-subalgebra
     of ${\cal O}_C^{\Azscriptsize}$;

   \item[(2)]
    $f:C_{\cal A} \rightarrow Y$
    is a morphism of (commutative) schemes;

   \item[(3)]
    two such pairs
     $({\cal O}_C \subset {\cal A}_1
                \subset {\cal O}_C^{\Azscriptsize}\;,\;
      f_1:C_{{\cal A}_1}\rightarrow Y)$ and
     $({\cal O}_C \subset {\cal A}_2
                \subset {\cal O}_C^{\Azscriptsize}\;,\;
      f_2:C_{{\cal A}_2}\rightarrow Y)$
     are equivalent, in notation
     $$
      ({\cal O}_C \subset {\cal A}_1
                  \subset {\cal O}_C^{\Azscriptsize}\;,\;
       f_1:C_{{\cal A}_1}\rightarrow Y)\;
      \sim\;
      ({\cal O}_C \subset {\cal A}_2
                  \subset {\cal O}_C^{\Azscriptsize}\;,\;
           f_2:C_{{\cal A}_2}\rightarrow Y)\,,
     $$
    if there exists a third pair
     $({\cal O}_C \subset {\cal A}_3
                  \subset {\cal O}_C^{\Azscriptsize}\;,\;
      f_3:C_{{\cal A}_3}\rightarrow Y)$
     such that
      ${\cal A}_3 \subset {\cal A}_i$ and that
      the induced diagram
       \begin{eqnarray*}
       \xymatrix{
        C_{{\cal A}_i}\ar[drr]^{f_i}\ar[d] &&   \\
        C_{{\cal A}_3}\ar[rr]^{f_3}        && Y \\
        }
       \end{eqnarray*}
       commutes, for $i=1,\, 2$.
  \end{itemize}
 To improve clearness, we denote the set of pairs
  associated to $\varphi$ by the bold-faced {\boldmath $\varphi$}.
}\end{definition}

\begin{definition}
{\bf [associated surrogate, canonical presentation, and image].}
{\rm
 Let
  $$
   {\cal A}_{\varphi}\;
    =\; \cap_{({\cal O}_C\subset {\cal A} \subset {\cal O}^{A\!z},
               f:C_{\cal A}\rightarrow Y)
              \in \mbox{\scriptsize\boldmath $\varphi$}}\,
         {\cal A}\,.
  $$
 Then
  ${\cal O}_C\subset {\cal A}_{\varphi}\subset {\cal O}_C^{\Azscriptsize}$
  and there exists a unique
   $f_{\varphi}: C_{\varphi} :=\boldSpec{\cal A}_{\varphi}\rightarrow Y$
  such that the induced diagram
   \begin{eqnarray*}
    \xymatrix{
     C_{\cal A}\ar[drr]^f\ar[d]         &&    \\
     C_{\varphi}\ar[rr]^{f_{\varphi}}   && Y  \\
     }
   \end{eqnarray*}
   commutes,
   for all
    $({\cal O}_C \subset {\cal A} \subset {\cal O}_C^{\Azscriptsize},
      f:C_{\cal A}\rightarrow Y) \in$ {\boldmath $\varphi$}.
 We shall call the pair
  $$
   ({\cal O}_C \subset {\cal A}_{\varphi}
               \subset {\cal O}_C^{\Azscriptsize}\;,\;
     f_{\varphi}:
      C_{\varphi}:= \boldSpec{\cal A}_{\varphi}\rightarrow Y)\,,
  $$
  which is canonically associated to $\varphi$,
  {\it the (canonical) presentation} for $\varphi$.
 The scheme $C_{\varphi}$, which dominates $C$, is called
  the {\it surrogate of $C^{\Azscriptsize}$ associated to $\varphi$}.
 We will denote the built-in morphism $C_{\varphi}\rightarrow C$
  by $\pi_{\varphi}$.
 The subscheme $f_{\varphi}(C_{\varphi})$ of $Y$
  is called the {\it image} of $C^{\Azscriptsize}$ under $\varphi$
  and will be denoted $\Image\varphi$ or $\varphi(C^{\Azscriptsize})$
  interchangeably.
}\end{definition}

\begin{remark} {\it $[$minimal property of $C_{\varphi}$$\,]$.}
{\rm
 By construction,
  \begin{itemize}
   \item[$\cdot$] {\it
    there exists no ${\cal O}_C$-subalgebra
    ${\cal O}_C \subset {\cal A}^{\prime}\subset {\cal A}_{\varphi}$
    such that
     $f_{\varphi}$ factors as the composition of morphisms
     $C_{\varphi}
      \rightarrow \boldSpec{\cal A}^{\prime} \rightarrow Y$}.
  \end{itemize}
 We will call this feature
  the {\it minimal property} of the surrogate
  $C_{\varphi}$ of $C^{\Azscriptsize}$ associated to $\varphi$.
}\end{remark}

\begin{definition}
{\bf [isomorphism between morphisms].}
{\rm
 Two morphisms
    $\varphi_1:(C_1^{\Azscriptsize},{\cal E}_1)\rightarrow Y$ and
    $\varphi_2:(C_2^{\Azscriptsize},{\cal E}_2)\rightarrow Y$
  from Azumaya prestable curves with a fundamental module to $Y$
  are said to be {\it isomorphic}
 if there exists an isomorphism
   $h:C_1 \stackrel{\sim}{\rightarrow} C_2$
   with a lifting
    $\widetilde{h}:
     {\cal E}_1 \stackrel{\sim}{\rightarrow} h^{\ast}{\cal E}_2$
   such that
    \begin{itemize}
     \item[$\cdot$]
      $\widetilde{h}:
       {\cal A}_{\varphi_1}
       \stackrel{\sim}{\rightarrow} h^{\ast}{\cal A}_{\varphi_2}$,

     \item[$\cdot$]
      the following diagram commutes
      \begin{eqnarray*}
       \xymatrix{
         C_{\varphi_2}\ar[drr]^{f_{\varphi_2}}\ar[d]_{\widehat{h}}
                                                          &&      \\
         C_{\varphi_1}\ar[rr]^{f_{\varphi_1}}             && Y\; .\\
       }
      \end{eqnarray*}
    \end{itemize}
  Here,
  we denote the induced isomorphism
    ${\cal O}_{C_1}^{\Azscriptsize} \stackrel{\sim}{\rightarrow}
                             h^{\ast}{\cal O}_{C_2}^{\Azscriptsize}$
    of ${\cal O}_{C_1}$-algebras
    (or ${\cal A}_1 \stackrel{\sim}{\rightarrow} h^{\ast}{\cal A}_2$
        of their respective
        ${\cal O}_{C_{\bullet}}$-subalgebras in question)
    via
     $\widetilde{h}:{\cal E}_1
       \stackrel{\sim}{\rightarrow} h^{\ast}{\cal E}_2$
   still by $\widetilde{h}$ and
  $\widehat{h}:C_{\varphi_2}\stackrel{\sim}{\rightarrow} C_{\varphi_1}$
   is the scheme-isomorphism associated to
  $\widetilde{h}: {\cal A}_{\varphi_1}
    \stackrel{\sim}{\rightarrow} h^{\ast}{\cal A}_{\varphi_2}$.
} \end{definition}

\begin{definition}
{\bf [Chan-Paton module].}
{\rm
 Given a morphism
  $\varphi:
    C^{\Azscriptsize}=(C,{\cal O}_C^{\Azscriptsize}) \rightarrow Y$
  with its canonical presentation
  $({\cal O}_C \subset {\cal A}_{\varphi}
               \subset {\cal O}_C^{\Azscriptsize}\;,\;
     f_{\varphi}:C_{\varphi}\rightarrow Y)$.
 Let ${\cal E}$ be a fundamental ${\cal O}_C^{\Azscriptsize}$-module
  on $C^{\Azscriptsize}$.
 Then ${\cal E}$ is automatically
  a ${\cal O}_{C_{\varphi}}$-module\footnote{By
                                   construction, it is automatically
                                    a left ${\cal O}_{C_{\varphi}}$-module.
                                   We then set the right
                                    ${\cal O}_{C_{\varphi}}$-action on
                                    ${\cal E}$ the same as the left
                                    ${\cal O}_{C_{\varphi}}$-action.},
  in notation, $_{{\cal O}_{C_{\varphi}}}{\cal E}$.
 Define the {\it push-forward} $\varphi_{\ast}{\cal E}$
  of ${\cal E}$ to $Y$ under $\varphi$ by
  $f_{\varphi\,\ast}(_{{\cal O}_{C_{\varphi}}}{\cal E})$.
 It is a coherent ${\cal O}_Y$-module
  supported on
  $\Image\varphi = f_{\varphi}(C_{\varphi})$.
 We will call it also the {\it Chan-Paton module on
  $\varphi(C^{\Azscriptsize})$ associated to ${\cal E}$
  under $\varphi$}.
} \end{definition}

\begin{remark}
{\it $[$topological D-string$\,]$.}
{\rm The morphism
 $\varphi: (C^{\Azscriptsize},{\cal E})\rightarrow Y$
  serves as our prototype-definition for the notion of
  {\it topological D-strings on $Y$} in superstring theory,
  cf.\ [L-Y2: Definition 2.2.3].
 When $Y$ is a target-space for open superstrings,
 as $\varphi$ varies, the endomorphism sheaf
  $\Endsheaf_{{\cal O}_{C_{\varphi}}}(_{{\cal O}_{C_{\varphi}}}{\cal E})$
  on $C_{\varphi}$ also varies.
 This ${\cal O}_{C_{\varphi}}$-module carries the information of
  the gauge group on the D-brane as observed/detected by open strings
  in $Y$.
 This is how the Higgsing/un-Higgsing behavior of the gauge theory
  on the Wick-rotated D-string world-sheet is revealed
  in the current setting
  through deformations of morphisms from Azumaya curves to $Y$.
 The case of D0-branes ([L-Y: Sec.~4]) is indicated in Figure~2-1-1.
 (See also the last theme of this subsection.)
}\end{remark}

\begin{figure}[htbp]
 \epsfig{figure=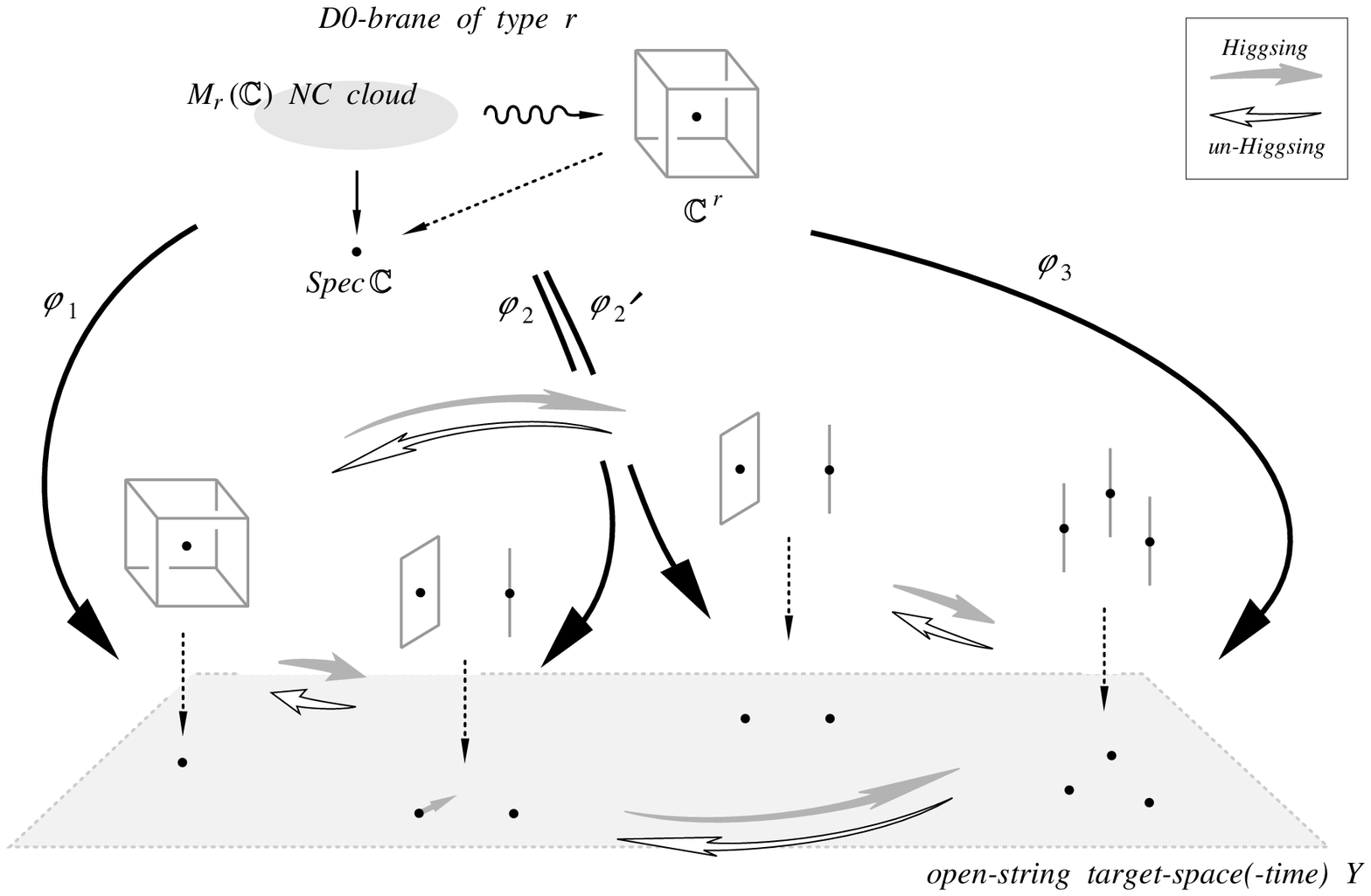,width=16cm}
 \centerline{\parbox{13cm}{\small\baselineskip 12pt
  {\sc Figure} 2-1-1.
  The Higgsing and un-Higgsing of D-branes in $Y$ via deformations of
   morphisms $\varphi$ from $(X^{A\!z},{\cal E})$ to $Y$.
  Here, a module over a scheme is indicated by $\dasharrow$.
  }}
\end{figure}

\bigskip

\begin{flushleft}
{\bf Basic properties of the surrogate $C_{\varphi}$
     associated to $\varphi:C^{\Azscriptsize}\rightarrow Y$.}
\end{flushleft}
The following lemma realizes the surrogate $C_{\varphi}$ of
 $C^{\Azscriptsize}$ associated to
 $\varphi:C^{\Azscriptsize}\rightarrow Y$
 as the graph of a multi-valued morphism from $C$ to $Y$;
it is a direct consequence of the minimal property of $C_{\varphi}$,
 cf.\ Remark~2.1.4:

\begin{lemma-definition}
{\bf [$C_{\varphi}$ as graph of morphism].}
{\it
 With notations from above,
 $C_{\varphi}$ is canonically isomorphic to a subscheme
  $\Gamma_{\varphi}$ of $C\times Y$
  such that
   $\pi_{\varphi}:C_{\varphi}\rightarrow C$ and
   $f_{\varphi}:C_{\varphi}\rightarrow Y$
   are given respectively by
    $\pr_1:\Gamma_{\varphi}\rightarrow C$ and
    $\pr_2:\Gamma_{\varphi}\rightarrow Y$,
   where $\pr_1$ and $\pr_2$ are the projection maps from
    $C\times Y$ to its factors $C$ and $Y$ respectively.
 In particular, $C_{\varphi}$ is projective.}
{\rm
 We shall call $\Gamma_{\varphi}\subset C\times Y$
  the {\it graph} of $\varphi:C^{\Azscriptsize}\rightarrow Y$.
} \end{lemma-definition}

\begin{lemma} {\bf [no embedded point].}
 A surrogate $\boldSpec{\cal A}$ of $C^{\Azscriptsize}$,
  in particular $C_{\varphi}$, does not have embedded points.
\end{lemma}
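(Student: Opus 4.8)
The plan is to work locally on an affine chart of $C$ and reduce the statement to a purely algebraic claim about the subalgebra $\mathcal{A}$ sitting inside the endomorphism algebra of a locally free module. First I would note that, since $C$ is a nodal curve, it is reduced, pure of dimension $1$, and Cohen--Macaulay; in particular $\mathcal{O}_C$ has no embedded primes. The morphism $\boldSpec\mathcal{A}\to C$ is finite, so $\boldSpec\mathcal{A}$ is also of pure dimension $1$, and it suffices to show that every associated prime of $\mathcal{A}$ (as an $\mathcal{O}_C$-algebra, equivalently as a sheaf of $\mathcal{O}_C$-modules) has codimension $0$, i.e.\ is a minimal prime. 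Over a small affine $\Spec R\subset C$ we have $\mathcal{E}$ corresponding to a finite locally free $R$-module $E$ of rank $r$, $\mathcal{O}_C^{\Azscriptsize}$ to $\End_R(E)\cong M_r(R)$ locally, and $\mathcal{A}$ to a commutative $R$-subalgebra $A$ with $R\subset A\subset \End_R(E)$.

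The key observation is that $A$, being an $R$-submodule of $\End_R(E)$ and the latter being a locally free (hence torsion-free, and in fact reflexive) $R$-module, is itself torsion-free as an $R$-module. More precisely, $A$ has no $R$-torsion: if $x\in R$ is a nonzerodivisor and $a\in A$ with $xa=0$ in $\End_R(E)$, then $a=0$ because $\End_R(E)$ is $R$-torsion-free. Now the critical step: any associated prime $\mathfrak{p}$ of $A$ as an $R$-module is an associated prime of $\End_R(E)$ as an $R$-module --- indeed, $A\hookrightarrow \End_R(E)$ is an injection of $R$-modules, so $\operatorname{Ass}_R(A)\subseteq \operatorname{Ass}_R(\End_R(E))$. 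Since $\End_R(E)$ is locally free over $R$, $\operatorname{Ass}_R(\End_R(E))=\operatorname{Ass}_R(R)=\operatorname{Min}(R)$, which consists only of minimal primes (here using that $R$ is reduced and equidimensional, as $C$ is a nodal curve). Therefore $\operatorname{Ass}_R(A)$ contains no embedded primes \emph{relative to $R$}.

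Finally I must translate "no embedded prime of $A$ relative to $R$" into "no embedded point of $\boldSpec\mathcal{A}$", i.e.\ no embedded associated prime of $A$ as a ring. The point is that the associated primes of $A$ as an $A$-module are exactly the contractions to primes of $A$ of the associated primes of $A$ as an $R$-module, and the finite morphism $\Spec A\to\Spec R$ carries a prime $\mathfrak{q}\subset A$ to its contraction $\mathfrak{q}\cap R$, with $\dim(A/\mathfrak{q}) = \dim(R/(\mathfrak{q}\cap R))$ by finiteness (going-up / incomparability). Since every $\mathfrak{q}\in\operatorname{Ass}_A(A)$ contracts to a minimal prime of $R$, it has $\dim(A/\mathfrak{q})=1=\dim A$, hence $\mathfrak{q}$ is itself a minimal prime of $A$; so $A$ has no embedded primes, which is exactly the assertion that $\boldSpec\mathcal{A}$ has no embedded points. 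Globalizing over an affine cover of $C$ finishes the proof. The main obstacle I anticipate is the bookkeeping in the last step --- relating $\operatorname{Ass}_A(A)$ to $\operatorname{Ass}_R(A)$ and confirming that a finite extension cannot create an embedded prime over a Cohen--Macaulay (indeed reduced equidimensional) base --- but this is a standard commutative-algebra fact (e.g.\ via the characterization of embedded primes by depth, or Serre's $S_1$ criterion, noting that a finite module over a ring is $S_1$ iff it has no embedded associated primes, and $S_1$ of $A$ over itself follows from $S_1$ of $A$ as an $R$-module which in turn follows from $A\hookrightarrow\End_R(E)$ being an $R$-submodule of a locally free module over the $S_1$ ring $R$).
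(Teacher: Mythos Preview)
Your proof is correct and follows essentially the same approach as the paper: the key observation in both is that $\mathcal{A}$, being an $\mathcal{O}_C$-submodule of the locally free sheaf $\mathcal{O}_C^{\Azscriptsize}=\Endsheaf\mathcal{E}$, is torsion-free as an $\mathcal{O}_C$-module, and hence has no embedded primes over the one-dimensional Cohen--Macaulay base $C$. The paper states this in one sentence and leaves the passage from ``torsion-free over $\mathcal{O}_C$'' to ``no embedded points of $\boldSpec\mathcal{A}$'' implicit, whereas you have carefully spelled out the commutative-algebra bookkeeping (relating $\operatorname{Ass}_R(A)$ to $\operatorname{Ass}_A(A)$ via the finite map); your one awkwardly phrased sentence about ``contractions to primes of $A$'' should read that primes in $\operatorname{Ass}_A(A)$ contract to primes in $\operatorname{Ass}_R(A)$, but the argument you actually run afterward uses exactly this correct direction.
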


\begin{proof}
 This follows from the fact that ${\cal A}$ is
  an ${\cal O}_C$-subalgebra of ${\cal O}_C^{\Azscriptsize}$
 and the latter is locally-free -- and hence torsion-free --
  as an ${\cal O}_C$-module.

\end{proof}

\noindent
Thus, $C_{\varphi}$ can be visualized as a twisting-around
 of not necessarily connected and in general non-reduced
 $0$-dimensional schemes over $C$
 that is flat over the smooth locus of $C$.

In more detail\footnote{This
                        applies to general surrogates.
                        Here we focus on $C_{\varphi}$.},
let
 $\nu:\widetilde{C}\rightarrow C$ be the normalization of $C$
  and
 $$
  0\;\longrightarrow\;
      (\nu^{\ast}{\cal A}_{\varphi})_{\torsionscriptsize}\;
     \stackrel{\imath}{\longrightarrow}\;
      \nu^{\ast}{\cal A}_{\varphi}\;
     \stackrel{\jmath}{\longrightarrow}\;
      \overline{\nu^{\ast}{\cal A}_{\varphi}}
     \longrightarrow\; 0\,,
 $$
   where $(\nu^{\ast}{\cal A}_{\varphi})_{\torsionscriptsize}$
    is the torsion subsheaf of $\nu^{\ast}{\cal A}_{\varphi}$,
  and
 $$
   {\cal A}_{\varphi}\; \stackrel{k}{\hookrightarrow}\;
    \nu_{\ast}\overline{\nu^{\ast}{\cal A}_{\varphi}}
 $$
 be the canonical exact sequence (resp.\ inclusion) of
  ${\cal O}_{\widetilde{C}}$- (resp.\ ${\cal O}_C$-)modules.
As ${\cal A}_{\varphi}$ is an ${\cal O}_C$-algebra,
 both $\nu^{\ast}{\cal A}_{\varphi}$
  and $\overline{\nu^{\ast}{\cal A}_{\varphi}}$
 are ${\cal O}_{\widetilde{C}}$-algebra,
 with
  $\boldSpec\nu^{\ast}{\cal A}_{\varphi}
                      =\widetilde{C}\times_C C_{\varphi}$,
  $\imath$ an inclusion of an ideal sheaf on
   $\widetilde{C}\times_C C_{\varphi}$,  and
  $\jmath$ is the ${\cal O}_{\widetilde{C}}$-algebra quotient
   that corresponds to the closed subscheme
  $\widehat{C_{\varphi}}
   :=\boldSpec\overline{\nu^{\ast}{\cal A}_{\varphi}}$
   of $\widetilde{C}\times_C C_{\varphi}$
   obtained by removing all the embedded points from the latter.
By construction,
 $\widehat{C_{\varphi}}$ is flat over $\widetilde{C}$.
The ${\cal O}_C$-algebra inclusion $k$ then implies that
 the morphism
  $\widehat{\nu}:\widehat{C_{\varphi}} \rightarrow C_{\varphi}$
  from the composition
   $\widehat{C_{\varphi}}\hookrightarrow
    \widetilde{C}\times_C C_{\varphi}\rightarrow C_{\varphi}$
  is surjective.
In summary,
 \begin{eqnarray*}
 \xymatrix{
   \widehat{C_{\varphi}}\ar[rr]^{\widehat{\nu}}
                        \ar[d]_{\widehat{\pi_{\varphi}}}
                              ^{\mbox{\scriptsize flat}}
    && C_{\varphi}\ar[d]^{\pi_{\varphi}}       \\
   \widetilde{C}\ar[rr]^{\nu}        &&  C
  }
 \end{eqnarray*}
 where all the arrows/morphisms are surjective.
Furthermore, as ${\cal A}_{\varphi}$ embeds in
 a locally-free ${\cal O}_C$-module,
 for $q$ a node of $C$, let $\nu^{-1}(q)=\{q_-, q_+\}$;
then,
 $$
  (\widehat{\nu}(\widehat{\pi_{\varphi}}^{-1}(q_-)))_{\redscriptsize}\;
  =\;(\pi_{\varphi}^{-1}(q))_{\redscriptsize}\;
  =\;(\widehat{\nu}(\widehat{\pi_{\varphi}}^{-1}(q_+)))_{\redscriptsize}\,.
 $$
Cf.\ Figure~2-1-2.
\begin{figure}[htbp]
 \epsfig{figure=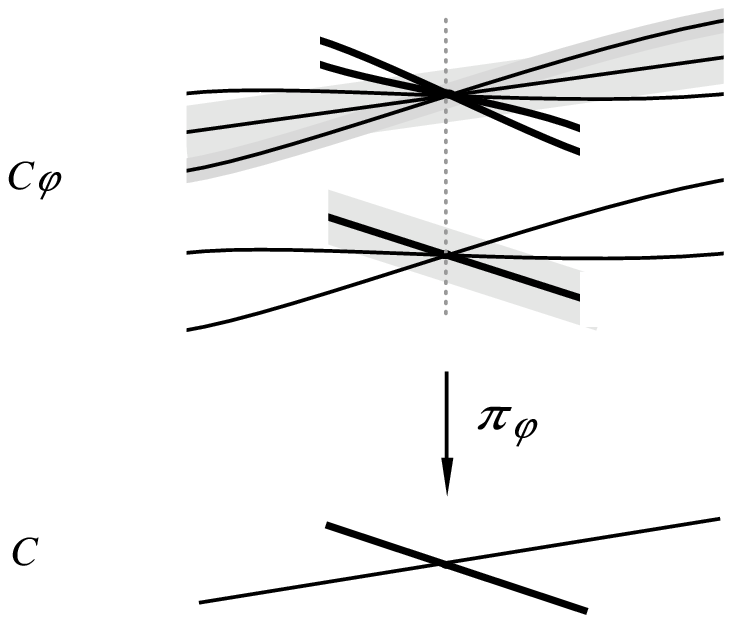,width=16cm}
 \centerline{\parbox{13cm}{\small\baselineskip 12pt
  {\sc Figure} 2-1-2.
  While $C_{\varphi}$ is in general flat only over $C_{\smoothtiny}$,
  the reduced structure of $C_{\varphi}$ over the two branches
   around a node of $C$ has to coincide over that node.
  }}
\end{figure}

Finally, we compare $C_{\varphi}$ in this picture
 with the related twisting-around of (commutative) surrogates
  of the Azumaya ${\Bbb C}$-point
  $\Space M_r({\Bbb C})=(\Spec {\Bbb C}, M_r({\Bbb C}))$
  over $C$.
The inclusion ${\cal A}\hookrightarrow {\cal O}_C^{\Azscriptsize}$
 induces a ${\Bbb C}$-algebra homomorphism
 ${\cal A}\otimes_{{\cal O}_C}\kappa_x\stackrel{\jmath_x}{\rightarrow}
  {\cal O}_C^{\Azscriptsize}\otimes_{{\cal O}_C}\kappa_x
  \simeq M_k({\Bbb C})$, where $\kappa_x$ is the residue field of
  $x\in C$.

\begin{lemma} {\bf [kernel of $\jmath_x$].}
 $\Ker\jmath_x$
  is contained in the nilradical
   $\Nil({\cal A}\otimes_{{\cal O}_C}\kappa_x)$
   of ${\cal A}\otimes_{{\cal O}_C}\kappa_x$  and
 is non-zero only for finitely many $x$ on $C$.
\end{lemma}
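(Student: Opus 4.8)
\smallskip
\noindent\emph{Proof strategy.}
The assertion is local at $x$, so I would first trivialize: since ${\cal E}$ is locally free, choose ${\cal E}_x\cong{\cal O}_{C,x}^{\oplus k}$; under it ${\cal O}_C^{\Azscriptsize}$ becomes $M_k({\cal O}_{C,x})$ in a neighborhood of $x$, the fiber ${\cal O}_C^{\Azscriptsize}\otimes_{{\cal O}_C}\kappa_x$ becomes $M_k({\Bbb C})$, and $\jmath_x$ is induced by the entrywise reduction $M_k({\cal O}_{C,x})\to M_k({\Bbb C})$ restricted to ${\cal A}_x$. The two claims are then handled independently.

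For nilpotency of $\Ker\jmath_x$, the plan is to invoke the Cayley--Hamilton theorem. Given $a\in\Ker\jmath_x$, lift it to $\widetilde a\in{\cal A}_x\subset M_k({\cal O}_{C,x})$. Since $\jmath_x(a)=0$, all entries of $\widetilde a$ lie in $\mathfrak m_x$, so the characteristic polynomial $\chi(T)=T^k+b_1T^{k-1}+\cdots+b_k$ of $\widetilde a$ has $b_1,\dots,b_k\in\mathfrak m_x$ (each $b_i$ being a polynomial without constant term in the entries of $\widetilde a$). Cayley--Hamilton gives $\widetilde a^{\,k}+b_1\widetilde a^{\,k-1}+\cdots+b_k\mathbf 1=0$ in $M_k({\cal O}_{C,x})$. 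The key point is that this identity already lives in ${\cal A}_x$: the powers $\widetilde a^{\,j}$ lie in ${\cal A}_x$ because ${\cal A}$ is a subalgebra, and each $b_i\mathbf 1$ lies in ${\cal A}_x$ because ${\cal O}_C\cdot\mathbf 1\subset{\cal A}$. Reducing modulo $\mathfrak m_x{\cal A}_x$ annihilates every term carrying a factor $b_i\in\mathfrak m_x$ and leaves $a^k=0$ in ${\cal A}\otimes_{{\cal O}_C}\kappa_x$; hence $a\in\Nil({\cal A}\otimes_{{\cal O}_C}\kappa_x)$.

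For the finiteness statement, I would exploit that ${\cal A}$, being a coherent subsheaf of the locally free sheaf ${\cal O}_C^{\Azscriptsize}$, is ${\cal O}_C$-torsion-free and hence locally free over the smooth locus $C_{\smoothtiny}$ of the nodal curve $C$. Over $C_{\smoothtiny}$, $\jmath:{\cal A}\hookrightarrow{\cal O}_C^{\Azscriptsize}$ is then a morphism of vector bundles, and at each generic point $\eta$ of $C$ it specializes, localization at the field ${\cal O}_{C,\eta}$ being exact, to the injection ${\cal A}\otimes\kappa(\eta)\hookrightarrow M_k(\kappa(\eta))$; thus $\jmath_x$ has full rank $\rank_{{\cal O}_C}{\cal A}$ at the generic points of $C_{\smoothtiny}$. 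By lower semicontinuity of the rank of a bundle map, the locus $\{\,x\in C_{\smoothtiny} : \Ker\jmath_x\neq 0\,\}$ is a proper closed subset of $C_{\smoothtiny}$, hence finite; together with the finitely many nodes of $C$ this produces a finite set outside of which $\Ker\jmath_x=0$.

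I do not expect a genuine obstacle here. The only thing demanding care is the bookkeeping: verifying that the Cayley--Hamilton relation really descends from $M_k({\cal O}_{C,x})$ to the algebra ${\cal A}_x$ (which is where the hypothesis that ${\cal A}$ contains ${\cal O}_C$ and is a subalgebra is genuinely used, since ${\cal A}$ need not be ${\cal O}_C$-flat), and carrying out the rank comparison component by component on a possibly reducible $C$ while remembering to adjoin the nodes to the exceptional set by hand.
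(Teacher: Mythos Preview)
Your argument is correct on both counts. The Cayley--Hamilton step is clean, and you are right that the identity descends to ${\cal A}_x$ precisely because ${\cal O}_C\cdot\mathbf 1\subset{\cal A}$ and ${\cal A}$ is closed under products; the finiteness via rank semicontinuity of a bundle map over $C_{\smoothtiny}$ is also fine.

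The paper takes a different route for the nilpotency. Rather than Cayley--Hamilton, it argues through the fundamental module: ${\cal E}$, viewed as an ${\cal A}$-module on $C_{\cal A}=\boldSpec{\cal A}$, has annihilator zero (since ${\cal A}\hookrightarrow\Endsheaf{\cal E}$), so $\Supp(_{{\cal O}_{C_{\cal A}}}{\cal E})=C_{\cal A}$ set-theoretically. Restricting to the fiber $\pi^{-1}(x)=\Spec({\cal A}\otimes_{{\cal O}_C}\kappa_x)$ and using that $\Ker\jmath_x$ is exactly the annihilator of ${\cal E}_x$ as a module over ${\cal A}\otimes\kappa_x$, one concludes that $V(\Ker\jmath_x)=\pi^{-1}(x)$ as sets, i.e.\ $\Ker\jmath_x$ is nilpotent. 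The finiteness is extracted from the same picture: generically the support of the fiber equals the fiber scheme-theoretically, forcing $\Ker\jmath_x=0$ on a dense open.

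Your approach is more elementary and self-contained (no auxiliary module, just matrix algebra), and it gives an explicit nilpotency bound $a^k=0$. The paper's approach is more geometric and ties the lemma directly into the surrounding picture of $_{{\cal O}_{C_{\varphi}}}{\cal E}$ as a flat family of length-$r$ sheaves over $C$, which is the language used immediately afterward for the combinatorial type. Either is fine here; for this isolated statement yours is arguably the quicker path.
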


\begin{proof}
 Let ${\cal E}$ be a fundamental module on $C^{\Azscriptsize}$
  with ${\cal O}_C^{\Azscriptsize}=\Endsheaf{\cal E}$.
 As ${\cal E}={\pi_{\varphi}}_{\ast}(_{{\cal O}_{C_{\varphi}}}{\cal E})$
  is locally-free of rank $r$,
 $_{{\cal O}_{C_{\varphi}}}{\cal E}$ on $C_{\varphi}/C$
  forms a flat family of $0$-dimensional sheaves of length $r$ over $C$.
 As ${\cal A}_{\varphi}$ is now a subsheaf of $\Endsheaf{{\cal E}}$,
  $\Supp( (_{{\cal O}_{C_{\varphi}}}{\cal E})|_{\pi_{\varphi}^{-1}(x)} )
      = \pi_{\varphi}^{-1}(x)$
  over a dense open subset of $C$.
 This implies that
  $\Supp( (_{{\cal O}_{C_{\varphi}}}{\cal E})|_{\pi_{\varphi}^{-1}(x)} )
    \subset \pi_{\varphi}^{-1}(x)$
  with
  $(\Supp(
     (_{{\cal O}_{C_{\varphi}}}{\cal E})|_{\pi_{\varphi}^{-1}(x)} ))
                                                   _{\redscriptsize}
   = (\pi_{\varphi}^{-1}(x))_{\redscriptsize}$
  for all $x\in C$.
 The lemma now follows.

\end{proof}

The current theme helps us to understand part of the geometry
  behind the notion of ``combinatorial type" of $\varphi$
  in the next theme.
Lemma~2.1.10  
 indicates a general non-flat ``lower-semicontinuous-like"
 dropping behavior of surrogates in a family,
cf.\ Remark~2.1.16.

\bigskip

\begin{flushleft}
{\bf The combinatorial type of a morphism
     $\varphi:(C^{\Azscriptsize},{\cal E}) \rightarrow Y$.}
\end{flushleft}
To define a notion of ``combinatorial type" that
 fits our moduli problem,
 generalizes the situations in several related moduli problems
   from commutative geometry,  and
 captures the features of D-branes in superstring theory,
we have to bring in fundamental modules ${\cal E}$,
 i.e.\ Chan-Paton modules, as well.
The triple $(g,r,\chi)$,
 where $g$ is the (arithmetic) genus of $C$ and
       $r$ and $\chi$ are respectively
        the rank and the Euler characteristic of ${\cal E}$,
 is the standard combinatorial type data for the domain data
  $(C^{\Azscriptsize},{\cal E})$.
It remains to define the notion of ``image curve class"
 for $\varphi:(C^{\Azscriptsize},{\cal E})\rightarrow Y$
 in any of $A_1(Y)$, $N_1(Y)$, and $H_2(Y;{\Bbb Z})$.

\begin{definition}
{\bf [generic length].} {\rm
 Let $C^{\prime}$ be an irreducible component of $C_{\varphi}$.
 For a closed point of $x^{\prime}\in C^{\prime}$,
 define
  $l(x^{\prime})
   = \length((_{{\cal O}_{C_{\varphi}}}{\cal E})|_{\widehat{x^{\prime}}})$
  ($= H^0((_{{\cal O}_{C_{\varphi}}}{\cal E})|_{\widehat{x^{\prime}}})$),
  where
   $\widehat{x^{\prime}}$ is the connected component of
   $\pi_{\varphi}^{-1}(\pi_{\varphi}(x^{\prime}))$
   that contains $x^{\prime}$.
 There exists an open dense subset $U^{\prime}\subset C^{\prime}$
  such that $l$ is constant on the set of all closed points
  in $U^{\prime}$.
 Define the {\it generic length} of
  $_{{\cal O}_{C_{\varphi}}}{\cal E}$ {\it on $C^{\prime}$}
  to be this constant.
}\end{definition}

\noindent
Cf.\ Figure~2-1-3.
\begin{figure}[htbp]
 \epsfig{figure=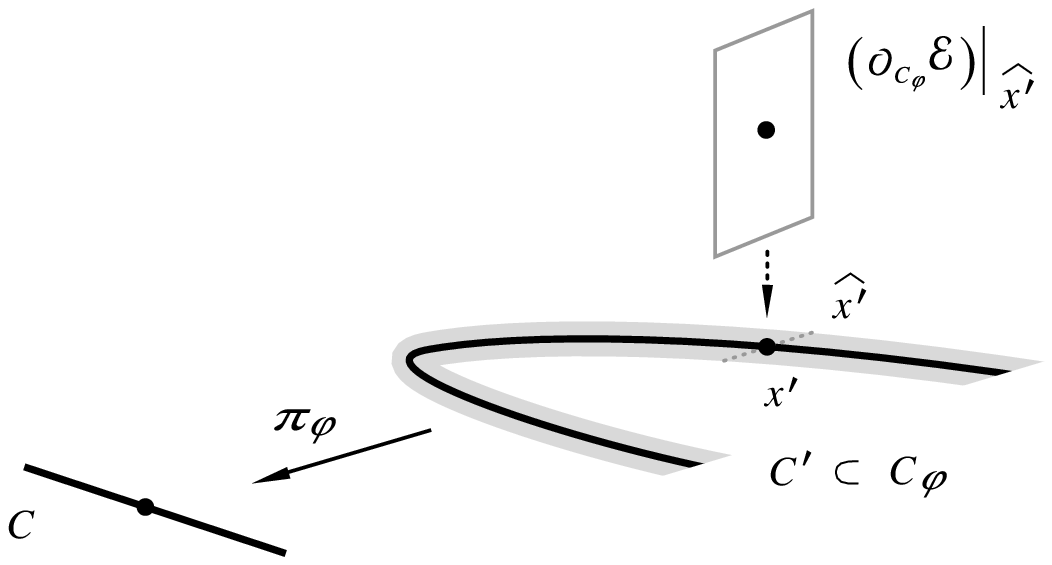,width=16cm}
 \centerline{\parbox{13cm}{\small\baselineskip 12pt
  {\sc Figure} 2-1-3.
  The length function $l(\,\bullet\,)$
   of $_{{\cal O}_{C_{\varphi}}}{\cal E}$ on $(C_{\varphi})_{\Bbb C}$.
  Here, a module over a scheme is indicated by $\dasharrow$.
  }}
\end{figure}

\begin{definition}
{\bf [image curve class].} {\rm
 Given a morphism $\varphi:(C^{\Azscriptsize},{\cal E})\rightarrow Y$,
 let
   $C_{\varphi}=\cup_iC^{\prime}_i$
    be the decomposition of $C_{\varphi}$ by irreducible components and
   $l_i$ be the generic length of $_{{\cal O}_{C_{\varphi}}}{\cal E}$
    on $C^{\prime}_i$.
 Then the {\it image curve class}, in notation $\varphi_{\ast}[C]$,
  in $A_1(Y)$ (similarly, in $N_1(Y)$ or $H_2(Y;{\Bbb Z})$)
  is defined by
   $$
    \varphi_{\ast}[C]\; :=\;
     \sum_i\,
       l_i\,\cdot\,
        (f_{\varphi})_{\ast}[(C^{\prime}_i)_{\redscriptsize}]\;
     \in\; A_1(Y)\,.
   $$
}\end{definition}

\begin{definition}
{\bf [combinatorial type].} {\rm
 The tuple from the above discussion, defined by
  $(g,r,\chi\,|\,\beta)
   := (g(C), \rank({\cal E}), \chi({\cal E})\,|\,\varphi_{\ast}[C])$,
  is called the {\it combinatorial type}
  of $\varphi:(C^{\Azscriptsize},{\cal E})\rightarrow Y$.
}\end{definition}

\begin{remark} {\rm
 (1)
 The notion of image curve class for a D-string world-sheet
  under a morphism has to go through the length function
  in Definition~2.1.11
  due to the complication that in general a coherent
   ${\cal O}_{X^{\prime}}$-module ${\cal E}^{\prime}$
   on a nonreduced scheme $X^{\prime}$ may not admit
   an open dense subscheme over which ${\cal E}^{\prime}$
   is locally free.
 Such situation can indeed happen
  for $_{{\cal O}_{C_{\varphi}}}{\cal E}$ on $C_{\varphi}$.

 (2)
 When $_{{\cal O}_{C_{\varphi}}}{\cal E}$
  is locally free on an open dense subset of $C_{\varphi}$,
  then the rank of such restrictions define the rank $r_i$
  of $_{{\cal O}_{C_{\varphi}}}{\cal E}$
  on each irreducible component $C^{\prime}_i$ of $C_{\varphi}$.
 In this case,
  the generic length $l_i$ of $_{{\cal O}_{C_{\varphi}}}{\cal E}$
   on an irreducible component $C^{\prime}_i$ of $C_{\varphi}$
   is given by $m_i\cdot r_i$,
    where $m_i$ is the multiplicity of $C^{\prime}_i$
    in the sense of [Fu],
   and
  $\varphi_{\ast}[C] = \sum_i\, m_ir_i\, \cdot\,
       (f_{\varphi})_{\ast}[(C^{\prime}_i)_{\redscriptsize}]$.
 This form fits into superstring-theory setting directly.

 (3)
 For morphisms to ${\Bbb P}^k$ such that Item (2) above applies,
  let $d_i$ be the degree of $f_{\varphi}|_{C^{\prime}_i}$.
 Then $\varphi_{\ast}[C]$ has degree $\sum_i r_id_i=rd$
  for some $d\in {\Bbb Z}_{\ge 0}$.
 Here, $r=\rank({\cal E})$.
 This illustrates that
  the notion of image curve class of D-string world-sheet in
   a target $Y$ involves not only the $1$-cycle
   that supports the D-string world-sheet
  but also the Chan-Paton module on the D-string world-sheet.
}\end{remark}

\bigskip

\begin{flushleft}
{\bf Morphisms over a (commutative) base scheme$/{\Bbb C}$.}
\end{flushleft}
The notion of a morphism from an Azumaya prestable curve to $Y$,
 as given in Definition~2.1.2,
 works for a general Azumaya scheme
 $X^{\Azscriptsize} := (X,{\cal O}_X^{\Azscriptsize})$,
  where
   $X=(X,{\cal O}_X)$ is a (commutative, Noetherian) scheme/${\Bbb C}$
    and
   ${\cal O}_X^{\Azscriptsize}
    =\Endsheaf({\cal E}):= \Endsheaf_{{\cal O}_X}({\cal E})$
    for a locally-free coherent ${\cal O}_X$-module ${\cal E}$ on $X$.
One can thus extend the notion of morphism to
 that of ``morphism over a base scheme" as follows:

\begin{definition} {\bf [morphism over a base and its pull-back].}
{\rm
 Let $S$ be a (commutative Noetherian) scheme over ${\Bbb C}$.
 A {\it family of morphisms over $S$} from Azumaya prestable curves
  with a fundamental module to $Y$,
  in notation
   $\Phi:=\Phi_S:({\cal C}^{\Azscriptsize}_S,{\cal E}_S)/S \rightarrow Y$,
  consists of the following data:
  \begin{itemize}
   \item[(1)]
    A flat family
     $({\cal C}^{\Azscriptsize}_S,{\cal E}_S)/S
      :=({\cal C}_S,{\cal O}_{{\cal C}_S}^{\Azscriptsize},
         {\cal E}_S)/S$,
      where ${\cal O}_{{\cal C}_S}^{\Azscriptsize}
             =\Endsheaf_{{\cal O}_{{\cal C}_S}}({\cal E}_S)$,
     of Azumaya prestable curves with a fundamental module..

   \item[(2)]
    An equivalence class of pairs
     $$
      ({\cal O}_{{\cal C}_S} \subset {\cal A}
                \subset {\cal O}_{{\cal C}_S}^{\Azscriptsize}\;,\;
        f:{\cal C}_{\cal A}:=\boldSpec{\cal A}\rightarrow Y)\,,
     $$
     where
     \begin{itemize}
      \item[(2.1)]
       ${\cal A}$ is a commutative ${\cal O}_{{\cal C}_S}$-subalgebra
        of ${\cal O}_{{\cal C}_S}^{\Azscriptsize}$;

      \item[(2.2)]
       $f:{\cal C}_{\cal A} \rightarrow Y$
       is a morphism of (commutative) schemes;

      \item[(2.3)]
       two such pairs
        $({\cal O}_{{\cal C}_S} \subset {\cal A}_1
                     \subset {\cal O}_{{\cal C}_S}^{\Azscriptsize}\;,\;
         f_1:{\cal C}_{{\cal A}_1}\rightarrow Y)$ and
        $({\cal O}_{{\cal C}_S} \subset {\cal A}_2
                     \subset {\cal O}_{{\cal C}_S}^{\Azscriptsize}\;,\;
         f_2:{\cal C}_{{\cal A}_2}\rightarrow Y)$
        are equivalent, in notation
        $$
         ({\cal O}_{{\cal C}_S} \subset {\cal A}_1
                     \subset {\cal O}_{{\cal C}_S}^{\Azscriptsize}\;,\;
          f_1:{\cal C}_{{\cal A}_1}\rightarrow Y)\;
         \sim\;
         ({\cal O}_{{\cal C}_S} \subset {\cal A}_2
                     \subset {\cal O}_{{\cal C}_S}^{\Azscriptsize}\;,\;
              f_2:{\cal C}_{{\cal A}_2}\rightarrow Y)\,,
        $$
       if there exists a third pair
        $({\cal O}_{{\cal C}_S} \subset {\cal A}_3
                     \subset {\cal O}_{{\cal C}_S}^{\Azscriptsize}\;,\;
         f_3:{\cal C}_{{\cal A}_3}\rightarrow Y)$
        such that
         ${\cal A}_3 \subset {\cal A}_i$ and that
         the induced diagram
          \begin{eqnarray*}
          \xymatrix{
           {\cal C}_{{\cal A}_i}\ar[drr]^{f_i}\ar[d] &&   \\
           {\cal C}_{{\cal A}_3}\ar[rr]^{f_3}        && Y \\
           }
          \end{eqnarray*}
          commutes, for $i=1,\, 2$.
     \end{itemize}
  \end{itemize}

 Let $h:T\rightarrow S$ be a morphism
  and ${\cal C}_T=h^{\ast}{\cal C}_S :=T\times_S{\cal C}_S$.
 Define the {\it pull-back} of $\Phi$ to $T$ to be the morphism
  $\Phi_T:= h^{\ast}\Phi_S:
   (h^{\ast}{\cal C}_S^{\Azscriptsize},h^{\ast}{\cal E}_S)/T
   \rightarrow Y$
  that corresponds to the equivalence class of pairs
   $({\cal O}_{{\cal C}_T} \subset {\cal A}^{\prime}
              \subset {\cal O}_{{\cal C}_T}^{\Azscriptsize}\;,\;
      f^{\prime}:{\cal C}_{{\cal A}^{\prime}}
                        :=\boldSpec{\cal A}^{\prime} \rightarrow Y)$
  that contains the pair
  $$
   ({\cal O}_{{\cal C}_T} \subset \overline{h^{\ast}{\cal A}_{\Phi}}
                        \subset {\cal O}_{{\cal C}_T}^{\Azscriptsize}\,,\,
    \overline{h^{\ast}f_{\Phi}}:
      \boldSpec\overline{h^{\ast}{\cal A}_{\Phi}} \rightarrow Y)\,,
  $$
  where
   \begin{itemize}
    \item[$\cdot$]
     ${\cal O}_{{\cal C}_S} \subset  {\cal A}_{\Phi}
          \subset {\cal O}_{{\cal C}_S}^{\Azscriptsize}$
      gives the surrogate ${\cal C}_{\Phi}$ of ${\cal C}_S^{\Azscriptsize}$
      associated to $\Phi$,
     it goes with a morphism $f_{\Phi}:{\cal C}_{\Phi}\rightarrow Y$,

    \item[$\cdot$]
     $\overline{h^{\ast}{\cal A}_{\Phi}}$ is the image
      ${\cal O}_{{\cal C}_T}$-subalgebra of
      the ${\cal O}_{{\cal C}_T}$-algebra homomorphism
      $h^{\ast}{\cal A}_{\Phi}\rightarrow
       {\cal O}_{{\cal C}_T}^{\Azscriptsize}
       = h^{\ast}{\cal O}_{{\cal C}_S}^{\Azscriptsize}$,
     it contains ${\cal O}_{{\cal C}_T}$ canonically
      as a ${\cal O}_{{\cal C}_T}$-subalgebra,

    \item[$\cdot$]
     $\overline{h^{\ast}f_{\Phi}}:
       \boldSpec\overline{h^{\ast}{\cal A}_{\Phi}} \rightarrow Y$
     is the composition of the canonical
     $\boldSpec\overline{h^{\ast}{\cal A}_{\Phi}}
      \rightarrow h^{\ast}{\cal C}_{\Phi}
       \stackrel{h^{\ast} f_{\Phi}}{\rightarrow} Y$.

   \end{itemize}
 When $\iota_s:\Spec{\Bbb C}\rightarrow S$ corresponds to
  a closed point $s\in S$,
 the pull-back $\Phi_s:=\iota_s^{\ast}\Phi$
  is called the {\it fiber} of $\Phi$ at $s\in S$.

 $\Phi:({\cal C}^{\Azscriptsize}_S,{\cal E}_S)/S \rightarrow Y$
  is said to be a {\it family of morphisms} over $S$
  {\it of combinatorial type} $(g,r,\chi\,|\,\beta)$
  if, in addition,
   $\Phi_s$ is of combinatorial type $(g,r,\chi\,|\,\beta)$
   for every closed point $s\in S$.
} \end{definition}

\begin{remark} {\it
 $[$surrogates in family and the significance of Chan-Paton modules$\,]$.}
{\rm
 Caution that,
  while both ${\cal O}_{{\cal C}_S}$ and
   ${\cal O}_{{\cal C}_S}^{\Azscriptsize}$ are flat over $S$,
 the ${\cal O}_{{\cal C}_S}$-module ${\cal A}_{\Phi}$ on ${\cal C}_S$
  that gives the family ${\cal C}_{\Phi}$ of surrogates associated
  to $\Phi$ in general is {\it not} flat over $S$.
 {\it Nor} does the surrogate pass from one to another directly
  via the usual pull-back under base change.
 This is an indication that the pure moduli problem of morphisms
  from Azumaya prestable curves to $Y$ is a technically worse problem
 than the moduli problem of D-strings.
 In the latter, the extra data of Chan-Paton modules on D-branes
  enlarges the moduli problem while taming it significantly.
 This will become clearer from the point of view of
  the next subsection.
} \end{remark}

%
%
%
%
%
%
%
%
%
%
%

\bigskip

\subsection{Azumaya without Azumaya, morphisms without morphisms.}

\begin{flushleft}
{\bf Recasting into commutative algebro-geometric setting.}
\end{flushleft}
A morphism from an Azumaya prestable curve with a fundamental module
 to $Y$ under our setting can be redescribed/recast
 effectively and faithfully in terms of commutative geometric data.
Two equivalent such data are given below.

\begin{lemma}
{\bf [Azumaya without Azumaya, morphisms without morphisms].}
 Given a prestable curve $C$,
  a morphism $\varphi:(C^{\Azscriptsize},{\cal E})\rightarrow Y$
   from an Azumaya curve $C^{\Azscriptsize}$ over $C$
    with a fundamental module ${\cal E}$ of rank $r$ is given by
  a coherent ${\cal O}_{C\times Y}$-module $\widetilde{\cal E}$
   on $(C\times Y)/C$ of relative length $r$.
 The correspondence is functorial and bijective.
\end{lemma}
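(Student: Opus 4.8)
The plan is to make explicit the two mutually inverse assignments between morphisms and coherent sheaves, and then upgrade the resulting bijection to a statement over an arbitrary base.

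\emph{From a morphism to a sheaf.} Given $\varphi:(C^{\Azscriptsize},{\cal E})\rightarrow Y$ with canonical presentation $({\cal O}_C\subset{\cal A}_\varphi\subset{\cal O}_C^{\Azscriptsize},\,f_\varphi:C_\varphi\rightarrow Y)$, I would first invoke Lemma/Definition~2.1.8 to regard $C_\varphi$ as the closed subscheme $\Gamma_\varphi\subset C\times Y$ with $\pi_\varphi=\pr_1|_{\Gamma_\varphi}$ and $f_\varphi=\pr_2|_{\Gamma_\varphi}$. Restricting the ${\cal O}_C^{\Azscriptsize}$-action along ${\cal A}_\varphi\hookrightarrow{\cal O}_C^{\Azscriptsize}$ turns ${\cal E}$ into the ${\cal O}_{C_\varphi}$-module $_{{\cal O}_{C_\varphi}}{\cal E}$, and I set $\widetilde{\cal E}:=\imath_\ast(_{{\cal O}_{C_\varphi}}{\cal E})$ for the closed immersion $\imath:C_\varphi=\Gamma_\varphi\hookrightarrow C\times Y$. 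Relative length $r$ over $C$ is then automatic: $\pr_1$ restricted to $\Gamma_\varphi$ is the finite (hence affine) morphism $\pi_\varphi$, so $\pr_{1\,\ast}\widetilde{\cal E}={\pi_\varphi}_\ast(_{{\cal O}_{C_\varphi}}{\cal E})={\cal E}$ is locally free of rank $r$ (as in the proof of Lemma~2.1.9), and affineness of $\pi_\varphi$ forces $_{{\cal O}_{C_\varphi}}{\cal E}$, hence $\widetilde{\cal E}$, to be flat over $C$ with every closed fibre a sheaf on $Y$ of length $r$.

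\emph{From a sheaf to a morphism.} Given $\widetilde{\cal E}$ coherent on $C\times Y$ of relative length $r$ over $C$, put ${\cal E}:=\pr_{1\,\ast}\widetilde{\cal E}$; flatness over $C$ with constant fibre length $r$ makes ${\cal E}$ locally free of rank $r$, fixing ${\cal O}_C^{\Azscriptsize}:=\Endsheaf_{{\cal O}_C}({\cal E})$. Let $Z\subset C\times Y$ be the scheme-theoretic support of $\widetilde{\cal E}$; since $\Supp\widetilde{\cal E}$ is quasi-finite over $C$ and $C\times Y\rightarrow C$ is projective, $Z\rightarrow C$ is finite, and ${\cal A}:=\pr_{1\,\ast}{\cal O}_Z$ is a commutative ${\cal O}_C$-algebra with $\boldSpec{\cal A}=Z$. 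The (faithful, by definition of scheme-theoretic support) ${\cal O}_Z$-action on $\widetilde{\cal E}$, being also ${\cal O}_C$-linear, yields after pushing down an injection of ${\cal O}_C$-algebras ${\cal A}\hookrightarrow\Endsheaf_{{\cal O}_C}({\cal E})={\cal O}_C^{\Azscriptsize}$ carrying ${\cal O}_C$ onto its usual central copy; so $({\cal O}_C\subset{\cal A}\subset{\cal O}_C^{\Azscriptsize},\,\pr_2|_Z:Z\rightarrow Y)$ defines a morphism $\varphi$. It is already the canonical presentation: if $\pr_2|_Z$ factored through $\boldSpec{\cal A}'$ with ${\cal O}_C\subset{\cal A}'\subset{\cal A}$, then (because $Z\hookrightarrow C\times Y$ is a closed immersion with $\pr_2|_Z$ as $Y$-component) the induced $Z\rightarrow\boldSpec{\cal A}'$ would be a finite monomorphism, hence a closed immersion, hence --- being induced by the injection ${\cal A}'\hookrightarrow{\cal A}$ --- an isomorphism, so ${\cal A}'={\cal A}$; this is the minimal property of Remark~2.1.4.

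\emph{Mutual inversion and functoriality.} Running the first construction and then the second recovers $({\cal A}_\varphi,f_\varphi)$: the scheme-theoretic support of $\imath_\ast(_{{\cal O}_{C_\varphi}}{\cal E})$ is $C_\varphi$ (${\cal A}_\varphi$ acts faithfully on ${\cal E}\subset\Endsheaf_{{\cal O}_C}({\cal E})$), $\pr_{1\,\ast}{\cal O}_{C_\varphi}={\cal A}_\varphi$, and the induced algebra map into $\Endsheaf_{{\cal O}_C}({\cal E})$ is the given inclusion; running them in the other order gives back $\widetilde{\cal E}$, since the intermediate $Z$, ${\cal A}$ and embedding into $C\times Y$ are literally those produced from $\widetilde{\cal E}$, and the pushforward to $C\times Y$ of the ${\cal O}_Z$-module $\widetilde{\cal E}$ is $\widetilde{\cal E}$ itself. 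For functoriality I would run both constructions over a base $S/{\Bbb C}$: attach to a family $\Phi:({\cal C}^{\Azscriptsize}_S,{\cal E}_S)/S\rightarrow Y$ (Definition~2.1.15) the coherent ${\cal O}_{{\cal C}_S\times Y}$-module supported on the graph ${\cal C}_\Phi\subset{\cal C}_S\times Y$, and conversely.

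\emph{Main obstacle.} The delicate point is not the bijection but its compatibility with base change $h:T\rightarrow S$. Scheme-theoretic support does not commute with arbitrary base change, so one cannot naively identify $h^\ast\widetilde{\cal E}_S$ with the sheaf attached to $h^\ast\Phi_S$. What makes it work is exactly the way the pull-back of morphisms is defined in Definition~2.1.15, through the image algebra $\overline{h^\ast{\cal A}_\Phi}$ rather than $h^\ast{\cal A}_\Phi$: the pull-back along $h$ of the action of ${\cal A}_\Phi$ on ${\cal E}_S$ factors through exactly $\overline{h^\ast{\cal A}_\Phi}\subset\Endsheaf_{{\cal O}_{{\cal C}_T}}(h^\ast{\cal E}_S)$, so the scheme-theoretic support of $h^\ast\widetilde{\cal E}_S$ is $\boldSpec\overline{h^\ast{\cal A}_\Phi}$ --- precisely ${\cal C}_{h^\ast\Phi}$; and flatness of $\widetilde{\cal E}_S$ over ${\cal C}_S$ (i.e.\ local freeness of ${\cal E}_S$) lets $\pr_{1\,\ast}$ commute with $h$, so ${\cal E}_S$ also pulls back correctly. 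Granting this, functoriality is formal and the lemma follows.
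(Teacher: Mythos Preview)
Your argument is correct and follows the same two-way construction as the paper: from $\varphi$ to $\widetilde{\cal E}$ via the graph $\Gamma_\varphi\subset C\times Y$ and the module $_{{\cal O}_{C_\varphi}}{\cal E}$, and from $\widetilde{\cal E}$ back to $\varphi$ via the scheme-theoretic support and the tautological ${\cal O}_\Gamma$-action on $\pr_{1\ast}\widetilde{\cal E}$. The paper's own proof is considerably terser---it simply records the two constructions, notes that the minimal property of Remark~2.1.4 holds in the reverse direction, and declares functoriality and bijectivity to follow from canonicity---whereas you supply the explicit verifications (relative length $r$, the finite-monomorphism argument for minimality, mutual inversion, and the base-change compatibility via $\overline{h^\ast{\cal A}_\Phi}$) that the paper leaves to the reader.
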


\begin{proof}
 Given $\varphi$,
  it follows from Sec.~2.1 that
  one can canonically associated to it
   an ${\cal O}_{\Gamma_{\varphi}}$-module
   $\widetilde{\cal E} := \; _{{\cal O}_{C_{\varphi}}}\!{\cal E}$
   via the canonical isomorphism
    $C_{\varphi}\simeq \Gamma_{\varphi}\subset C\times Y$.
 As an ${\cal O}_{C\times Y}$-module on $(C\times Y)/C$,
  $\widetilde{\cal E}$ satisfies the stated properties.

 Conversely, given $\widetilde{\cal E}$ on $(C\times Y)/C$ as stated,
  let
   $\Gamma=\Supp{\widetilde{\cal E}}$,
   ${\cal E}:= \pr_{1\ast} \widetilde{\cal E}$, and
   ${\cal O}_C^{\Azscriptsize} :=\Endsheaf{\cal E}$.
  Then the tautological action of ${\cal O}_{\Gamma}$
   on $\widetilde{\cal E}$ gives rise to
   an action of ${\cal O}_{\Gamma}$ on ${\cal E}$.
  This realizes ${\cal O}_{\Gamma}$ canonically as
   a commutative ${\cal O}_C$-sub-algebra ${\cal A}$
    in ${\cal O}_C^{\Azscriptsize}$.
  The morphism $\Gamma\rightarrow Y$ from the restriction of
   $\pr_2:C\times Y\rightarrow Y$ gives rise to
   a morphism $f:C_{\cal A}:=\boldSpec{\cal A}\rightarrow Y$
   that satisfies the minimal property in
   Remark~2.1.4.
 Thus, $\widetilde{\cal E}$ corresponds canonically to
  $\varphi:(C^{\Azscriptsize},{\cal E})\rightarrow Y$
  given by ${\cal E}$ and the pair
  $( {\cal O}_C \subset {\cal A}
        \subset {\cal O}_C^{\Azscriptsize}:=\Endsheaf{\cal E}\,,\,
     f:C_{\cal A}\rightarrow Y )$ as defined.

 The functoriality and bijectivity of the correspondence follow
  from the fact that the above two-way correspondence is canonical.

\end{proof}

\noindent
Cf.\ Figure~2-2-1.$\;$
It is worth emphasizing that
 $\widetilde{\cal E}$ on $C\times Y$ in the above lemma
 is flat over $C$ and, hence,
its push-forward $\pr_{1\ast}\widetilde{\cal E}$ to $C$
 is locally free.

\begin{figure}[htbp]
 \epsfig{figure=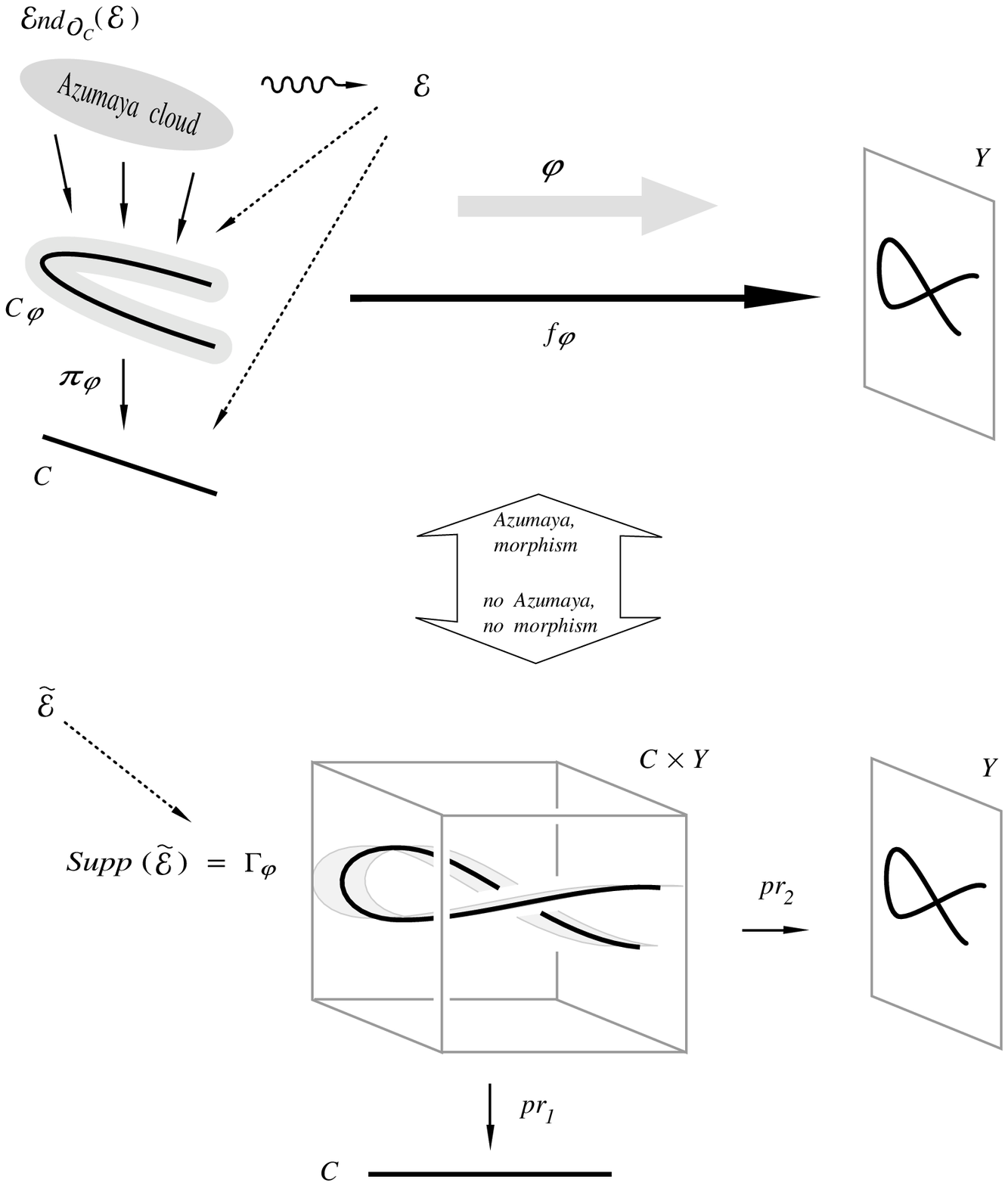,width=16cm}
 \centerline{\parbox{13cm}{\small\baselineskip 12pt
  {\sc Figure} 2-2-1.
   The correspondence between
    $\{\,$morphisms $\varphi:(C^{A\!z},{\cal E})\rightarrow Y$$\,\}$
     and
    $\{\,$torsion ${\cal O}_{C\times Y}$-modules $\widetilde{\cal E}$
     on $C\times Y$ that are flat over $C$
     with relative dimension $0$$\,\}$.
   Here, only $Y$ is fixed and
    the correspondence is at the level of related moduli stacks.
   In the figure, a module over a scheme is indicated by a dash-arrow
    $\dasharrow$.
  }}
\end{figure}

\begin{remark}
 $[$AwoA $\&$ MwoM in general$\,]$.
{\rm
 Replacing the prestable curve $C$ in Lemma~2.2.1
  by a scheme $X$ gives a canonical description of
  a morphism $\varphi:(X^{\!\Azscriptsize},{\cal E})\rightarrow Y$
  in terms of a coherent ${\cal O}_{X\times Y}$-module
  $\widetilde{\cal E}$ of relative length $r$ on $(X\times Y)/X$.
}\end{remark}

Lemma~2.2.1 and Remark~2.2.2 allow us to bring the language and
 techniques in purely commutative algebraic geometry into our study.
In this way we are tossed back to the category of commutative geometry.

\bigskip

Applying Lemma~2.2.1 and Remark~2.2.2 to the case of D$0$-branes
 (cf. [Va] and [L-Y2: Sec.~4]),
one then realizes the moduli stack of D$0$-branes of type $r$ on $Y$
 as the algebraic stack of $0$-dimensional, length $r$,
 torsion sheaves on $Y$;
see Sec.~3.1 for more discussions.
It follows immediately that

\begin{corollary}
{\bf [topological D-string as a smearing of D0-branes].}
 A morphism $\varphi:(C^{\Azscriptsize},{\cal E})\rightarrow Y$
  corresponds canonically to a morphism from $C$ to the stack of
  D$0$-branes on $Y$; and vice versa.
\end{corollary}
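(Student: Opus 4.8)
The plan is to compose two correspondences, of which only the first carries any content: Lemma~2.2.1, and then the mere \emph{definition} of the D0-brane stack. By Lemma~2.2.1 (``Azumaya without Azumaya, morphisms without morphisms''), a morphism $\varphi:(C^{\Azscriptsize},{\cal E})\rightarrow Y$ with $\rank{\cal E}=r$ is the same datum as a coherent ${\cal O}_{C\times Y}$-module $\widetilde{\cal E}$ on $(C\times Y)/C$, flat of relative length $r$ over $C$, the correspondence being functorial and bijective; and, as recorded just after that lemma, $\pr_{1\ast}\widetilde{\cal E}$ is then automatically locally free of rank $r$ and recovers ${\cal E}$.

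On the other side, the stack ${\frak M}^{D0}_r(Y)$ of D0-branes of type $r$ on $Y$ is, by construction (cf.\ Sec.~3.1), the algebraic stack of $0$-dimensional, length-$r$ torsion coherent sheaves on $Y$; i.e.\ for a ${\Bbb C}$-scheme $S$, a morphism $S\rightarrow {\frak M}^{D0}_r(Y)$ is the same as a coherent ${\cal O}_{S\times Y}$-module on $(S\times Y)/S$ flat over $S$ with $0$-dimensional fibers of length $r$, isomorphisms of such sheaves giving the $2$-morphisms. Taking $S=C$, this is verbatim the object attached to $\varphi$ by Lemma~2.2.1; there is nothing extra to verify about the flatness/length conditions, since $Y$ is projective and $\widetilde{\cal E}$ is $0$-dimensional over $C$, so $\Supp\widetilde{\cal E}$ is finite over $C$ and the two conditions literally coincide. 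Composing the two bijections gives, for $\rank{\cal E}=r$, a morphism $C\rightarrow {\frak M}^{D0}_r(Y)$ — a morphism from $C$ to the (type-$r$ part of the) stack of D0-branes on $Y$ — with isomorphic $\varphi$'s (Definition~2.1.5) going to $2$-isomorphic maps; conversely, given $C\rightarrow {\frak M}^{D0}_r(Y)$ one pulls back the universal family to obtain $\widetilde{\cal E}$ and applies the ``conversely'' construction in the proof of Lemma~2.2.1.

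To state the result at the level of moduli stacks I would also check compatibility with base change $T\rightarrow C$: on the D-string side this is the pull-back of Definition~2.1.15, on the D0-brane side it is pull-back of the universal family, and the two are matched by the functoriality in Lemma~2.2.1 — here it is essential, as stressed in Remark~2.1.16, that the carrier of information is the flat sheaf $\widetilde{\cal E}$ rather than the surrogate ${\cal C}_\varphi$, which need not be flat over $C$ in families. I expect the only delicate point to be bookkeeping: one must be confident that a morphism in the sense of Definition~2.1.2, i.e.\ an \emph{equivalence class} of pairs $({\cal A},f)$ and not just its canonical presentation, is faithfully encoded by $\widetilde{\cal E}$ — equivalently, that the passage through the associated surrogate ${\cal C}_\varphi\simeq\Gamma_\varphi$ of Lemma/Definition~2.1.8 loses no data. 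But that faithfulness is exactly the bijectivity assertion in Lemma~2.2.1, so once that lemma is in hand the corollary is essentially formal; the one substantive remark worth making is that it is precisely the equivalence of the two commutative descriptions of this subsection that makes the D0-brane reading legitimate, rather than that of a bare flat family of $0$-dimensional sheaves on $Y$ over $C$.
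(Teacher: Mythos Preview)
Your proposal is correct and follows exactly the paper's own route: the paper states that applying Lemma~2.2.1 and Remark~2.2.2 to the D$0$-brane case realizes ${\frak M}^{D0}_r(Y)$ as the stack of $0$-dimensional length-$r$ torsion sheaves on $Y$, after which the corollary ``follows immediately''. You have simply spelled out in detail what ``follows immediately'' means, and your additional remarks on base change and on the bijectivity of Lemma~2.2.1 doing the bookkeeping are sound elaborations of the same argument.
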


Replacing $C$ in the corollary by a scheme $X$
 realizes Euclidean $D$-branes of B-type as
 morphisms from $X$ to the stack of D$0$-branes on $Y$.
This gives a precise realization of general D$p$-branes of B-type
 as ``smearing" of D$0$-branes along a scheme/cycle,
cf.\ [L-Y2: the paragraph before Remark 2.2.5] and Figure~3-1-1.

\bigskip

Definition~2.1.12
 of image curve class $\beta$ of a D-string world-sheet
 under a morphism follows the feature of D-branes in superstring theory.
The following lemma shows that it is indeed a ``correct" definition
 for the current prototype mathematical formulation of D-strings
 in Definition~2.1.6 and Remark~2.1.7
 along the Polchinski-Grothendieck Ansatz:

\begin{lemma}
{\bf [invariance of combinatorial type under flat deformation].}
 Let
  ${\cal C}_S/S$ be a flat family of prestable curves
   over (a connected) $S$,
  $\widetilde{\cal E}_S$ be an ${\cal O}_{{\cal C}_S\times Y}$-module
   of relative length $r$ on $({\cal C}_S\times Y)/{\cal C}_S$,
   (in particular, $\widetilde{\cal E}_S$ on ${\cal C}_S\times Y$
    is flat over $S$), and
  $\varphi_s:(C_s^{\Azscriptsize},{\cal E}_s)\rightarrow Y$
   be the morphism, with a fundamental module,
   associated to $\widetilde{\cal E}_s$
   for a closed point $s\in S$.
 Then, the combinatorial type $(g,r,\chi\,|\,\beta)$
  of $\varphi_s$ is constant over $S$.
\end{lemma}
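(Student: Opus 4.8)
The plan is to verify the four entries of $(g,r,\chi\,|\,\beta)$ one at a time, working throughout with the commutative recast of Lemma~2.2.1: the datum $\varphi_s$ is exactly the coherent ${\cal O}_{C_s\times Y}$-module $\widetilde{\cal E}_s$ of relative length $r$ on $(C_s\times Y)/C_s$, and by hypothesis $\widetilde{\cal E}_S$ on ${\cal C}_S\times Y$ is flat over $S$ with support $\Gamma_S:=\Supp\widetilde{\cal E}_S$ proper over $S$ (it is finite over ${\cal C}_S$, which is projective over $S$). Since $S$ is connected, each assertion reduces to local constancy on $S$, and for $g$, $r$, $\chi$ this is routine; the content is in $\beta$.

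The entries $g$, $r$, $\chi$ are immediate. As ${\cal C}_S/S$ is a flat family of nodal curves, $\chi({\cal O}_{C_s})=1-g$ is locally constant, so $g$ is constant. The rank $r$ is constant by hypothesis, being the common relative length of $\widetilde{\cal E}_S$ over ${\cal C}_S$, equivalently the rank of the locally free sheaf ${\cal E}_s=\pr_{1\ast}\widetilde{\cal E}_s$ on $C_s$. For $\chi$: because $\widetilde{\cal E}_s$ is supported on $\Gamma_{\varphi_s}$, which is finite over $C_s$, one has $R^{>0}\pr_{1\ast}\widetilde{\cal E}_s=0$, so $\chi({\cal E}_s)=\chi(\widetilde{\cal E}_s)$ by the Leray spectral sequence; and $\chi(\widetilde{\cal E}_s)$ is locally constant on $S$ by flatness of $\widetilde{\cal E}_S/S$ together with cohomology-and-base-change. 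Hence $\chi$ is constant.

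The heart of the matter is $\beta=\varphi_{s\ast}[C]$, and the first step is the identity
$$
 \varphi_{s\ast}[C]\;=\;(\pr_2)_\ast\,z_s\,,\qquad
 z_s\;:=\;\textstyle\sum_i m_i\,[W_i]\,,
$$
where $W_i$ runs over the $1$-dimensional irreducible components of $\Gamma_{\varphi_s}$ (equivalently, the reductions of the components $C^\prime_i$ of $C_{\varphi_s}$), $m_i=\mathrm{length}_{{\cal O}_{C_s\times Y,\,\eta_i}}\!\big((\widetilde{\cal E}_s)_{\eta_i}\big)$ is the generic multiplicity of $\widetilde{\cal E}_s$ along $W_i$, and $\pr_2:C_s\times Y\rightarrow Y$. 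To prove it I would unwind Definition~2.1.11 and Definition~2.1.12. Since $\pi_{\varphi_s}$ is finite, its fibres are Artinian, so for a general closed point $x^\prime\in W_i$ the connected component $\widehat{x^\prime}$ of $\pi_{\varphi_s}^{-1}(\pi_{\varphi_s}(x^\prime))$ is the local factor of that fibre through $x^\prime$; summing $l(x^\prime)=\dim_{\Bbb C}(\widetilde{\cal E}_s|_{\widehat{x^\prime}})$ over the $d_i:=\degree(W_i\to C_s)$ points $x^\prime$ lying above a general point of $C_s$, and using flatness of $\widetilde{\cal E}_s$ over $C_s$ (base change), identifies $d_i\cdot l_i$ with $\dim_{\kappa(\zeta)}(\widetilde{\cal E}_s)_{\eta_i}=m_i\,d_i$, so $l_i=m_i$ — this is exactly what the length function of Definition~2.1.11 is designed to achieve, cf.\ Remark~2.1.14. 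As $f_{\varphi_s}$ is the restriction of $\pr_2$ under $C_{\varphi_s}\simeq\Gamma_{\varphi_s}$, one gets $\sum_i l_i(f_{\varphi_s})_\ast[(C^\prime_i)_{\redscriptsize}]=(\pr_2)_\ast z_s$. Granting this, local constancy of $(\pr_2)_\ast z_s$ follows from the standard invariance of the cycle attached to a flat family of sheaves: for any line bundle ${\cal L}$ on $Y$ the function $n\mapsto\chi\big(\widetilde{\cal E}_s\otimes\pr_2^\ast{\cal L}^{\otimes n}\big)$ is, for each $s$, a polynomial of degree $\le 1$ in $n$ (as $\widetilde{\cal E}_s$ is supported in dimension $\le 1$) whose linear coefficient is $c_1({\cal L})\cdot(\pr_2)_\ast z_s=c_1({\cal L})\cdot\beta_s$, by the projection formula and asymptotic Riemann--Roch; by flatness of $\widetilde{\cal E}_S\otimes\pr_2^\ast{\cal L}^{\otimes n}$ over $S$ and cohomology-and-base-change this Euler characteristic is locally constant in $s$ for each $n$, hence so is $c_1({\cal L})\cdot\beta_s$, for every ${\cal L}$. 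Since such intersection numbers determine a class in $N_1(Y)$ (the pairing with $N^1(Y)$ being non-degenerate modulo torsion), $\beta_s$ is constant in $N_1(Y)$; the statement in $H_2(Y;{\Bbb Z})$ and in $A_1(Y)$ follows by the same mechanism applied to the relative $1$-cycle $z_S$ over a connecting chain of curves in $S$ (equivalently, a flat family induces a morphism to the relevant Hilbert/Chow scheme, whose connected components carry a well-defined homology/rational-equivalence class).

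I expect the one genuine difficulty to be the identity $\varphi_{s\ast}[C]=(\pr_2)_\ast z_s$, i.e.\ matching the combinatorial definition of generic length — which is forced to go through connected components of fibres of $\pi_{\varphi_s}$ precisely because $\widetilde{\cal E}_s$ need not be generically locally free along a component of $\Gamma_{\varphi_s}$ — with the honest multiplicity of the sheaf $\widetilde{\cal E}_s$ along that component. This is the subtlety flagged in Remark~2.1.14, and it is entangled with the jumping/non-flat behaviour of surrogates in families noted in Remark~2.1.16 (the sheaf $\widetilde{\cal E}_s$ is flat over $C_s$, but the scheme $\Gamma_{\varphi_s}$ need not be, so the general-point arguments must be made on the open locus where the fibres of $\Gamma_{\varphi_s}/C_s$ do not jump). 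Once this identification is in place, the remaining ingredients are the standard flatness and base-change facts recalled above.
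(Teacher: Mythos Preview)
Your proof is correct and rests on the same mechanism as the paper's: constancy of Euler characteristics of twists of $\widetilde{\cal E}_s$ in a flat family. The paper packages everything into a single Hilbert-polynomial computation with respect to the product polarization ${\cal O}_C(1)\boxtimes{\cal O}_Y(1)$, obtaining $P(\widetilde{\cal E}_s,m)=(r\deg(C)+H_Y\cdot\beta)\,m+\chi$ and reading off $\chi$ and $H_Y\cdot\beta$ from the two coefficients; the identification of the linear term uses a $\widetilde{\cal E}_s$-regular hyperplane $H=\pr_1^\ast H_C+\pr_2^\ast H_Y$ and the affineness of $\Supp\widetilde{\cal E}_s$ over $C_s$. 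Your version separates the entries, twists only by line bundles pulled back from $Y$, and varies ${\cal L}$ to determine $\beta$ in $N_1(Y)$ rather than merely $H_Y\cdot\beta$ for one fixed $H_Y$. You also make explicit the identification $l_i=m_i$ (generic length equals sheaf multiplicity along a component), which the paper uses implicitly in the step $\chi(\widetilde{\cal E}_s|_{\pr_2^\ast H_Y})=H_Y\cdot\beta$; this is the content of your ``one genuine difficulty,'' and your argument for it is sound. The paper's route is shorter; yours is more explicit about what the Hilbert polynomial is actually measuring and about the passage from a single intersection number to the full class.
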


\begin{proof}
 Let ${\cal O}_Y(1)$ be the fixed polarization on $Y$ and
  take ${\cal O}_{C\times Y}(1)$ to be
   ${\cal O}_C(1)\boxtimes {\cal O}_Y(1)$.
 Let $\widetilde{\cal E}$ be an ${\cal O}_{C\times Y}$-module of
  relative length $r$ on $(C\times Y)/C$.
 Then, $\Supp\widetilde{\cal E}$ is $1$-dimensional and,
  hence, there exists an $\widetilde{\cal E}$-regular hyperplane
   $H= \pr_1^{\ast}H_C+\pr_2^{\ast}H_Y \in |{\cal O}_{C\times Y}(1)|$
    on $C\times Y$
    with $H_C$ (resp.\ $H_Y$) being a hyperplane in $C$ (resp. $Y$)
     with respect to ${\cal O}_C(1)$ (resp.\ ${\cal O}_Y(1)$)
   such that
    $(\pr_1^{\ast}H_C\cap \pr_2^{\ast}H_Y)
                    \cap \Supp(\widetilde{\cal E})$
    is empty.
 Since the morphism $\Supp(\widetilde{\cal E})\rightarrow C$
  from the restriction
   of the projection map $\pr_1:C\times Y\rightarrow C$
  is affine, one has the values of Hilbert polynomials
  $P(\widetilde{\cal E},\,\bullet\,)$ of $\widetilde{\cal E}$
  at $0$ and $1$ given respectively by
  $$
   P(\widetilde{\cal E},0)\;=\; \chi(\widetilde{\cal E})\;
    =\; \chi(\pr_{1\ast}\widetilde{\cal E})\;
    =\; \chi
  $$
  and
  $$
   P(\widetilde{\cal E},1)\;=\; \chi(\widetilde{\cal E}(1))\;
    =\; \chi\left(\widetilde{\cal E}|_H\right) {1\choose 1} +\chi_0\;
    =\; (r\deg(C) + H_Y\cdot\beta)+\chi\,.
  $$
 This implies that
  $$
   P(\widetilde{\cal E},m)\;
    =\; (r\deg(C) + H_Y\cdot\beta)m + \chi\,.
  $$
 In particular, it is completely determined
  by the combinatorial type $(g,r,\chi\,|\,\beta)$
  of the associated
  $\varphi:(C^{\Azscriptsize},{\cal E})\rightarrow Y$ and
  the polarization chosen on $C$ and $Y$ respectively.
 The lemma follows.

\end{proof}

\begin{remark}
{\rm [{\it Euler characteristic vs.\ degree}].}
{\rm
 Note that
  for a locally free sheaf ${\cal E}$ of rank $r$
   on a prestable curve $C$ of (arithmetic) genus $g$
    with a fixed polarization ${\cal O}_C(1)$,
  its Hilbert polynomial $P({\cal E},m)$ is given by
   $$
    P({\cal E},m)\;=\; r\deg(C)m + \deg({\cal E}) + r(1-g)\,.
   $$
 This follows from
  the Riemann-Roch Theorem for smooth curves,
  additivity of Hilbert polynomials with respect to an short
    exact sequence, and
  the consideration of the canonical short exact sequence of
   ${\cal O}_C$-modules
   $0\rightarrow {\cal E}\rightarrow \nu_{\ast}\nu^{\ast}{\cal E}
     \rightarrow {\cal T}\rightarrow 0$,
   where $\nu:C^{\prime}\rightarrow C$ is the normalization of $C$
    and note that ${\cal T}$ is a torsion sheaf supported on
    the set of nodes of $C$ with the fiber dimension at each node
    equal to $r$.
 Thus, in our problem, $\chi$ in $(g,r,\chi\,|\,\beta)$
  can be replaced by the degree of fundamental modules.
}\end{remark}

\bigskip

\begin{flushleft}
{\bf Isomorphisms.}
\end{flushleft}
\begin{definition}
{\bf [isomorphism].} {\rm
 An ${\cal O}_{C_1\times Y}$-module $\widetilde{\cal E}_1$
  of relative length $r$ on $(C_1\times Y)/C_1$ and
 an ${\cal O}_{C_2\times Y}$-module $\widetilde{\cal E}_2$
  of relative length $r$ on $(C_2\times Y)/C_2$
 are said to be {\it isomorphic}
 if there exists a pair $(h,\widetilde{h})$,
 where
  $h:C_1\simeq C_2$ and
  $\widetilde{h}:\widetilde{\cal E}_1
               \simeq h^{\ast}\widetilde{\cal E}_2$
  are isomorphisms of schemes and coherent sheaves respectively.
Here, we denote the $h$-induced isomorphisms
 $C_1\times Y\simeq C_2\times Y$ between the products still by $h$.
}\end{definition}

The following lemma follows from
 the correspondence between the noncommutative and the commutative
  picture of morphisms, given in the proof of Lemma~2.2.1,
 and a definition-tracing:

\begin{lemma}
{\bf [isomorphism: noncommutative vs.\ commutative picture].}
 An ${\cal O}_{C_1\times Y}$-module $\widetilde{\cal E}_1$
   of relative length $r$ on $(C_1\times Y)/C_1$ and
  an ${\cal O}_{C_2\times Y}$-module $\widetilde{\cal E}_2$
   of relative length $r$ on $(C_2\times Y)/C_2$
  are isomorphic
 if and only if
  the corresponding morphisms
    $\varphi_1:(C_1,{\cal E}_1)\rightarrow Y$ and
    $\varphi_2:(C_2,{\cal E}_2)\rightarrow Y$
   are isomorphic in the sense of Definition~2.1.5.
\end{lemma}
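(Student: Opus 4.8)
The plan is to reduce the equivalence entirely to the explicit two-way dictionary constructed in the proof of Lemma~2.2.1, and to check that the data entering Definition~2.1.5 and the data entering Definition~2.2.6 correspond term by term under it. Recall that in that dictionary a morphism $\varphi:(C^{\Azscriptsize},{\cal E})\to Y$ corresponds to $\widetilde{\cal E}={}_{{\cal O}_{C_{\varphi}}}{\cal E}$ on $\Gamma_{\varphi}=C_{\varphi}\subset C\times Y$, where ${\cal E}=\pr_{1\ast}\widetilde{\cal E}$, where the surrogate's structure sheaf ${\cal A}_{\varphi}={\cal O}_{\Gamma_{\varphi}}$ is recovered as the image of the tautological ${\cal O}_{C\times Y}$-action on $\pr_{1\ast}\widetilde{\cal E}$ inside $\Endsheaf_{{\cal O}_C}(\pr_{1\ast}\widetilde{\cal E})$, and where $f_{\varphi}=\pr_2|_{\Gamma_{\varphi}}$. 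The observation I would isolate first is that each ingredient of the morphism side --- $C$, ${\cal E}$, the subalgebra ${\cal A}_{\varphi}$ together with its embedding in $\Endsheaf{\cal E}$, the identification $C_{\varphi}\simeq\Gamma_{\varphi}$ of Lemma/Definition~2.1.8, and the morphism $f_{\varphi}$ to $Y$ --- is produced from $\widetilde{\cal E}$ by a canonical recipe that is functorial in $\widetilde{\cal E}$. In particular a bare isomorphism of coherent sheaves $\widetilde{\cal E}_1\simeq h^{\ast}\widetilde{\cal E}_2$ automatically transports all of these, so the module-compatibility conditions hidden in Definition~2.1.5 come for free.

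For the implication ``isomorphic morphisms $\Rightarrow$ isomorphic sheaves'' I would start from $h:C_1\xrightarrow{\sim}C_2$ and its lift $\widetilde h$ as in Definition~2.1.5. The assumption that $\widetilde h$ restricts to an isomorphism ${\cal A}_{\varphi_1}\xrightarrow{\sim}h^{\ast}{\cal A}_{\varphi_2}$ makes $\widetilde h:{\cal E}_1\xrightarrow{\sim}h^{\ast}{\cal E}_2$ an isomorphism of modules over these commutative subalgebras; since $\widetilde{\cal E}_i$ is precisely ${\cal E}_i$ regarded as a sheaf on $C_{\varphi_i}\simeq\Gamma_{\varphi_i}$, the map $\widetilde h$ then becomes an isomorphism of sheaves over the induced isomorphism $\Gamma_{\varphi_1}\xrightarrow{\sim}\Gamma_{\varphi_2}$. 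The commuting triangle with $f_{\varphi_1},f_{\varphi_2}$ says exactly that this last isomorphism is compatible with $\pr_2$, while compatibility with $\pr_1$ through $h$ is built in; hence it is the restriction of $h\times\mathrm{id}_Y:C_1\times Y\to C_2\times Y$, and $\widetilde h$ is an isomorphism $\widetilde{\cal E}_1\simeq(h\times\mathrm{id}_Y)^{\ast}\widetilde{\cal E}_2$. This is Definition~2.2.6, the product isomorphism being the one there also denoted $h$.

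For the converse I would run these steps backwards. Given $(h,\widetilde h)$ as in Definition~2.2.6, with $h$ also standing for $h\times\mathrm{id}_Y:C_1\times Y\xrightarrow{\sim}C_2\times Y$: since $h$ and $h\times\mathrm{id}_Y$ are isomorphisms fitting into the commutative square with the two first projections $\pr_1$, pushing $\widetilde h$ forward along $\pr_1$ gives ${\cal E}_1=\pr_{1\ast}\widetilde{\cal E}_1\xrightarrow{\sim}h^{\ast}\pr_{1\ast}\widetilde{\cal E}_2=h^{\ast}{\cal E}_2$, and by functoriality of the ``image subalgebra'' construction this restricts to ${\cal A}_{\varphi_1}\xrightarrow{\sim}h^{\ast}{\cal A}_{\varphi_2}$ compatibly with the inclusions into $\Endsheaf$. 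Moreover $\widetilde h$ forces $\Gamma_{\varphi_1}=\Supp\widetilde{\cal E}_1=(h\times\mathrm{id}_Y)^{-1}(\Gamma_{\varphi_2})$ as subschemes of $C_1\times Y$ with their canonical scheme structures, so the inverse of the restricted $h\times\mathrm{id}_Y$ furnishes a scheme isomorphism $\widehat h:C_{\varphi_2}\xrightarrow{\sim}C_{\varphi_1}$; compatibility of $h\times\mathrm{id}_Y$ with $\pr_2$ gives $f_{\varphi_1}\circ\widehat h=f_{\varphi_2}$, and a definition-trace identifies $\widehat h$ with $\boldSpec$ of the isomorphism ${\cal A}_{\varphi_1}\xrightarrow{\sim}h^{\ast}{\cal A}_{\varphi_2}$ just produced. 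Thus $(h,\widetilde h)$, read on the morphism side, exhibits $\varphi_1$ and $\varphi_2$ as isomorphic in the sense of Definition~2.1.5.

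I expect the only genuinely non-formal point to be keeping the two incarnations of the surrogate aligned --- $\boldSpec{\cal A}_{\varphi}$ as in Definitions~2.1.2 and 2.1.3 versus the graph $\Gamma_{\varphi}\subset C\times Y$ of Lemma/Definition~2.1.8 --- and verifying that the map $\widehat h$ obtained geometrically (restriction of $h\times\mathrm{id}_Y$) is literally the one obtained algebraically ($\boldSpec$ of ${\cal A}_{\varphi_1}\xrightarrow{\sim}h^{\ast}{\cal A}_{\varphi_2}$); this amounts to the compatibility of the two canonical constructions underlying Lemma~2.2.1 and Lemma/Definition~2.1.8. Everything else --- that $\pr_{1\ast}$ commutes with the isomorphism $h\times\mathrm{id}_Y$, that the canonical scheme structure on $\Supp\widetilde{\cal E}$ is transported by a sheaf isomorphism, and the routine checks that the various squares commute --- is bookkeeping.
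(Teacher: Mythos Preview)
Your proposal is correct and follows exactly the route the paper indicates: the paper's proof is a one-line remark that the lemma ``follows from the correspondence between the noncommutative and the commutative picture of morphisms, given in the proof of Lemma~2.2.1, and a definition-tracing,'' and what you have written is precisely that definition-trace spelled out in full. Your care in checking that the geometric $\widehat h$ (via $h\times\mathrm{id}_Y$ on graphs) agrees with the algebraic $\boldSpec$ of ${\cal A}_{\varphi_1}\xrightarrow{\sim}h^{\ast}{\cal A}_{\varphi_2}$ is the only point with any content, and you have identified it correctly.
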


It follows from
 the immediate generalization of Lemma~2.2.1 to families and
 Lemma~2.2.7 that:

\begin{proposition}
{\bf [stack of morphisms as stack of sheaves on products].}
 The stack of morphisms from Azumaya prestable curves
   with a fundamental module to $Y$
  is canonically isomorphic to the stack of torsion sheaves of modules
   on the product of prestable curves with $Y$
   of finite relative length over the prestable curves.
\end{proposition}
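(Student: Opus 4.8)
The plan is to upgrade the canonical bijection of Lemma~2.2.1 to an equivalence of the two stacks, regarded as categories fibered in groupoids over $(\mathrm{Sch}/{\Bbb C})$. Write ${\frak M}$ for the source stack (families of morphisms over $S$ from Azumaya prestable curves with a fundamental module to $Y$, as in Definition~2.1.15, with isomorphisms as in the relative version of Definition~2.1.5) and ${\frak N}$ for the target stack: an object over $S$ is a pair $({\cal C}_S/S,\,\widetilde{\cal E}_S)$ with ${\cal C}_S/S$ a flat family of prestable curves and $\widetilde{\cal E}_S$ a coherent ${\cal O}_{{\cal C}_S\times Y}$-module that is flat over ${\cal C}_S$ of finite relative length (so, ${\cal C}_S$ being flat over $S$, $\widetilde{\cal E}_S$ is automatically flat over $S$ and $\pr_{1\ast}\widetilde{\cal E}_S$ is locally free on ${\cal C}_S$), morphisms over $S$ being the isomorphisms $(h,\widetilde{h})$ of Definition~2.2.6 relative to $S$. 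First I would define a $1$-morphism $F:{\frak M}\rightarrow{\frak N}$: on objects, given $\Phi:({\cal C}_S^{\Azscriptsize},{\cal E}_S)/S\rightarrow Y$ with associated surrogate ${\cal C}_\Phi$ and graph $\Gamma_\Phi\subset{\cal C}_S\times Y$ (the relative version of Lemma/Definition~2.1.8), set $F(\Phi):=({\cal C}_S/S,\,\widetilde{\cal E}_S)$ with $\widetilde{\cal E}_S:=\,_{{\cal O}_{{\cal C}_\Phi}}{\cal E}_S$ transported along ${\cal C}_\Phi\simeq\Gamma_\Phi$, which by Remark~2.2.2 (applied with $X={\cal C}_S$) is an object of ${\frak N}$ of relative length $\rank({\cal E}_S)$; on morphisms $F$ sends $(h,\widetilde{h})$ to the induced isomorphism of graphs and modules, an isomorphism in ${\frak N}$ by the family version of Lemma~2.2.7.

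Next I would construct the candidate inverse $G:{\frak N}\rightarrow{\frak M}$, following verbatim the ``conversely'' half of the proof of Lemma~2.2.1: given $({\cal C}_S/S,\widetilde{\cal E}_S)$, put $\Gamma:=\Supp\widetilde{\cal E}_S$, ${\cal E}_S:=\pr_{1\ast}\widetilde{\cal E}_S$ (locally free, as noted), ${\cal O}_{{\cal C}_S}^{\Azscriptsize}:=\Endsheaf({\cal E}_S)$, and let ${\cal A}\subset{\cal O}_{{\cal C}_S}^{\Azscriptsize}$ be the image of the tautological ${\cal O}_\Gamma$-action on ${\cal E}_S$; the morphism $f:{\cal C}_{\cal A}:=\boldSpec{\cal A}\rightarrow Y$ induced by $\pr_2$ satisfies the minimal property of Remark~2.1.4, so $({\cal O}_{{\cal C}_S}\subset{\cal A}\subset{\cal O}_{{\cal C}_S}^{\Azscriptsize},\,f)$ represents a family of morphisms $G({\cal C}_S/S,\widetilde{\cal E}_S)$, and on morphisms $G$ is defined symmetrically. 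That $G\circ F$ and $F\circ G$ are canonically isomorphic to the respective identity $1$-morphisms is exactly the functorial, canonical bijectivity asserted in Lemma~2.2.1 and its family version, the witnessing isomorphisms being assembled from the canonical identifications ${\cal C}_\Phi\simeq\Gamma_\Phi$, ${\cal E}_S\simeq\pr_{1\ast}\widetilde{\cal E}_S$, ${\cal O}_\Gamma\simeq{\cal A}$, and hence natural in $S$.

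The step I expect to be the main obstacle is checking that $F$ and $G$ are genuine $1$-morphisms of fibered categories, i.e.\ compatible with an \emph{arbitrary} base change $h:T\rightarrow S$ and not merely with fiberwise restriction. The delicate point, flagged in Remark~2.1.16, is that the ${\cal O}_{{\cal C}_S}$-algebra ${\cal A}_\Phi$ defining the surrogate is not flat over $S$ and does not commute with $h^\ast$; this is precisely why the pull-back $h^\ast\Phi$ in Definition~2.1.15 is built from the \emph{image} subalgebra $\overline{h^\ast{\cal A}_\Phi}$ of $h^\ast{\cal A}_\Phi\rightarrow{\cal O}_{{\cal C}_T}^{\Azscriptsize}=h^\ast{\cal O}_{{\cal C}_S}^{\Azscriptsize}$ rather than from $h^\ast{\cal A}_\Phi$ itself. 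On the ${\frak N}$-side, by contrast, ordinary pull-back is well-behaved: since $\widetilde{\cal E}_S$ is flat over ${\cal C}_S$, the sheaf $(h\times\mathrm{id})^\ast\widetilde{\cal E}_S$ on ${\cal C}_T\times Y$ is again coherent, flat over ${\cal C}_T$, of the same finite relative length, and $\pr_{1\ast}(h\times\mathrm{id})^\ast\widetilde{\cal E}_S=h^\ast{\cal E}_S$ by base change along the affine morphism $\Supp\widetilde{\cal E}_S\rightarrow{\cal C}_S$. I would reconcile the two via the identity ${\cal A}_\Phi=\pr_{1\ast}{\cal O}_{\Gamma_\Phi}$: applying $h^\ast$, invoking affine base change, and noting that the module structure of $(h\times\mathrm{id})^\ast\widetilde{\cal E}_S$ factors through its scheme-theoretic support $Z\subseteq(h\times\mathrm{id})^{-1}(\Gamma_\Phi)$ irrespective of whether support itself commutes with base change, one finds that the image of $h^\ast{\cal A}_\Phi$ in $\Endsheaf(h^\ast{\cal E}_S)$ equals the subalgebra generated by $\pr_{1\ast}{\cal O}_Z$ acting on $\pr_{1\ast}(h\times\mathrm{id})^\ast\widetilde{\cal E}_S$ — that is, $\overline{h^\ast{\cal A}_\Phi}$ is exactly the commutative subalgebra attached to $(h\times\mathrm{id})^\ast\widetilde{\cal E}_S$ by the construction of $G$, and $\overline{h^\ast f_\Phi}$ is the corresponding morphism to $Y$. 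Hence $F(h^\ast\Phi)\simeq h^\ast F(\Phi)$ canonically, and symmetrically for $G$; assembling these comparison isomorphisms into a cocycle-compatible system of natural transformations (automatic from their canonicity) yields mutually quasi-inverse $1$-morphisms between ${\frak M}$ and ${\frak N}$, hence the asserted canonical isomorphism of stacks.
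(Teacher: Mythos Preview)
Your proposal is correct and follows essentially the same approach as the paper, which states only that the proposition ``follows from the immediate generalization of Lemma~2.2.1 to families and Lemma~2.2.7'' with no further argument. You have in fact written out much more than the paper does --- in particular the base-change compatibility of $F$ and $G$ via $\overline{h^\ast{\cal A}_\Phi}$, addressing exactly the non-flatness issue raised in Remark~2.1.16 --- so your proof is a strict elaboration of the paper's one-line justification rather than a different route.
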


We will denote the stack of morphisms
 from Azumaya prestable curves with a fundamental module
 to $Y$ of combinatorial type $(g,r,\chi\,|\,\beta)$
 by ${\frak M}_{\Azscriptsize(g,r,\chi)^f}(Y,\beta)$.
This is an algebraic stack over ${\Bbb C}$,
 but only locally of finite-type.
We will describe an atlas of
 ${\frak M}_{\Azscriptsize(g,r,\chi)^f}(Y,\beta)$
 in Sec.~3.2.

\bigskip

\subsection{Boundedness of morphisms.}

\begin{proposition}
{\bf [boundedness of morphisms].}
 Let $({\cal C}^{\Azscriptsize}_S, {\cal E}_S)/S$
  be a family of Azumaya prestable curves with a fundamental module
  of type $(g,r,\chi)$ over a base scheme $S$ (of finite type).
 Then the set $\{\varphi_{\bullet}\}_{\bullet}$ of morphisms
  from fibers of $({\cal C}^{\Azscriptsize}_S,{\cal E}_S)/S$ to $Y$
  of type $(g,r,\chi\,|\,\beta)$ is bounded.
\end{proposition}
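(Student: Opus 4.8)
The plan is to pass to the commutative picture of Lemma~2.2.1 and Proposition~2.2.8, where the assertion turns into a standard boundedness statement for coherent sheaves of one fixed Hilbert polynomial that are all quotients of a single sheaf, and then to settle it with a relative $\Quot$-scheme.

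First I would make everything uniform. Since a finite union of bounded families is bounded and $S$ has only finitely many connected components, we may assume $S$ connected. Fix a relatively ample ${\cal O}_{{\cal C}_S}(1)$ and put ${\cal O}_X(1):={\cal O}_{{\cal C}_S}(1)\boxtimes {\cal O}_Y(1)$ on $X:={\cal C}_S\times Y$, relatively ample over $S$. By Lemma~2.2.1 a morphism $\varphi$ in the set $\{\varphi_\bullet\}_\bullet$, with source $(C_s^{\Azscriptsize},{\cal E}_s)$ over some closed $s\in S$, is the same datum as a coherent ${\cal O}_{C_s\times Y}$-module $\widetilde{\cal E}$ on $(C_s\times Y)/C_s$ of relative length $r$ with $\pr_{1\ast}\widetilde{\cal E}={\cal E}_s$, and (Proposition~2.2.8) the two sides agree as stacks; so it suffices to bound the family of all such $\widetilde{\cal E}$. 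By the Hilbert-polynomial computation carried out in the proof of Lemma~2.2.4 (together with Remark~2.2.5), each of these $\widetilde{\cal E}$ has the single fixed Hilbert polynomial $P(m)=(r\deg(C_s)+H_Y\cdot\beta)m+\chi$ relative to ${\cal O}_X(1)$, where $\deg(C_s)$ is independent of $s$ because $S$ is connected and $\beta$ is fixed.

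Next I would realize all the $\widetilde{\cal E}$'s as quotients of one fixed sheaf on $X$. The key observation is that $\pr_{1\ast}\widetilde{\cal E}={\cal E}_s$ is a fiber of the prescribed coherent sheaf ${\cal E}_S$ on ${\cal C}_S$, so these push-forwards already range over a bounded family. Hence relative Serre vanishing for $\pi:{\cal C}_S\to S$, together with upper-semicontinuity of $h^0$ on the quasi-compact $S$, yields integers $n_0$ and $N$ independent of $s$ such that ${\cal E}_s\otimes{\cal O}_{C_s}(n_0)$ is globally generated with $h^0\le N$ for every closed $s$, i.e.\ there is a surjection ${\cal O}_{C_s}(-n_0)^{\oplus N}\twoheadrightarrow{\cal E}_s$. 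Since $\Supp\widetilde{\cal E}$ is finite over $C_s$, the canonical map $\pr_1^{\ast}\pr_{1\ast}\widetilde{\cal E}\twoheadrightarrow\widetilde{\cal E}$ is surjective; composing the two, each $\widetilde{\cal E}$ is a quotient over $C_s\times Y$ of the restriction of the fixed sheaf ${\cal G}:=(\pr_1^{\ast}{\cal O}_{{\cal C}_S}(-n_0))^{\oplus N}$ on $X$, with Hilbert polynomial $P$.

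Finally, the $\widetilde{\cal E}$'s then constitute ${\Bbb C}$-points of the relative Quot scheme $\Quot^{\,P}_{{\cal G}/X/S}$, which is projective over $S$ by Grothendieck's theorem and hence of finite type over ${\Bbb C}$; restricting the universal quotient to the locally closed locus where it is flat of relative length $r$ over the ${\cal C}_S$-direction — and, if one wants the combinatorial type kept equal to $(g,r,\chi\,|\,\beta)$, intersecting with the union of those connected components on which Lemma~2.2.4 forces the image curve class to equal $\beta$ — produces one family of morphisms over a finite-type base that contains every $\varphi_\bullet$ up to isomorphism (Proposition~2.2.8 once more). This establishes boundedness. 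I expect the genuinely delicate point to be the uniform twist of the third paragraph: locally free sheaves of fixed rank and degree on curves drawn from a bounded family need not themselves form a bounded family, and it is exactly the fact that here the push-forwards $\pr_{1\ast}\widetilde{\cal E}$ are the prescribed fibers of ${\cal E}_S$ that restores the uniformity; everything after that is the routine $\Quot$-scheme packaging.
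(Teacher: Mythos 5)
Your proposal is correct and follows essentially the same route as the paper: recast via Lemma~2.2.1 into torsion sheaves $\widetilde{\cal E}_s$ on $C_s\times Y$ with the fixed Hilbert polynomial $(r\deg(C)+H_Y\cdot\beta)m+\chi$, observe the tautological surjection $\pr_1^{\ast}{\cal E}_s\twoheadrightarrow\widetilde{\cal E}_s$ (valid since $\Supp\widetilde{\cal E}_s$ is affine over $C_s$), and conclude by boundedness of quotients of the given family ${\cal E}_S$ with fixed Hilbert polynomial ([Gr], [H-L: Lemma 1.7.6]). Your extra step of uniformly presenting the ${\cal E}_s$ as quotients of a single twisted free sheaf so as to land in one relative $\Quot$-scheme is just an unpacking of that cited lemma, and your closing remark correctly identifies that the uniformity comes from the ${\cal E}_s$ being prescribed fibers of ${\cal E}_S$ --- exactly the point the paper's proof leans on.
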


\begin{proof}
 Fix
  a relative ample line bundle ${\cal O}_{{\cal C}^{A\!z}_S/S}(1)$
    on ${\cal C}^{\Azscriptsize}_S/S$  and
  an ample line bundle ${\cal O}_Y(1)$ on $Y$
    with the associated hyperplane class $H_Y$.
 Let
  ${\cal O}_{{\cal C}^{A\!z}_S\times Y}(1)
   :=  {\cal O}_{{\cal C}^{A\!z}_S/S}(1) \boxtimes {\cal O}_Y(1)$.
 A morphism $\varphi$ from $(C^{\Azscriptsize}_s,{\cal E}_s)$ to $Y$,
   where $s$ is a closed point of $S$,
  of type of $(g,r,\chi\,|\,\beta)$
  is given by an ${\cal O}_{{\cal C}^{A\!z}_s\times Y}$-module
  $\widetilde{\cal E}_s$ such that
   $\widetilde{\cal E}_s$
    is of relative length $r$ over $C^{\Azscriptsize}_s$ and
    has the Hilbert polynomial
     $P(\widetilde{\cal E}_s,m) = (r\deg(C) + H_Y\cdot\beta)m + \chi$
   and that $\pr_{1\ast}(\widetilde{\cal E}_s)\simeq {\cal E}_s$.
 As $\Supp(\widetilde{\cal E}_s)$ is affine over $C_s$,
 once fixing the isomorphism
   $\pr_{1\ast}(\widetilde{\cal E}_s)\simeq {\cal E}_s$,
  the tautological morphism
   $\pr_1^{\ast}({\cal E}_s)
     \rightarrow \widetilde{\cal E}_s \rightarrow 0$
   of ${\cal O}_{C_s\times Y}$-modules
   is surjective.
 Since ${\cal E}_S/S$ is given, it follows that the set of
  all such $\widetilde{\cal E}_s$ on $C_s\times Y$ is bounded,
  cf.\ [Gr] and [H-L: Lemma 1.7.6].
 The proposition now follows from Lemma~2.2.1.

\end{proof}

Continuing the notation in the above proof,
 as the set of $\widetilde{\cal E}_s$ is bounded,
 so is the set of their (scheme-theoretic) support
 $\Supp(\widetilde{\cal E}_s)$,
 described by the ideal sheaf
  $\Ker({\cal O}_{C_s\times Y}
         \rightarrow \Endsheaf(\widetilde{\cal E}_s))$.
A bound of $\chi(\Supp(\widetilde{\cal E}_s))$,
 hence a bound of the (arithmetic) genus $1-\chi$ of the surrogate
 associated to morphisms of type $(g,r,\chi\,|\,\beta)$
 from fibers of $({\cal C}^{\Azscriptsize}_S,{\cal E}_S)/S$ to $Y$,
can be expressed in terms of Hilbert polynomials and
 the Mumford-Castelnuovo regularity of certain sheaves involved
 as follows.
Let
 $$
  m_0\; :=\;
   \max \left\{ \reg({\cal F})\,:\,
          \begin{array}{l}
           \mbox{${\cal F}\:=\:
                  {\cal O}_{C_s\times Y}\,$  or
                  $\,\Ker({\cal O}_{C_s\times Y}
                    \rightarrow \Endsheaf(\widetilde{\cal E}_s))$}
                                                           \\[.6ex]
           \mbox{from the above bounded family}
          \end{array}
        \right\}\,.
 $$
Here, $\reg({\cal F})$ is the Mumford-Castelnuovo regularity
 for an ${\cal O}_{C_s\times Y}$-module ${\cal F}$
 with respect to ${\cal O}_{C_s\times Y}(1)$.
Since $\reg(\,\bullet\,)$ here is taken over a bounded family of
 modules, $m_0$ is finite in ${\Bbb Z}_{\ge 0}$.
It follows that the Hilbert polynomials satisfy inequalities
 $$
  0\; \le\;  P(\Supp(\widetilde{\cal E}_s), m)\;
      \le \; P({\cal O}_{C_s\times Y}, m)
  \hspace{2ex}\mbox{for all $\,m\ge m_0$}\,.
 $$
Note that $P({\cal O}_{C_s\times Y}, m)$
 is a polynomial $P(m)$ in $m$, independent of $s\in S$.
It follows that
 $$
  P(\Supp(\widetilde{\cal E}_s), m)\;
   =\; \alpha(\Supp(\widetilde{\cal E}_s))\, m\,
       +\, \chi(\Supp(\widetilde{\cal E}_s))
 $$
 with
 $$
  1\; \le\; \alpha(\Supp(\widetilde{\cal E}_s))\;
      \le\; r\deg(C)+H_Y\cdot\beta
 $$
 and
 $$
  -(r\deg(C)+H_Y\cdot\beta)\,m\;
   \le\; \chi(\Supp(\widetilde{\cal E}_s))\;
   \le\; P(m) - m  \hspace{1em}\mbox{for all $\,m\ge m_0$}\,.
 $$
In particular,
 $$
  1-P(m_0)+m_0\;
   \le\; g(C_{\varphi})\;
   \le\; 1\,+\, (r\deg(C)+H_Y\cdot\beta)\,m_0\,.
 $$

\bigskip

\subsection{Morphisms from an Azumaya prestable curve to ${\Bbb P}^k$.}

Morphisms to a projective space ${\Bbb P}^k$ deserves
 some attention.
In the commutative case, there are three presentations of
 a morphism from a scheme $X$ to a projective space:
 (1) as a morphism between schemes,
 (2) as a quotient ${\cal O}_X^{k+1}\rightarrow {\cal L}\rightarrow 0$
      to a line bundle, and
 (3) as a $(k+1)$-tuple of rational functions on $X$.
Let $Y={\Bbb P}^k$, then Sec.~2.1 gives the presentation for
 morphisms from Azumaya prestable curves to ${\Bbb P}^k$
 from Aspect (1).
In the current subsection, we discuss the presentation
 of such morphisms from Aspect (3).\footnote{It
                         would be nice to have a presentation of
                          morphisms from Azumaya curves to ${\Bbb P}^k$
                          in terms of Aspect (2) as well.
                         In some sense, a fundamental module ${\cal E}$
                          of $C^{A\!z}$ may be thought of as a line bundle
                          on ${\cal C}^{\Azscriptsize}$.}

We formulate first a format of localizations that is akin to
 the problem and then define and study morphisms from Azumaya curves
 to ${\Bbb P}^k$.
This subsection is a consequence of [L-Y2].
Details that follow ibidem immediately are omitted.

\bigskip

\begin{flushleft}
{\bf Localizations and covers of commutative ${\cal O}_C$-subalgebras
      of ${\cal O}_C^{\Azscriptsize}$.}
\end{flushleft}
Let ${\cal O}_C\subset {\cal A}\subset {\cal O}_C^{\Azscriptsize}$ be
 a commutative ${\cal O}_C$-subalgebra of ${\cal O}_C^{\Azscriptsize}$.
Let ${\cal K}_C$ (resp.\ ${\cal K}_{C_{\cal A}}$)
 be the sheaf of total quotient rings of ${\cal O}_C$
 (resp.\ ${\cal O}_{C_{\cal A}}$), cf.\ [Ha].
An element $h$ of $\Gamma({\cal K}_{C_{\cal A}})$
 is called a {\it rational function}\footnote{We
                               do not distinguish two rational functions
                                $h_1$ and $h_2$ on a scheme $X$
                               whose domains of definition
                                $V_{h_1}\subset X$ and
                                $V_{h_2}\subset X$
                                share a common open dense subset
                                $V\subset V_{h_1}$ and
                                $V\subset V_{h_2}$.
                               Similarly for rational sections of
                                an ${\cal O}_X$-module.          }
 on $C_{\cal A}$.
As the tautological morphism $\pi^{\cal A}:C_{\cal A}\rightarrow C$
 is finite,
${\cal K}_{C_{\cal A}}\simeq {\cal A}\otimes_{{\cal O}_C}{\cal K}_C$
 canonically as ${\cal A}$-algebras.

\begin{notation} {\rm
 The element in $\Gamma({\cal A}\otimes_{{\cal O}_C}{\cal K}_C)$
  associated to $h\in\Gamma({\cal K}_{C_{\cal A}})$
  is denoted by $m_h$ and
 the element in $\Gamma({\cal K}_{C_{\cal A}})$ associated to
  $m\in\Gamma({\cal A}\otimes_{{\cal O}_C}{\cal K}_C)$
  is denoted by $h_m$.
} \end{notation}

\noindent
The notion of
  function rings of local charts and their localizations and
  that of open covers of $C_{\cal A}$
  in commutative algebraic geometry
 can be re-phrased in terms of localizations of
 ${\cal O}_C$-subalgebras of ${\cal A}\otimes_{{\cal O}_C}{\cal K}_C$,
 as follows.

Recall that, for two commuting idempotents $e$ and $e^{\prime}$
 in the matrix algebra $M_r({\Bbb C})$ over ${\Bbb C}$,
we say that
 $e^{\prime}$ is {\it subordinate to} $e$
 if there exists an idempotent $e^{\prime\prime}\in M_r({\Bbb C})$
  that commutes with both $e$ and $e^{\prime}$
   such that $e=e^{\prime}+e^{\prime\prime}$.
In notation, $e^{\prime}\preceq e$ or $e\succeq e^{\prime}$.

Let $C=\cup_iC_i$ be the decomposition of $C$ into irreducible
 components.
A {\it pseudo-section} (resp.\ {\it local pseudo-section}) $s$ of
 an ${\cal O}_C$-module ${\cal F}$ is by definition a tuple $(s_i)_i$,
 where $s_i$ is a section (resp.\ local section) of ${\cal F}|_{C_i}$.

\begin{definition} {\bf [admissible idempotent pseudo-section].} {\rm
 An {\it admissible idempotent pseudo-section} of
  ${\cal O}_C^{\Azscriptsize}$ is a tuple $e:=(e_i)_i$,
   where
   \begin{itemize}
    \item[(1)]
     $e_i$ is an idempotent section of
      ${\cal O}_{C_i}^{\Azscriptsize}
        := \left.{\cal O}_C^{\Azscriptsize}\right|_{C_i}$
      of constant rank as an endomorphism of ${\cal E}|_{C_i}$;

    \item[(2)]
     for each node $p\in C_i\cap C_j$,
      $e_i(p)$ and $e_j(p)$ commute and
      either $e_i(p)\preceq e_j(p)$ or $e_i(p)\succeq e_j(p)$.
   \end{itemize}
 The {\it addition} $e_1+e_2$ and {\it multiplication} $e_1e_2$
  of two commuting idempotent pseudo-sections
  $e_1 =(e_{1,i})_i$ and $e_2=(e_{2,i})_i$
  are defined by $(e_{1,i}+e_{2,i})_i$ and $(e_{1,i}e_{2,i})_i$
  respectively.
 Similarly, the multiplications $me$ and $em$ of
  a local section $m$ of ${\cal O}_C^{\Azscriptsize}$ with $e$
  are defined component by component.
 Both $me$ and $em$ are local pseudo-sections of
  ${\cal O}_C^{\Azscriptsize}$.
 The same notions are defined for any ${\cal O}_C$-subalgebra
  ${\cal O}_C\subset {\cal A}\subset {\cal O}_C^{\Azscriptsize}$.
} \end{definition}

\begin{definition} {\bf [sheaf of algebra-subsets].} {\rm
(Cf.\ [L-Y2: Definition 3.2.1].)
 (Continuing the notations from above.)
 Let $e=(e_i)_i$ be an admissible idempotent pseudo-section
  of ${\cal O}_C^{\Azscriptsize}$.
 Then  $e$ specifies a sheaf ${{\cal O}_C^{\Azscriptsize}}_{(e)}$ of
  ${\cal O}_C$-submodules of ${\cal O}_C^{\Azscriptsize}$
  that consists of local sections $m$ of ${\cal O}_C^{\Azscriptsize}$
  that satisfy $me=em=m$.
 The restriction
  $\left.({{\cal O}_C^{\Azscriptsize}}_{(e)}\,,\,e)\right|_{C_i}$
  is a sheaf of ${\cal O}_{C_i}$-algebra-subsets of
  $\left.{\cal O}_C^{\Azscriptsize}\right|_{C_i}$
  with the section of identities given by $e_i$.
 With a slight abuse of terminology,
  we shall call ${{\cal O}_C^{\Azscriptsize}}_{(e)}$
  the {\it sheaf of ${\cal O}_C$-algebra-subsets of
           ${\cal O}_C^{\Azscriptsize}$ determined by $e$}.
} \end{definition}

\noindent
Cf.\ Figure~2-4-1.
\begin{figure}[htbp]
 \epsfig{figure=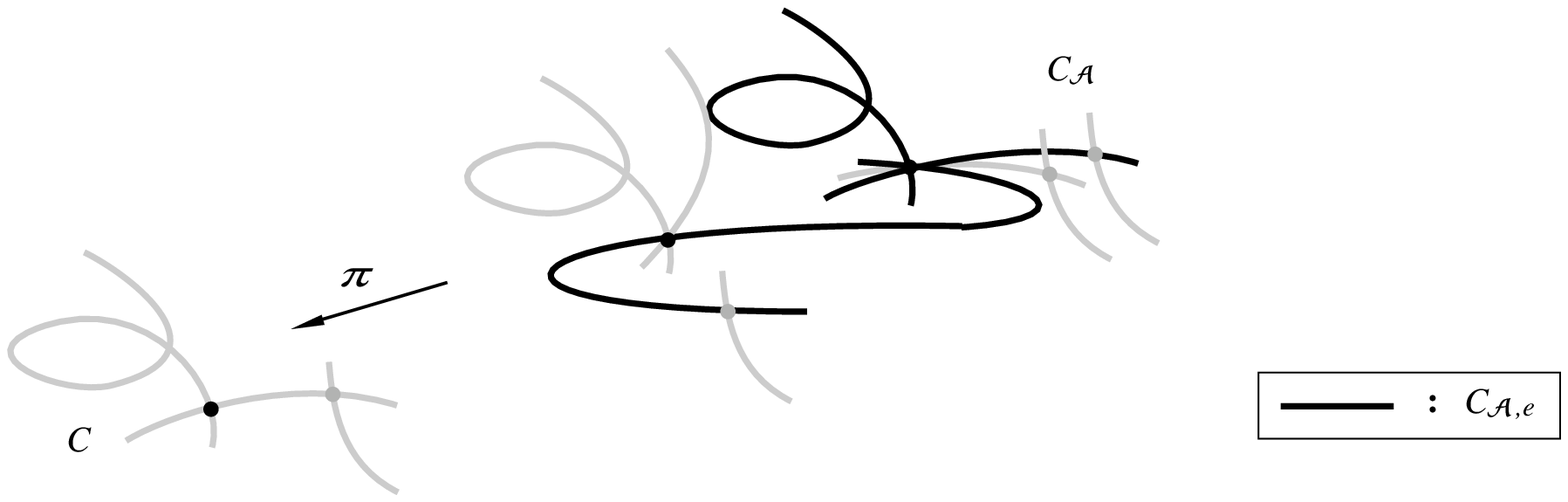,width=16cm}
 \centerline{\parbox{13cm}{\small\baselineskip 12pt
  {\sc Figure} 2-4-1.
  The meaning of $e$:
  $e$ selects for each surrogate $C_{\cal A}$ of $C^{A\!z}$
   a subcurve $C_{{\cal A},e}$
   that is maximal with respect to the property that
   $\smallboldSpec({\cal A}\cap {\cal O}^{A\!z}_{C(e)})$
   is dominated by it as ${\Bbb C}$-schemes.
  ($C_{{\cal A},e}$ can be empty.)
  }}
\end{figure}

\begin{example}
{\bf $[$${{\cal O}_C^{\Azscriptsize}}_{(e)}$$]$.} {\rm
 If $e_i=0$ for some $C_i$,
  then $\left.{{\cal O}_C^{\Azscriptsize}}_{(e)}\right|_{C_i}$
   is the zero ${\cal O}_{C_i}$-algebra-subsets
   of $\left.{\cal O}_C^{\Azscriptsize}\right|_{C_i}$.
 For $e=(e_i)_i$ associated to $1\in \Gamma({\cal O}_C^{\Azscriptsize})$,
  ${{\cal O}_C^{\Azscriptsize}}_{(e)}={\cal O}_C^{\Azscriptsize}$.
} \end{example}

\begin{lemma}
{\bf [characteristic function vs.\ admissible idempotent pseudo-section].}
 Let
  ${\cal O}_C\subset {\cal A}\subset {\cal O}_C^{\Azscriptsize}$ be
   a commutative ${\cal O}_C$-subalgebra of ${\cal O}_C^{\Azscriptsize}$
    and
  $C_{\cal A}:=\boldSpec{\cal A}=C_{{\cal A},1}\cup C_{{\cal A},2}$
   be a decomposition of $C_{\cal A}$ by two disjoint sets
    of irreducible components.
 Let $\chi$ be the characteristic pseudo-function on $C_{\cal A}$
  with support $C_{{\cal A},1}$
  (i..e\ it takes value $1$ on $C_{{\cal A},1}$ and $0$ on
         $C_{\cal A}-C_{{\cal A},1}$).
 Then
  $\chi$ corresponds to a unique admissible idempotent pseudo-section
   $e_{\chi}$ of ${\cal A}$
  such that
   for any rational function $h$ on $C_{\cal A}$ with
    $h|_{C_{{\cal A},2}}=0$,
   $m_he_{\chi}\, (=e_{\chi}m_h)\,=m_h$.
\end{lemma}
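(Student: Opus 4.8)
The plan is to argue locally on $C$ and component by component, reducing everything to a statement about idempotents in a finite commutative algebra over a local ring. First I would invoke Lemma~2.1.8: the surrogate $C_{\cal A}=\boldSpec{\cal A}$ has no embedded points, and since ${\cal A}\subset{\cal O}_C^{\Azscriptsize}$ is torsion-free over ${\cal O}_C$, every component of $C_{\cal A}$ dominates a component of $C$; hence ${\cal K}_{C_{\cal A}}$ is the product of the local rings at the finitely many generic points of its components. Consequently the characteristic pseudo-function $\chi$ is a genuine idempotent of $\Gamma({\cal K}_{C_{\cal A}})$ --- it is $1$ on the factors indexed by the components in $C_{{\cal A},1}$ and $0$ on the others --- and, transported through the canonical isomorphism ${\cal K}_{C_{\cal A}}\simeq {\cal A}\otimes_{{\cal O}_C}{\cal K}_C$ recorded before Notation~2.4.1, it becomes an idempotent $m_{\chi}\in\Gamma({\cal A}\otimes_{{\cal O}_C}{\cal K}_C)\subset\Gamma(\Endsheaf{\cal E}\otimes_{{\cal O}_C}{\cal K}_C)$, i.e.\ a rational idempotent endomorphism of ${\cal E}$ lying in ${\cal A}$ over a dense open of $C$.

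Next, on each irreducible component $C_i$ of $C$ I would set $e_{\chi,i}$ to be the \emph{minimal} idempotent of ${\cal A}|_{C_i}$ --- a regular section, not merely a rational one --- whose value at the generic point $\eta_i$ of $C_i$ dominates $m_{\chi}|_{\eta_i}$, that is, satisfies $e_{\chi,i}\cdot(m_{\chi}|_{\eta_i})=m_{\chi}|_{\eta_i}$. Such a minimal idempotent exists and is unique: ${\cal A}|_{C_i}$ is coherent, so each of its stalks is finite --- hence semilocal --- over the local ring ${\cal O}_{C_i,x}$ and carries only finitely many idempotents, so the infimum of those dominating $m_{\chi}|_{\eta_i}$ is attained; minimality makes the choice independent of $x$, so the local answers glue to a section $e_{\chi,i}$ of ${\cal A}|_{C_i}$. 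The ``constant rank'' clause in Definition~2.4.3 is then automatic, since the image $e_{\chi,i}\cdot({\cal E}|_{C_i})$ is a direct summand of the locally free sheaf ${\cal E}|_{C_i}$, hence locally free, hence of constant rank on the connected $C_i$. Setting $e_{\chi}:=(e_{\chi,i})_i$, the absorbing property $m_h e_{\chi}=e_{\chi}m_h=m_h$ for every rational $h$ on $C_{\cal A}$ with $h|_{C_{{\cal A},2}}=0$ follows from the defining domination property together with commutativity of ${\cal A}\otimes_{{\cal O}_C}{\cal K}_C$; and uniqueness of $e_{\chi}$ follows from the minimality built into its construction, since any admissible idempotent pseudo-section with the absorbing property dominates $e_{\chi}$ componentwise and matching the support data of $\chi$ (the subcurve selected as in Figure~2-4-1) forces equality.

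It remains to check that $e_{\chi}$ is \emph{admissible}, i.e.\ the node condition of Definition~2.4.3. For a node $p\in C_i\cap C_j$, commutativity of $e_{\chi,i}(p)$ and $e_{\chi,j}(p)$ is immediate, because both are elements of the commutative algebra ${\cal A}\otimes_{{\cal O}_C}\kappa(p)$, hence so are their images under $\jmath_p$ in $M_r(\kappa(p))$. The subordination clause --- that $e_{\chi,i}(p)$ and $e_{\chi,j}(p)$ are comparable in the idempotent lattice --- is the substantive point, and I expect it to be the main obstacle: a priori the components of $C_{{\cal A},1}$ and those of $C_{{\cal A},2}$ could attach to the reduced fibre over $p$ in ``crossed'' fashion along the two branches, making the two idempotents incomparable. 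I would rule this out by normalizing $C$ at $p$ and using the flatness of $\boldSpec\overline{\nu^{\ast}{\cal A}}$ over $\widetilde{C}$ together with the coincidence of the reduced fibres of $C_{\cal A}$ over the two branches of the node (valid for a general surrogate, cf.\ the discussion around Lemma~2.1.8 and Figure~2-1-2), which forces $\chi$ to induce \emph{compatible} refinements of the reduced fibre from the two sides, so that the idempotents cut out from the $C_i$-side and the $C_j$-side lie in a single chain. This is the step I would write out carefully; the rest is definition-tracing along the lines of [L-Y2].
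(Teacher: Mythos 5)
The paper offers no proof of this lemma to compare against: Sec.~2.4 opens by declaring that the subsection ``is a consequence of [L-Y2]'' and that details following ibidem are omitted, so your argument has to be judged on its own terms. Much of it is sound: identifying $\chi$ with an idempotent of $\Gamma({\cal K}_{C_{\cal A}})\simeq\Gamma({\cal A}\otimes_{{\cal O}_C}{\cal K}_C)$ via the no-embedded-points lemma (which is Lemma~2.1.9, not 2.1.8) is correct; the absorbing property for any dominating idempotent does follow from $h=h\chi$ at the generic points plus commutativity; and the constant-rank clause is automatic as you say. Two repairs are needed even in the easy part. First, the stalkwise-minimal dominating idempotents do \emph{not} glue --- at a point where ${\cal A}|_{C_i}$ splits further, the stalkwise minimum is strictly smaller than the germ of any global idempotent section --- so you should instead take the minimum over the finite, product-closed set of \emph{global} idempotent sections of ${\cal A}|_{C_i}$, i.e.\ the clopen union of connected components of $\boldSpec({\cal A}|_{C_i})$ meeting $C_{{\cal A},1}$. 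Second, the absorbing property alone cannot single out $e_{\chi}$ (the identity section always satisfies it), so ``unique'' must be read as ``unique minimal''; your domination argument does give that, once minimality is taken globally.

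The genuine gap is exactly the step you flag and then postpone: subordination at a node $p\in C_i\cap C_j$. Your proposed fix --- flatness of $\boldSpec\overline{\nu^{\ast}{\cal A}}$ over $\widetilde{C}$ together with the coincidence of the reduced fibres of $C_{\cal A}$ over the two branches --- does not close it, because that coincidence only says each point of $(\pi^{-1}(p))_{\redscriptsize}$ is approached from both branches; it says nothing about how the chosen components $C_{{\cal A},1}$ distribute over that fibre from the two sides, which is precisely what the ``crossed'' configuration exploits. Concretely, let $C=C_1\cup C_2$ be two lines meeting at one node $p$, let $C_{\cal A}=C\sqcup C\rightarrow C$ be the trivial double cover, so ${\cal A}={\cal O}_C\times{\cal O}_C\subset M_2({\cal O}_C)$ acting on ${\cal E}={\cal O}_C^{\oplus 2}$; write $A_1,B_2$ for the two irreducible components of the first copy and $A_2,B_1$ for those of the second, and take $C_{{\cal A},1}=A_1\cup B_1$. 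The minimal dominating idempotents are then the projection onto the first summand over $C_1$ and onto the second summand over $C_2$; their values at $p$ are orthogonal nonzero idempotents of $M_2({\Bbb C})$, neither subordinate to the other, so the tuple is not admissible, and there is no unique minimal admissible dominating pseudo-section (both $(e_{A_1},1)$ and $(1,e_{B_1})$ are minimal and incomparable). So either the decomposition in the lemma must be read as one into \emph{disjoint subcurves} --- in which case $\chi$ is a global idempotent section of ${\cal A}$, $e_{\chi}$ is its restriction to the components of $C$, and admissibility is trivial because the two values at a node coincide --- or an additional compatibility of the decomposition with the fibres over the nodes must be imposed. Either way, the step you deferred is not definition-tracing; it is where the content (or the hidden hypothesis) of the lemma lives, and your proof as written does not establish it.
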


Let
 ${\cal O}_C\subset {\cal A}\subset {\cal O}_C^{\Azscriptsize}$ be
  a commutative ${\cal O}_C$-subalgebra of ${\cal O}_C^{\Azscriptsize}$
   and
 $h$ be a rational function on $C_{\cal A}:=\boldSpec{\cal A}$.
Let $\chi_h$ be the characteristic pseudo-function on $C_{\cal A}$
 with support the union of the irreducible components of $C_{\cal A}$
  to which the restriction of $h$ is not nilpotent.

\begin{lemma}
{\bf [inverse of rational section and open set].}
 Let
  $e_{\chi_h}$ be the admissible idempotent pseudo-section
   of ${\cal A}$ associated to $\chi_h$  and
  ${\cal A}_{(e_{\chi_h})}
   := {\cal A}\cap {{\cal O}_C^{\Azscriptsize}}_{(e_{\chi_h})}$,
   which is an ${\cal O}_C$-algebra-subset of ${\cal A}$
    with the section of identities $e_{\chi_h}$.
 Then, $m_he_{\chi_h}$ is invertible in
  $\Gamma({\cal A}_{(e_{\chi_h})}\otimes_{{\cal O}_C}{\cal K}_C)$.
 It follows that the open subcurve of $C_{\cal A}$ defined as
   the regular locus of $1/h$ in the support of $\chi_h$
  corresponds to the localization
   ${\cal A}_{e_{\chi_h}}[(m_he_{\chi_h})^{-1}]$
   of the ${\cal O}_C$-algebra-subset ${\cal A}_{e_{\chi_h}}$
   in ${\cal A}_{(e_{\chi_h})}\otimes_{{\cal O}_C}{\cal K}_C$.
\end{lemma}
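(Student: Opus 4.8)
The plan is to reduce both assertions to a computation at the generic points of $C_{\cal A}$, where the total quotient ring splits as a finite product of Artinian local rings, one for each irreducible component, and where the condition ``not nilpotent on a component'' becomes literally ``a unit in the corresponding factor.''

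First I would record the local structure of ${\cal K}_{C_{\cal A}}$. By Lemma~2.1.9 the surrogate $C_{\cal A}=\boldSpec{\cal A}$ has no embedded points, so its associated primes are exactly the generic points $\eta_1,\dots,\eta_n$ of its irreducible components $C_1^{\prime},\dots,C_n^{\prime}$; hence $\Gamma({\cal K}_{C_{\cal A}})\simeq\prod_{j=1}^n R_j$, where $R_j:={\cal O}_{C_{\cal A},\eta_j}$ is zero-dimensional Noetherian local, i.e.\ Artinian, with maximal ideal equal to its nilradical. Under the canonical isomorphism ${\cal K}_{C_{\cal A}}\simeq {\cal A}\otimes_{{\cal O}_C}{\cal K}_C$ recorded above, $h$ corresponds to $m_h=(m_{h,j})_j$, and by the definition of $\chi_h$ one has $C_j^{\prime}\subset\Supp\chi_h$ exactly when $m_{h,j}$ is not nilpotent in $R_j$, i.e.\ (since $R_j$ is Artinian local) exactly when $m_{h,j}\in R_j^{\times}$. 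Since ${\cal K}_C$ is flat over ${\cal O}_C$ (a localization of ${\cal O}_C$ at its non-zero-divisors), tensoring with ${\cal K}_C$ commutes with the intersection defining ${\cal A}_{(e_{\chi_h})}={\cal A}\cap {{\cal O}_C^{\Azscriptsize}}_{(e_{\chi_h})}$; moreover the admissible idempotent pseudo-section $e_{\chi_h}$ of ${\cal A}$ supplied by Lemma~2.4.6 --- branch-wise an idempotent of ${\cal A}$ near nodes --- becomes, inside $\prod_j R_j$, the genuine idempotent $(\epsilon_j)_j$ with $\epsilon_j=1_{R_j}$ for $C_j^{\prime}\subset\Supp\chi_h$ and $\epsilon_j=0$ otherwise. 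Consequently $\Gamma({\cal A}_{(e_{\chi_h})}\otimes_{{\cal O}_C}{\cal K}_C)=e_{\chi_h}\cdot\prod_{j=1}^n R_j=\prod_{C_j^{\prime}\subset\Supp\chi_h}R_j$, which is the total quotient ring of $\boldSpec {\cal A}_{(e_{\chi_h})}$ (this subscheme, being $\boldSpec$ of a submodule of the torsion-free ${\cal O}_C$-module ${\cal O}_C^{\Azscriptsize}$, again has no embedded points, so its associated primes are its minimal ones).

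For the first assertion, $m_h e_{\chi_h}=(m_{h,j}\epsilon_j)_j$ has $j$-th coordinate a unit of $R_j$ on the factors with $C_j^{\prime}\subset\Supp\chi_h$ and is $0$ on the other factors; hence it is a unit of $\prod_{C_j^{\prime}\subset\Supp\chi_h}R_j=\Gamma({\cal A}_{(e_{\chi_h})}\otimes_{{\cal O}_C}{\cal K}_C)$, with explicit inverse $(m_{h,j}^{-1})_{C_j^{\prime}\subset\Supp\chi_h}$. For the description of the open set, observe that on the subcurve $\boldSpec {\cal A}_{(e_{\chi_h})}$ the section $m_h e_{\chi_h}$ lies in no associated (equivalently minimal) prime by the previous sentence, hence is a non-zero-divisor there, so $1/h$ is a well-defined rational function on this subcurve; and a rational function of the form $1/g$ with $g$ a non-zero-divisor is regular at a point $x$ precisely when $g$ is a unit of the local ring at $x$. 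Therefore, on any affine chart $\Spec B$ of $\boldSpec {\cal A}_{(e_{\chi_h})}$ coming from a central localization, the regular locus of $1/h$ inside $\Supp\chi_h$ is the principal open $\Spec B[(m_h e_{\chi_h})^{-1}]$; glueing over such charts identifies that regular locus with $\boldSpec$ of the localized ${\cal O}_C$-algebra-subset ${\cal A}_{e_{\chi_h}}[(m_h e_{\chi_h})^{-1}]$ inside ${\cal A}_{(e_{\chi_h})}\otimes_{{\cal O}_C}{\cal K}_C$, as claimed.

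I expect the one genuine difficulty to be the bookkeeping around ``pseudo-sections'' and ``algebra-subsets'': checking carefully that $e_{\chi_h}$, which is only a pseudo-section of ${\cal A}$ (an idempotent of ${\cal A}$ on each branch, not globally), becomes an honest central idempotent after tensoring with ${\cal K}_C$, that ${\cal A}_{(e_{\chi_h})}\otimes_{{\cal O}_C}{\cal K}_C$ is then the honest unital ring with identity $e_{\chi_h}$ used above, and that the resulting product decomposition is indexed exactly by the components contained in $\Supp\chi_h$. With those identifications in hand, everything else is routine commutative algebra about localizing at a non-zero-divisor.
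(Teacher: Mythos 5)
Your argument is correct. Note that the paper itself states this lemma without proof (the subsection explicitly defers such details to [L-Y2]), so there is no proof of record to compare against; your reduction to the splitting $\Gamma({\cal K}_{C_{\cal A}})\simeq\prod_j {\cal O}_{C_{\cal A},\eta_j}$ into Artinian local rings at the generic points (legitimate by the no-embedded-points Lemma 2.1.9), where ``non-nilpotent'' becomes ``unit,'' $e_{\chi_h}$ becomes the tuple of $0$'s and $1$'s supported on $\Supp\chi_h$, and the regular locus of $1/h$ becomes the principal open of $m_he_{\chi_h}$, is exactly the computation the authors leave to the reader, and the two technical points you flag (flatness of ${\cal K}_C$ to pass the intersection defining ${\cal A}_{(e_{\chi_h})}$ through the tensor product, and torsion-freeness of ${\cal A}_{(e_{\chi_h})}$ to rule out embedded points of its $\boldSpec$) are precisely the ones that need saying. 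The only slip is a citation: the correspondence $\chi_h\leftrightarrow e_{\chi_h}$ you invoke is Lemma 2.4.5, not 2.4.6.
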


Given now a commutative ${\cal O}_C$-subalgebra
 $({\cal O}_C\subset)\,{\cal A}\subset {\cal O}_C^{\Azscriptsize}$,
a collection $\{\, ({\cal A}_{\alpha}\,,\,e_{\alpha})\,\}_{\alpha\in I}$
 of sheaves of ${\cal O}_C$-algebra-subsets of
  ${\cal A}\otimes_{{\cal O}_C}{\cal K}_C$,
  where $e_{\alpha}$ is an admissible idempotent pseudo-section
   of ${\cal A}$ that serves as the identity section of
   ${\cal A}_{\alpha}$ for $\alpha\in I$,
 that satisfy the following conditions:
  \begin{itemize}
   \item[$(1)$]
    fix a well-ordering of the index set $I$, then
    \begin{eqnarray*}
     \lefteqn{
      1\;=\; \sum_{\alpha} e_{\alpha}\;
            -\, \sum_{\alpha_1<\alpha_2} e_{\alpha_1}e_{\alpha_2}\;
            +\, \sum_{\alpha_1<\alpha_2<\alpha_3}
                   e_{\alpha_1}e_{\alpha_2}e_{\alpha_3} }\\[.6ex]
      && \hspace{6em}
          \pm\; \cdots\;
            +\, (-1)^{|I|+1}\,
                 \sum_{\alpha_1<\,\cdots\,<\alpha_{|I|}}
                     e_{\alpha_1}\,\cdots\,e_{\alpha_{|I|}}\,;
    \end{eqnarray*}

   \item[$(2)$]
    $( {\cal O}_C
         \langle {\cal A}_{\alpha_1}
                      \cdots\, {\cal A}_{\alpha_l}\rangle\,,\,
         e_{\alpha_1}\cdots\,e_{\alpha_l} )$
    is a sheaf of ${\cal O}_C$-algebra-subsets
     in ${\cal A}\otimes_{{\cal O}_C}{\cal K}_C$,\\
     for all $\{\alpha_1\,,\cdots\,,\alpha_l\}\subset I$;

   \item[$(3)$]
    for $I_0\subset I$, let
    \begin{eqnarray*}
     e_{I_0}  & = & \prod_{\alpha\in I_0}e_{\alpha}\,,\\[1.2ex]
     \hat{e}_{I_0}
      & = & 1 - \sum_{\alpha^{\prime}} e_{(\alpha^{\prime})}
              + \sum_{\alpha^{\prime}_1<\alpha^{\prime}_2}
                   e_{(\alpha^{\prime}_1)}e_{(\alpha^{\prime}_2)}
                + \,\cdots\,\\[.6ex]
      && \mbox{\hspace{4em}}
         + (-1)^l\sum_{\alpha^{\prime}_1<\,\cdots\,<\alpha^{\prime}_l}
               e_{(\alpha^{\prime}_1)}\,\cdots\,e_{(\alpha^{\prime}_l)}
         +\,\cdots\,
         + (-1)^{|I|-|I_0|}
               e_{(\alpha^{\prime}_1)}\,
                           \cdots\,e_{(\alpha^{\prime}_{|I|-|I_0|})}\,,
    \end{eqnarray*}
    where $|I|$, $|I_0|$ are the cardinality of $I$, $|I_0|$
            respectively  and
          all $\alpha^{\prime}_{\bullet}\in I-I_0$;
    then,
    $$
     {\cal A}\; =\;
      \sum_{I_0\subset I}\,
        \bigcap_{\alpha\in I_0}\,{\cal A}_\alpha e_{I_0} \hat{e}_{I_0}
    $$
    in ${\cal A}\otimes_{{\cal O}_C}{\cal K}_C$;
  \end{itemize}
 give rise to an open cover $\{U_{\alpha}\}_{\alpha}$ of
  $C_{\cal A}:=\boldSpec{\cal A}$
  with $U_{\alpha} :=\boldSpec{\cal A}_{\alpha}$.

\bigskip

\begin{flushleft}
{\bf Morphisms from $C^{\Azscriptsize}$ to ${\Bbb P}^k$.}
\end{flushleft}
Let $Y$ be the projective space over ${\Bbb C}$
 with a fixed standard affine atlas:
 $$
  Y\;=\; {\Bbb P}^k\; =\; \Proj {\Bbb C}[y_0,y_1,\,\cdots\,, y_k]\;
   =\; \cup_{i=0}^k\, V_i\;
   =\; \cup_{i=0}^k\,
         \Spec {\Bbb C}[\mbox{$\frac{y_0}{y_i}$}\,,\,\cdots\,,\,
                        \mbox{$\frac{y_k}{y_i}$}]\,.
 $$
Here
 $y_{\bullet}/y_i$ are treated as formal variables
  with $y_i/y_i =$ the identity $1$ of the ring
  ${\Bbb C}[\mbox{$\frac{y_0}{y_i}$}\,,\,\cdots\,,\,
            \mbox{$\frac{y_k}{y_i}$}]$;
 the gluing
  $V_i \supset V_{ij}:= V_i\cap V_j\stackrel{\sim}{\leftarrow}
       V_{ji}:= V_j\cap V_i \subset V_j$
  of local affine charts is given by
 $$
  \begin{array}{cccccccl}
   {\Bbb C}[\frac{y_0}{y_i}\,,\,\cdots\,,\,\frac{y_k}{y_i}]
    & \hookrightarrow
    & \frac{{\Bbb C}[\frac{y_0}{y_i}\,,\,\cdots\,,\,
                          \frac{y_k}{y_i}\,,\,\frac{y_i}{y_j}]}
           {\left( \frac{y_j}{y_i}\cdot\frac{y_i}{y_j}-1 \right)}
    & \stackrel{\sim}{\longrightarrow}
    & \frac{{\Bbb C}[\frac{y_0}{y_j}\,,\,\cdots\,,\,
                          \frac{y_k}{y_j}\,,\,\frac{y_j}{y_i}]}
           {\left( \frac{y_i}{y_j}\cdot\frac{y_j}{y_i}-1 \right)}
    & \hookleftarrow
    & {\Bbb C}[\frac{y_0}{y_j}\,,\,\cdots\,,\,\frac{y_k}{y_j}]   \\[4.6ex]
   && \frac{y_{\bullet}}{y_i}
    & \longmapsto  & \frac{y_{\bullet}}{y_j}\cdot\frac{y_j}{y_i} \\[1.6ex]
   && \frac{y_i}{y_j}  & \longmapsto  & \frac{y_i}{y_j}   &&&.
  \end{array}
 $$
Then, a morphism $\varphi:C^{\Azscriptsize}\rightarrow {\Bbb P}^k$
 is presented by the following $(k+1)$-tuple of globally-generated
 ${\cal O}_C$-algebra-subsets of
 ${\cal O}_C^{\Azscriptsize}\otimes_{{\cal O}_C}{\cal K}_C$
 from an underlying gluing system of ring-homomorphisms:
 $$
  \left\{\left(\,
   {\cal A}_{(i)}\, :=\,
    {\cal O}_C
     \langle m_{(i),j}\,|\, j\in\{0,\,\cdots\,, k\}-\{i\}\,\rangle
      \;,\;
     e_{(i)}\,
  \right)\right\}_{i=0}^k\,,
 $$
 where (we set $m_{(i),i}=e_{(i)}$ by convention)
 \begin{itemize}
  \item[$\cdot$]
   $\{m_{(i),j}\,|\,i,j=0,\,\ldots\,,k\}$
    is a commuting set of elements in
    $\Gamma(
       {\cal O}_C^{\Azscriptsize}\otimes_{{\cal O}_C}{\cal K}_C)$,

  \item[$\cdot$]
   $e_{(i)}$, $i=0,\,\ldots\,,k$, are admissible idempotent
    pseudo-section of ${\cal O}_C^{\Azscriptsize}$,
 \end{itemize}
 that satisfies the following gluing conditions that match
  with those for the fixed atlas $\{V_i\}_{i=0}^k$ on ${\Bbb P}^k$:
\begin{itemize}
 \item[(1)]
  $1 =\sum_i e_{(i)} -\sum_{i_1<i_2} e_{(i_1)}e_{(i_2)}
     + \,\cdots\,\\[.6ex]
     \mbox{\hspace{6em}}
      + (-1)^{\bullet-1}\sum_{i_1<\,\cdots\,<i_{\bullet}}
                       e_{(i_1)}\,\cdots\,e_{(i_{\bullet})}
      +\,\cdots\,+(-1)^k e_{(0)}\,\cdots\,e_{(k)}$;

 \item[(2)]
  $( m_{(j),j} m_{(i),j} )\,( m_{(j),j} m_{(j),i})\,
   =\, m_{(i),i} m_{(j),j}\,$,
  $\,i,j = 0,\,\ldots\,,k$;

 \item[(3)]
  $m_{(j),j} m_{(i),\bullet}\,
   =\, m_{(j),\bullet} \cdot ( m_{(j),j} m_{(i),j} )\,$,
  $\,i,j, \bullet =0,\,\ldots\,,k$;

 \item[(4)]
  for $I\subset \{0,\,\cdots\,,k\}$, let
  \begin{eqnarray*}
   e_I  & = & \prod_{i\in I}e_i\,,\\[1.2ex]
   \hat{e}_I
    & = & 1 - \sum_j e_{(j)} + \sum_{j_1<j_2} e_{(j_1)}e_{(j_2)}
              + \,\cdots\,\\[.6ex]
    && \mbox{\hspace{4em}}
       + (-1)^l\sum_{j_1<\,\cdots\,<j_l}
                       e_{(j_1)}\,\cdots\,e_{(j_l)}
       +\,\cdots\,
       + (-1)^{k+1-|I|} e_{(j_1)}\,\cdots\, e_{(j_{k+1-|I|})}\,,
  \end{eqnarray*}
  where $|I|$ is the cardinality of $I$ and
        all $j_{\bullet}\in \{0,\,\cdots\,,k\}-I$;
  then, the ${\cal O}_C$-subalgebra
  $$
   {\cal A}\;:=\;
    \sum_{I\subset\{0,\,\cdots\,,k\}}\,
      \bigcap_{i\in I}\,{\cal A}_i e_I \hat{e}_I
  $$
  of ${\cal O}_C^{\Azscriptsize}\otimes_{{\cal O}_C}{\cal K}_C$
  lies in ${\cal O}_C^{\Azscriptsize}$.
\end{itemize}
Note that Condition (1) -- Condition (3) resemble
 the smearing of D0-branes along $C$ (cf.\ [L-Y2: Sec.~4.4])
while the additional Condition (4) says that
 $C_{\varphi}:=\boldSpec{\cal A}$ is dominated by
  the Azumaya noncommutative space $\Space{\cal O}_C^{\Azscriptsize}$,
  should the latter be constructed functorially.
It implies also that $C_{\varphi}$ is proper over $C$.
(These properties from Condition (4) are automatic in the case
 of D$0$-branes).

In terms of this $(k+1)$-tuple of ${\cal O}_C$-algebra-sets,
any gluing system of ring-homomorphisms behind $\varphi$ follows
 from\footnote{Recall
               that a morphism from a (commutative) scheme $X$
               to ${\Bbb A}^k$ is specified by
               a $k$-tuple of elements in the function ring
               $R(X):=\Gamma({\cal O}_X)$ of $X$.}
 $$
  \left\{\,
   \begin{array}{ccccc}
    \varphi^{\sharp}_{(i)}  & :
     & {\Bbb C}[\mbox{$\frac{y_0}{y_i}$}\,,\,\cdots\,,\,
               \mbox{$\frac{y_k}{y_i}$}]\;
     & \longrightarrow   & \Gamma({\cal A}_{(i)})      \\[1.6ex]
    && 1                 & \longmapsto   & e_{(i)}     \\[.6ex]
    && \frac{y_j}{y_i}   & \longmapsto   & m_{(i),j}
   \end{array}\,
  \right\}_{i=0}^k\,.
 $$
The corresponding
 $\varphi_{(i)}:\boldSpec{\cal A}_{(i)}\rightarrow V_i$,
                                             $i=0,\,\ldots\,,k$,
 glue to
 $f_{\varphi}:C_{\varphi}=\boldSpec{\cal A}\rightarrow {\Bbb P}^k$
 that represents $\varphi$.

\begin{definition}
{\bf [nondegenerate/degenerate with respect to standard atlas].}
{\rm
 A morphism $\varphi:C^{\Azscriptsize}\rightarrow {\Bbb P}^k$
   is said to be {\it nondegenerate with respect to}
   a standard affine atlas ${\Bbb P}^k=\cup_{i=0}^k V_i$,
    as above, for ${\Bbb P}^k$
  if the pullback $f_{\varphi}^{-1}(H_i)$ of the hyperplane divisor
    $H_i:= {\Bbb P}^k-V_i$ on ${\Bbb P}^k$ to $C_{\varphi}$
   is a divisor on $C_{\varphi}$ and
   is supported on the smooth locus of
    $(C_{\varphi})_{\redscriptsize}$,\footnote{Recall
                                that $C_{\varphi}$ does not have
                                embedded points.}
   for $i=0,\,\ldots\,,k$.
 $\varphi$ is said to be {\it degenerate with respect to}
   ${\Bbb P}^k=\cup_0^kV_i$
  if $\varphi$ is not non-degenerate with respect to
   ${\Bbb P}^k=\cup_0^kV_i$.
} \end{definition}

In the above discussion, we fix a standard affine atlas on ${\Bbb P}^k$
 and a morphism $\varphi:C^{\Azscriptsize}\rightarrow {\Bbb P}^k$
 can be degenerate with respect to the atlas.
However, given $\varphi:C^{\Azscriptsize}\rightarrow {\Bbb P}^k$,
 one can always choose an atlas,
  still denoted by ${\Bbb P}^k=\cup_0^kV_i$
   with $V_i=
         \Spec {\Bbb C}[\frac{y_0}{y_i}\,,\,\cdots\,,\,\frac{y_r}{y_i}]$,
  so that $\varphi$ becomes nondegenerate with respect to
   ${\Bbb P}^k=\cup_0^kV_i$.
In this case, the above presentation
 $$
  \left\{\left(\,
   {\cal A}_{(i)}\, :=\,
    {\cal O}_C
     \langle m_{(i),j}\,|\, j\in\{0,\,\cdots\,, k\}-\{i\}\,\rangle
      \;,\;
     e_{(i)}\,
  \right)\right\}_{i=0}^k\,,
 $$
 of $\varphi$ has the additional property that:
 \begin{itemize}
  \item[$\cdot$]
   $e_{(i)}=1\in \Gamma({\cal O}_C^{\Azscriptsize})$,
   for $i=0,\,1,\,\ldots,\,k$.
 \end{itemize}
The above presentation of $\varphi$ can then be simplified to
 a commuting $(k+1)$-tuple of rational sections of
 ${\cal O}_C^{\Azscriptsize}$
 $$
   (m_{(0),0},\, m_{(0),1},\,\cdots\,,m_{(0),k})\;  \in\;
   (\Gamma(
    {\cal O}_C^{\Azscriptsize}\otimes_{{\cal O}_C}{\cal K}_C ))^{k+1}
 $$
 that satisfies: ($m_{(0),0}=1$ by convention)
 \begin{itemize}
  \item[(1)]
   $m_{(0),j}$ is generically invertible, for $j=1,\,\ldots\,,k$.

  \item[]
   In this case, $m_{(0),j}$ is invertible in
    $\Gamma({\cal O}_C^{\Azscriptsize}\otimes_{{\cal O}_C}{\cal K}_C)$
   and its inverse therein will be denoted by
    $m_{(0),j}^{-1}$ or $1/m_{(0),j}$.

  \item[(2)]
   Let
    $$
     {\cal A}_{(i)}\; =\;
     {\cal O}_C \langle
      m_{(0),0}/m_{(0),i}, m_{(0),1}/m_{(0),i},\,
                           \cdots\,,m_{(0),k}/m_{(0),i} \rangle\,.
    $$
   Then, the ${\cal O}_C$-subalgebra
    ${\cal A}:=\cap_{i=0}^k {\cal A}_{(i)}$
     of ${\cal O}_C^{\Azscriptsize}\otimes_{{\cal O}_C}{\cal K}_C$
    lies in
     ${\cal O}_C^{\Azscriptsize} \subset
       {\cal O}_C^{\Azscriptsize}\otimes_{{\cal O}_C}{\cal K}_C$.
 \end{itemize}
In terms of this $k$-tuple, any gluing system of ring-homomorphisms
 behind $\varphi$ follows from
 $$
  \left\{\,
   \begin{array}{ccccc}
    \varphi^{\sharp}_{(i)}  & :
     & {\Bbb C}[\mbox{$\frac{y_0}{y_i}$}\,,\,\cdots\,,\,
                \mbox{$\frac{y_k}{y_i}$}]\;
     & \longrightarrow\;
     & \Gamma({\cal O}_C^{\Azscriptsize}\otimes_{{\cal O}_C}{\cal K}_C)
                                               \\[1.6ex]
    && 1                 & \longmapsto   & 1   \\[.6ex]
    && \frac{y_j}{y_i}   & \longmapsto   & m_{(0),j}/m_{(0),i}
   \end{array}\,
  \right\}_{i=0}^k\,.
 $$
The corresponding
 $\varphi_{(i)}:\boldSpec{\cal A}_{(i)}\rightarrow V_i$,
                                            $i=0,\,\ldots\,,k$,
 glue to an
  $f_{\varphi}:C_{\varphi}=\boldSpec{\cal A}\rightarrow {\Bbb P}^k$
 that represents $\varphi$.

\begin{definition}
{\bf [strongly nondegenerate with respect to standard atlas].}
{\rm
 A morphism $\varphi:C^{\Azscriptsize}\rightarrow {\Bbb P}^k$
   is said to be {\it strongly nondegenerate with respect to}
   a standard affine atlas ${\Bbb P}^k=\cup_{i=0}^k V_i$
   for ${\Bbb P}^k$
 if $\varphi$ is nondegenerate with respect to $\{V_i\}_{i=0}^k$ and
     $f_{\varphi}(C_{\varphi})
     \subset {\Bbb P}^k-\cup_{0\le i<j\le k}(H_i\cap H_j)$.
} \end{definition}

Realize $(y_0,\,\cdots\,,y_k)$
 as a $(k+1)$-tuple ${\mathbf t} :=(t_0,\,\cdots\,t_k)$ of
 global sections of ${\cal O}_{{\Bbb P}^k}(1)$
 with $t_i^{-1}(0)=H_i$.
Then, ${\mathbf t}$ determines an embedding
 $$
  \begin{array}{cccccl}
   {\mathbf \pi}\,=\,(\pi_{(ij)})_{0\le i<j\le k}
    & : & {\Bbb P}^k-\cup_{0\le i<j\le k}(H_i\cap H_j)
    & \longrightarrow
    & \prod_{0\le i<j\le k}{\Bbb P}^1_{(ij)}\,
      \simeq\, ({\Bbb P}^1)^{k(k+1)/2}            \\[1.2ex]
   && [y_0:\,\cdots\,:y_k]  & \longmapsto
    & (y_j/y_i)_{i<j}\; =\; (t_j/t_i)_{i<j}  &,
  \end{array}
 $$
 where ${\Bbb P}^1_{(i),j}\simeq {\Bbb P}^1$
  with a fixed coordinate ${\Bbb C}\cup\{\infty\}$.
When $\varphi: C^{\Azscriptsize}\rightarrow {\Bbb P}^k$ is strongly
 nondegenerate with respect to $\{V_i\}_{i=0}^k$,
the presentation
 $(m_{(0),1},\,\cdots\,,m_{(0),k})\,\in\,
  (\Gamma(
     {\cal O}_C^{\Azscriptsize}\otimes_{{\cal O}_C}{\cal K}_C ))^k$
 of $\varphi$ with respect to the above data is characterized by
 (recall $m_{(0),0}=1$ by convention)
 \begin{itemize}
  \item[(1)]
   $m_{(0),j}$ is generically invertible and
   $m_{(0),j_1}m_{(0),j_2}=m_{(0),j_2}m_{(0),j_1}$,
   for $j, j_1, j_2 = 1,\,\ldots\,,k$;

  \item[(2)]
   ${\cal O}_C\langle m_{(0),j}/m_{(0),i}\rangle
    \cap {\cal O}_C\langle m_{(0),i}/m_{(0),j}\rangle
    \subset {\cal O}_C^{\Azscriptsize}$.
 \end{itemize}
 It follows from the minimal property of $C_{\varphi}$ that
  $$
   {\cal A}_{\varphi}\;
   =\; {\cal O}_C
       \left\langle
         {\cal O}_C\langle m_{(0),j}/m_{(0),i}\rangle
         \cap {\cal O}_C\langle m_{(0),i}/m_{(0),j}\rangle\:
                                 :\: 0\le i<j\le k \right\rangle
  $$
  in such a presentation of $\varphi$.

\begin{remark} {\it $[$geometry of commutative ${\cal O}_C$-subalgebra
    of ${\cal O}_C^{\Azscriptsize}\otimes_{{\cal O}_C}{\cal K}_C$$]$.}
{\rm
 Let
  $a$ be a non-nilpotent rational section of
   ${\cal O}_C^{\Azscriptsize}$  and
  ${\cal O}_C\langle a \rangle$ be the ${\cal O}_C$-subalgebra
   of ${\cal O}_C^{\Azscriptsize}\otimes_{{\cal O}_C}{\cal K}_C$
   generated by\footnote{Recall
                          that $1$ is always included in
                          ${\cal O}_C\langle\,\cdot\,\rangle$
                          by convention.}
   $a$.
 Then, $C_a := \boldSpec({\cal O}_C\langle a\rangle)$
  is an open dense subscheme of a finite scheme $\overline{C}_a$
  over $C$ with a tautological rational function $h_a$ on $C_a$
   (or equivalently on $\overline{C}_a$) corresponding to $a$.
 The Generalized Cayley-Hamilton Theorem for a linear operator
  on a finite-dimensional vector space implies that
  $C_a$ is a subscheme of the spectral curve
  $\det(\lambda-a)=0$ in $C\times \Spec {\Bbb C}[\lambda]$
  with the same reduced-scheme structure.
 This implies that $C_a/C$ is canonically a closed subscheme
  of $C\times{\Bbb A}^1$.
 Note that
  $C_a$ contains an open dense subset that is finite
   over an open subset of $C$.
 However, $C_a$ itself in general may not be finite over
  every open subset of $C$.
 Cf.~Figure~2-4-2.
 In general, let ${\cal B}$ be a commutative ${\cal O}_C$-subalgebra
  of ${\cal O}_C^{\Azscriptsize}\otimes_{{\cal O}_C}{\cal K}_C$
  that is globally generated by
  $a_1,\,\ldots\,, a_l \in
   \Gamma({\cal O}_C^{\Azscriptsize}\otimes_{{\cal O}_C}{\cal K}_C)$;
  then $C_{\cal B}:=\boldSpec{\cal B}$ is a closed subscheme of
   the fibered product $C_{a_1}\times_C\,\cdots\,\times_C C_{a_l}$
   and hence is a closed subscheme of $C\times {\Bbb A}^l$.
 Using this, one obtains a second proof of Lemma/Definition~2.1.8.
} \end{remark}
\begin{figure}[htbp]
 \epsfig{figure=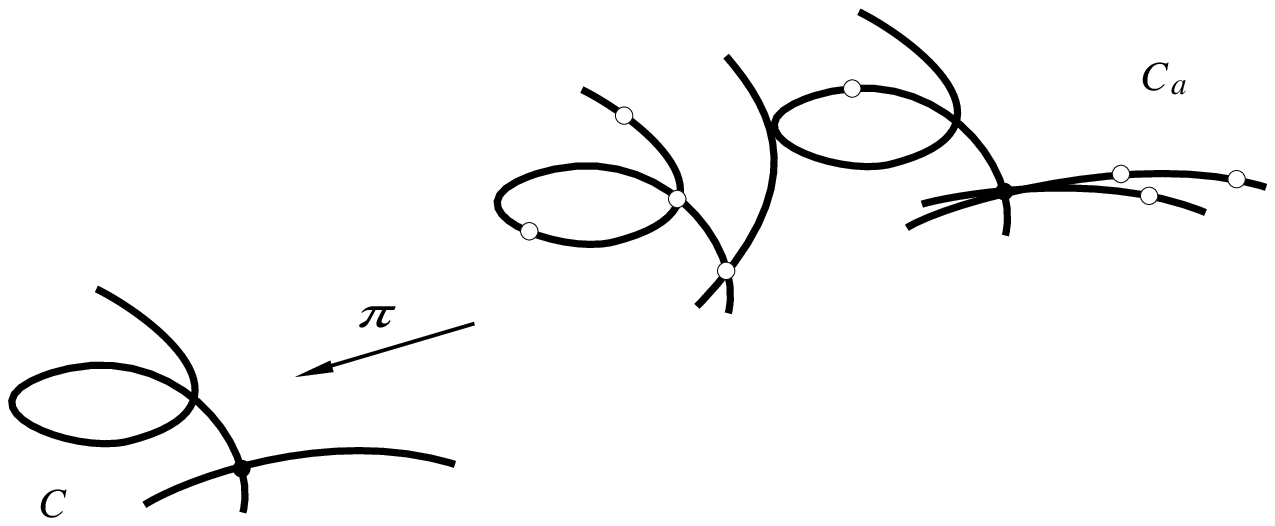,width=16cm}
 \centerline{\parbox{13cm}{\small\baselineskip 12pt
  \center{{\sc Figure} 2-4-2.
  The geometry of $C_a$.
  } }}
\end{figure}

\bigskip

\section{The stack ${\frak M}_{A\!z(g,r,\chi)^f}(Y,\beta)$
         of morphisms with a fundamental module.}

\subsection{D0-branes revisited: the stack ${\frak M}^{D0}_r(Y)$
            of D0-branes of type $r$ on $Y$.}

The moduli stack ${\frak M}^{D0}_r(Y)$ of D$0$-branes of type $r$
 on $Y$ under the Polchinski-Grothendieck Ansatz
 is the moduli stack of morphisms
 from the (un-fixed) Azumaya point $\Space(M_n({\Bbb C}))$
  with the fundamental module ${\Bbb C}^r$ to $Y$.
It follows from Lemma~2.2.1 and Remark~2.2.2 that
 this is precisely the stack of $0$-dimensional, length $r$,
 torsion sheaves on $Y$.
Such an ${\cal O}_Y$-module has the Hilbert polynomial
 the constant $r$,
 in disregard of the polarization ${\cal O}_Y(1)$ on $Y$ chosen.
Such a torsion sheaf is thus a quotient sheaf of
 ${\cal O}_Y\otimes{\Bbb C}^r$.
It follows that ${\frak M}^{D0}_r(Y)$ is a quotient stack
 of the Quot-scheme of ${\cal O}_Y\otimes{\Bbb C}^r$
  with the constant Hilbert polynomial $r$:
 $$
  {\frak M}^{D0}_r(Y)\;
   =\; [\Quot_r({\cal O}_Y\otimes{\Bbb C}^r)/\GL(r;{\Bbb C})]\,.
 $$

There are three substacks of ${\frak M}^{D0}_r(Y)$ worth mentioning:
 \begin{itemize}
  \item[(1)]
   ${\frak M}^{D0}_{r,\scriptsizeChow}(Y)$
    consists of objects
     $[\widetilde{\cal E}]\in {\frak M}^{D0}_r(Y)$
    such that $\Supp\widetilde{\cal E}$ is reduced;
   it fibers over the stack $[Y^r/\Sym_r]$ of $0$-cycles of order $r$
    on $Y$.

  \item[(2)]
   ${\frak M}^{D0}_{r,\scriptsizeHilb}(Y)$
    consists of objects
     $[\widetilde{\cal E}]\in {\frak M}^{D0}_r(Y)$
    such that
     ${\cal O}_{\scriptsizeSupp\widetilde{\cal E}}
                             \simeq \widetilde{\cal E}$;
   it fibers over the Hilbert scheme $\Hilb_r(Y)$ of $0$-dimensional
    subschemes of length $r$ on $Y$.

  \item[(3)]
   ${\frak M}^{D0}_{r, \singletonscriptsize}(Y)$
    consists of objects
     $[\widetilde{\cal E}]\in {\frak M}^{D0}_r(Y)$
    such that
     $(\Supp(\widetilde{\cal E}))_{\redscriptsize}$
     is a singleton;
   it fibers over $Y$.
 \end{itemize}
The role of these three substacks in superstring theory
 are respectively:
 \begin{itemize}
  \item[$(1^{\prime})$]
   ${\frak M}^{D0}_{r,\scriptsizeChow}(Y)$
    is related to the classical moduli space of
    gas of $r$-many D$0$-branes on $Y$.

  \item[$(2^{\prime})$]
   ${\frak M}^{D0}_{r,\scriptsizeHilb}(Y)$
    is related to the quantum moduli space of
    gas of $r$-many D$0$-branes on $Y$, when $Y$ is a smooth
    projective surface,

  \item[$(3^{\prime})$]
   ${\frak M}^{D0}_{r,\singletonscriptsize}(Y)$
    could contain various birational models for $Y$
     and, when $Y$ is singular, partial smooth resolutions of $Y$.
   These partial resolutions are the candidate target-space structures
    revealed as the vacuum manifolds/varieties/spaces
    in the different phases of the quantum field theory
    on the $r$-stacked D$0$-brane probe to a singular $Y$.
 \end{itemize}
See [Va] for Item $(1^{\prime})$ and Item $(2^{\prime})$ and
 [L-Y2: Sec.~2.1, Sec.~4.4, and quoted references].

It follows from Corollary~2.2.3 that
 one can construct ${\frak M}_{\Azscriptsize(g,r,\chi)^f}(Y,\beta)$
 as a substack of the stack of morphisms
 from prestable curves of genus $g$ to ${\frak M}^{D0}_r(Y)$.
Cf.\ Figure~3-1-1.

\begin{figure}[htbp]
 \epsfig{figure=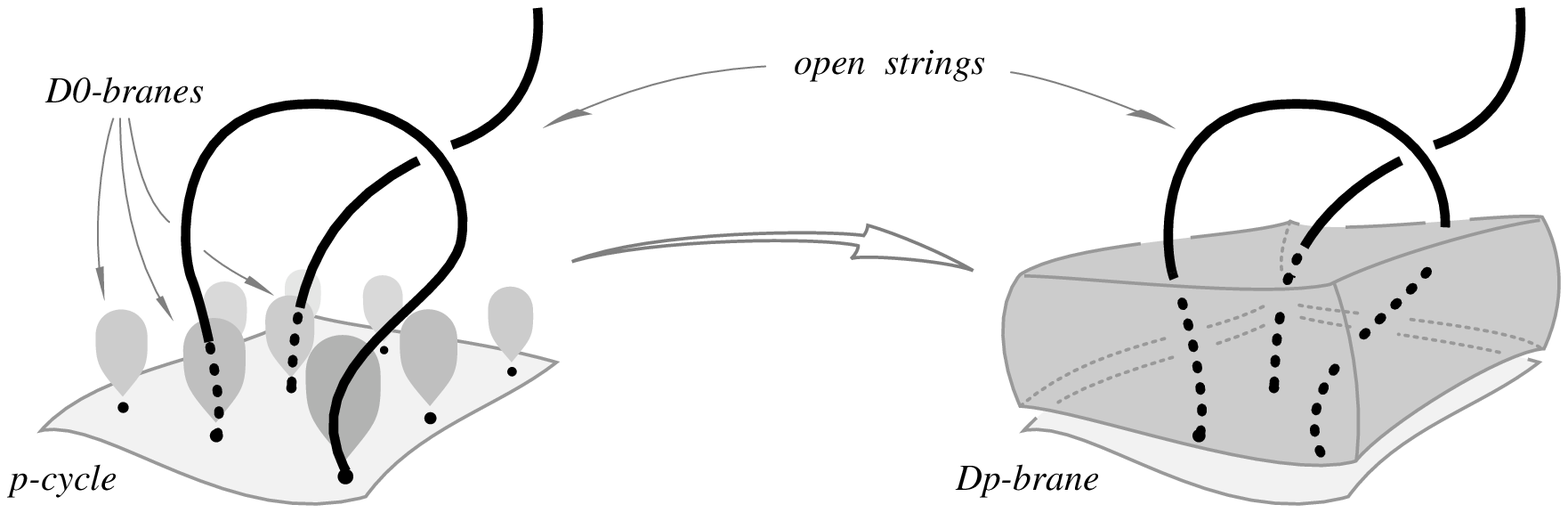,width=16cm}
 \centerline{\parbox{13cm}{\small\baselineskip 12pt
  {\sc Figure} 3-1-1.
  The original stringy operational definition of D-branes
   as objects in the target space(-time) $Y$ of strings
   where end-points of open-strings can and have to stay
  suggests that smearing D$0$-branes along
   a (real) $p$-dimensional cycle/submanifold/distribution $Z$
   in $Y$ renders $Z$ a D$p$-brane.
 Such a smearing in our case is realized as
  a morphism from a manifold/variety $Z$ to the stack
  ${\frak M}^{D0}(Y)$ of D$0$-branes on $Y$.
 In the figure, the Chan-Paton sheaf ${\cal E}$ that carries
  the index information on the end-points of open strings
  is indicated by a shaded cloud.
 Its endomorphism sheaf $\Endsheaf{\cal E}$ carries
  the information of the gauge group of the quantum field theory
  on the D-brane world-volume.
  }}
\end{figure}

\bigskip

\subsection{The stack ${\frak M}_{A\!z(g,r,\chi)^f}(Y,\beta)$
            and its decorated atlas.}

The forgetful functor that forgets sheaves of modules on products
 of prestable curves with $Y$ give rise to a fibration/morphism:
 $$
  \pi_{{\frak M}_g}\;:\;
   {\frak M}_{\Azscriptsize(g,r,\chi)^f}(Y,\beta)\;
   \longrightarrow\; {\frak M}_g\,,
 $$
 where
  ${\frak M}_g$ is the stack of prestable curves of genus $g$.
 It is known that ${\frak M}_g$ is algebraic (Artinian in this case),
  smooth of stacky dimension $3g-3$, but only locally of finite type.
That ${\frak M}_{\Azscriptsize(g,r,\chi)^f}(Y,\beta)$ is algebraic
 (Artinian in this case) follows from the fact that
   both ${\frak M}_g$ and the fiber stacks $\Cohfrak(C\times Y)$
   of coherent ${\cal O}_{C\times Y}$-modules are algebraic.
An atlas of ${\frak M}_{\Azscriptsize(g,r,\chi)^f}(Y,\beta)$
 can be constructed as follows, combining
  the morphism $\pi_{{\frak M}_g}$ and
  the existing construction of atlas
   for ${\frak M}_g$ ([Kn]) and for $\Cohfrak(C\times Y)$ ([L-MB])
  respectively.

\bigskip

Fix an $e\ge 3$.
Then,
 for any stable curve with $n$ marked points $(C;p_1,\,\cdots,\,p_n)$,
 the line bundle $(\omega_C(p_1+\,\cdots\,+p_n))^{\otimes e}$,
  where $\omega_C$ is the dualizing sheaf of $C$,
 is very ample.
Let
 \begin{eqnarray*}
  d  & =
     & e(2g-2+n)\;  =\;
       \degree((\omega_C(p_1+\,\cdots\,+p_n))^{\otimes e})\,,  \\[.6ex]
  P  & =
     & \mbox{Hilbert polynomial of
             $(\omega_C(p_1+\,\cdots\,+p_n))^{\otimes e}$}\,:\\
     && P(m)\; =\; dm+(1-g)\,,  \\[.6ex]
  N  & =  & P(1)-1 \;=\; d-g\,.
 \end{eqnarray*}
Let
 $$
  H_{g,n}\; \subset\; \Hilb_P({\Bbb P}^N)\times ({\Bbb P}^N)^n
 $$
 be the locally closed subscheme
 that parameterizes\footnote{It
                              is standard to write both $H_{g,n}$ here
                               and
                              {\it Quot}$^{\,\circ}
                                         _{(r,\chi_{n^{\prime}})}
                               ({\cal O}_{(C_{g,n}\times Y)/H_{g,n}}
                                 \otimes {\Bbb C}^{N^{\prime}})$
                              in the next more precisely as schemes
                              that represent related moduli functors.}
  tuples $([C], p_1,\,\cdots\,,p_n)$
  where
   $C$ is a subscheme of ${\Bbb P}^N$ of Hilbert polynomial $P$
     that is nodal,
   $p_i\in C$ for $i=1\,\ldots\,,n$, and
   ${\cal O}_{{\Bbb P}^N}(1)|_{C}
    \simeq (\omega_C(p_1+\,\cdots\,p_n))^{\otimes e}$.
Then,
 the morphism $\pi_{g,n}: H_{g,n}\rightarrow {\frak M}_g$
  defined by the tautological family $C_{g,n}$
  of nodal curves over $H_{g,n}$
 is smooth,
  with image an open substack of ${\frak M}_g$ of finite type,
 and
the morphism
 $$
  \pi_g\,:=\; \amalg_n \pi_{g,n}\; :\;
   \amalg_n H_{g,n}\;\longrightarrow\; {\frak M}_g
 $$
 is a smooth epimorphism and defines an atlas on ${\frak M}_g$
 in the fppf topology.

Furthermore, by construction,
 the universal subscheme $C_{g,n}$ over $H_{g,n}$
 is naturally endowed with a relative very ample line bundle, namely
 ${\cal O}_{C_{g,n}/H_{g,n}}(1)
   := \iota_{g,n}^{\ast}{\cal O}_{{\Bbb P}^N}(1)$,
  where $\iota_{g,n}:C_{g,n}\rightarrow {\Bbb P}^N$
  is the tautological morphism.
We will call
 $\{ (H_{g,n}, \pi_{g,n}; {\cal O}_{C_{g,n}/H_{g,n}}(1)) \}_n$
 a {\it decorated atlas}\footnote{It
         is conceptually instructive to think of the system
          $\{ {\cal O}_{C_{g,n}/H_{g,n}}(1) \}_n$
          as defining a {\it twisted relative (very) ample line bundle}
          on the universal curve ${\frak C}_g$ over ${\mathfrak M}_g$
         since there is no relative ample line bundle
          on the whole ${\frak C}_g/{\frak M}_g$.
         Similarly, for the system
          $\{ {\cal O}_{(C_{g,n}\times Y)/H_{g,n}}(1) \}
                                         _{n; N^{\prime}, n^{\prime}}$
         in the next.}
 on ${\frak M}_g$.
By construction,
 ${\cal O}_{C_{g,n}/H_{g,n}}(1)$ has relative degree $d=e(2g-2+n)$.

\bigskip

Fix an ample line bundle ${\cal O}_X(1)$ on $X$ and let
 $H_Y\in A^1(X)$ be the hyperplane class associated to ${\cal O}_Y(1)$.
Then,
 ${\cal O}_{(C_{g,n}\times Y)/H_{g,n}}(1)
  := {\cal O}_{C_{g,n}/H_{g,n}}(1) \boxtimes {\cal O}_X(1)$
 is a relative ample line bundle on $(C_{g,n}\times Y)/H_{g,n}$.
Define
 $$
  \chi_{n^{\prime}}\;
   :=\; \chi +
         n^{\prime}
          \left(\rule{0em}{1em}
           \reldegree({\cal O}_{C_{g,n}/H_{g,n}}(1)) + H_Y\cdot\beta
          \right)\;
    =\; \chi +
         n^{\prime} \left(\rule{0em}{1.2em}
                          re(2g-2+n) + H_Y\cdot\beta \right)\,.
 $$
Let
 $\Quot_{(r,\chi_{n^{\prime}})}^{\,\circ}
   ({\cal O}_{(C_{g,n}\times Y)/H_{g,n}}\otimes{\Bbb C}^{N^{\prime}})$
 be the open subscheme of the Quot-scheme\\
  $\Quot({\cal O}_{(C_{g,n}\times Y)/H_{g,n}}\otimes{\Bbb C}^{N^{\prime}})$
 that parameterizes isomorphism classes of
  quotient ${\cal O}_{C\times Y}$-modules
  ${\cal O}_{C\times Y}\otimes{\Bbb C}^{N^{\prime}}
     \rightarrow \widetilde{\cal E} \rightarrow 0$
  on fibers $C\times Y$ of $(C_{g,n}\times Y)/H_{g,n}$
  such that
  \begin{itemize}
   \item[$\cdot$]
    $\widetilde{\cal E}$ is of relative length $r$
      on $(C\times Y)/C$;

   \item[$\cdot$]
    the Hilbert polynomial
    $P(\widetilde{\cal E}, m)
     =  \left(\rule{0em}{1em}
            re(2g-2+n) + H_Y\cdot\beta \right)\,m
        + \chi_{n^{\prime}}$;

   \item[$\cdot$]
    $H^i(C\times Y, \widetilde{\cal E})=0$, for $i>0$;

   \item[$\cdot$]
    the natural map
     ${\Bbb C}^{N^{\prime}}
      \rightarrow H^0(C\times Y, \widetilde{\cal E})$
     is an isomorphism.
  \end{itemize}
Then,
the universal quotient sheaf twisted by the pull-back of
 ${\cal O}_{(C_{g,n}\times Y)/H_{g,n}}(-n^{\prime})$
 $$
 \begin{array}{l}
  {\cal O}
   _{ \scriptsizeQuot_{(r,\chi_{n^{\prime}})}^{\,\circ}
                ({\cal O}_{C_{g,n}\times Y}\otimes{\Bbb C}^{N^{\prime}})
      \times_{H_{g,n}} (C_{g,n}\times Y) }
    \otimes
    {\cal O}_{(C_{g,n}\times Y )/H_{g,n}}(-n^{\prime})
    \otimes {\Bbb C}^{N^{\prime}}  \\[.6ex]
  \hspace{20em}
   \longrightarrow\;
   \widetilde{\cal F}
    \otimes
    {\cal O}_{(C_{g,n}\times Y )/H_{g,n}}(-n^{\prime})\;
   \longrightarrow\; 0
 \end{array}
 $$
 on
 $\Quot_{(r,\chi_{n^{\prime}})}^{\,\circ}
              ({\cal O}_{C_{g,n}\times Y}\otimes{\Bbb C}^{N^{\prime}})
   \times_{H_{g,n}} (C_{g,n}\times Y)$,
 for $n^{\prime}\ge 0$,
 defines a smooth morphism
 $$
  \Theta_{(n;N^{\prime},n^{\prime})}\; :\;
    \Quot_{(r,\chi_{n^{\prime}})}^{\,\circ}
           ({\cal O}_{C_{g,n}\times Y}\otimes{\Bbb C}^{N^{\prime}})\;
  \longrightarrow\;
  {\frak M}_{\Azscriptsize(g,r,\chi)^f}(Y,\beta)\,.
 $$
The morphism from their union
 $$
  \Theta\; :=\;
   \amalg_{n, N^{\prime}, n^{\prime}\ge 0}\,
    \Theta_{(n; N^{\prime}, n^{\prime})}\; : \;
   \amalg_{n, N^{\prime}, n^{\prime}\ge 0}\,
    \Quot_{(r,\chi_{n^{\prime}})}^{\,\circ}
           ({\cal O}_{C_{g,n}\times Y}\otimes{\Bbb C}^{N^{\prime}})\;
  \longrightarrow\;
  {\frak M}_{\Azscriptsize(g,r,\chi)^f}(Y,\beta)
 $$
 is a smooth epimorphism and defines an atlas on
 ${\frak M}_{\Azscriptsize(g,r,\chi)^f}(Y,\beta)$
 in the fppf topology.
We will call
 $$
  \left\{\,
   \left(
    \Quot_{(r,\chi_{n^{\prime}})}^{\,\circ}
           ({\cal O}_{C_{g,n}\times Y}\otimes{\Bbb C}^{N^{\prime}})\,,
     \Theta_{(n;N^{\prime},n^{\prime})}\,;\,
     {\cal O}_{(C_{g,n}\times Y)/H_{g,n}}(1)
   \right)
  \right\}_{n;N^{\prime},n^{\prime}}
 $$
 a {\it decorated atlas}
 on ${\frak M}_{\Azscriptsize(g,r,\chi)^f}(Y,\beta)$,
 where ${\cal O}_{(C_{g,n}\times Y)/H_{g,n}}(1)$
  is now regarded as a relative ample line bundle on
  $(\Quot_{(r,\chi_{n^{\prime}})}^{\,\circ}
      ({\cal O}_{C_{g,n}\times Y}\otimes{\Bbb C}^{N^{\prime}})
    \times_{H_{g,n}} (C_{g,n}\times Y))
   /\Quot_{(r,\chi_{n^{\prime}})}^{\,\circ}
      ({\cal O}_{C_{g,n}\times Y}\otimes{\Bbb C}^{N^{\prime}})$.

\bigskip

\section{Remarks: D-strings as a master object for curves and
                  D-string world-sheet instantons.}

The moduli stack ${\frak M}_{\Azscriptsize(g,r,\chi)^f}(Y,\beta)$
 will serve as a prototype moduli stack
 to address topological D-strings in this project.
It has its own deformation-obstruction theory
 phrasable in three different aspects.

The content of this stack already renders it
 as a model for a master object for curves:
Via various forgetful functors
 - applied to appropriate substacks and
   with perhaps additional stabilizations -,
 ${\frak M}_{\Azscriptsize(g,r,\chi)^f}(Y,\beta)$ can be related to
 the moduli stack $\overline{M}_g$ of stable curves and
 the moduli stack $\Bun_{(g,r,\chi)}$ of bundles over prestable curves,
  and
 in the genus $0$ case, taking surrogates associated to morphisms
 relates ${\frak M}_{\Azscriptsize(g,r,\chi)^f}(Y,\beta)$
 to Hurwitz schemes.
It is more technical to address how these standard moduli stacks
 for curves can be embedded back into
 ${\frak M}_{\Azscriptsize(g,r,\chi)^f}(Y,\beta)$,
 after allowing reasonable ambiguities.
These ring with an anticipation explained
 in [L-Y2: Introduction and Sec.~4.5],
 which says that
  the moduli space/stack of D-branes should serve
  as a universal/master moduli space/stack for commutative geometry
  from the aspect of
  the Wilson's theory-space picture of stringy dualities,
cf.~Diagram~4-1 (i.e.\ [L-Y1: Diagram A-1-1]) and Figure~4-2.

The notion of (semi-)stability of objects
 in the current moduli problem comes from three sources/origins
 naturally built into the problem.
Choices of this notion define special fillable substacks of finite type
 in ${\frak M}_{\Azscriptsize(g,r,\chi)^f}(Y,\beta)$.
The stack ${\frak M}_{\Azscriptsize(g,r,\chi)^f}(Y,\beta)$
 is part of the ingredients toward a mathematical theory of
 topological D-string world-sheet instanton invariants of $Y$
 as a weighted counting of maps
 from Azumaya prestable curves with a fundamental module to $Y$,
 along the line of the Polchinski-Grothendieck Ansatz.

\begin{figure}[htbp]
 \epsfig{figure=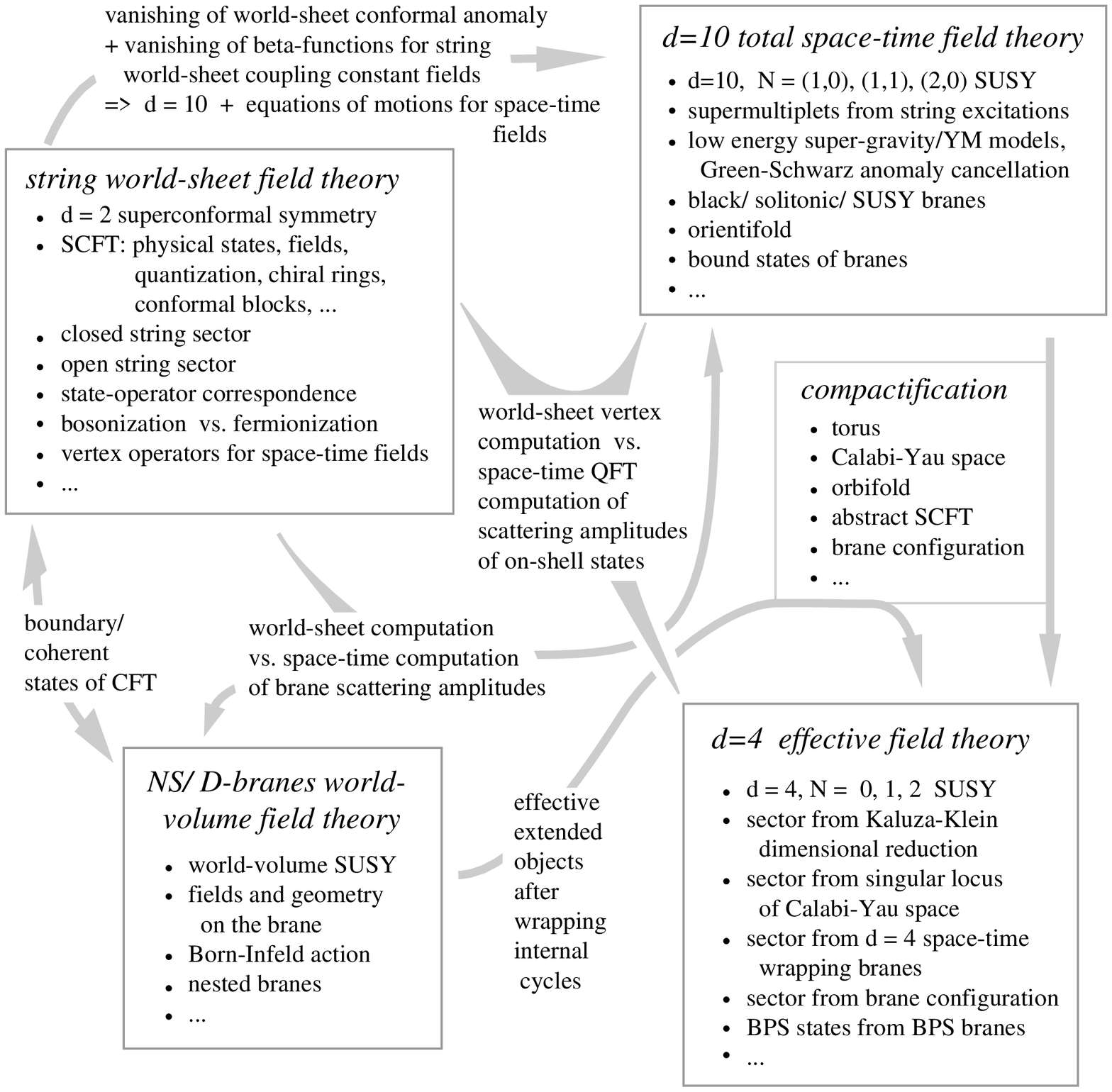,width=16cm}
 \centerline{\parbox{13cm}{\small\baselineskip 12pt
  {\sc Diagram} 4-1. ([L-Y1: Diagram A-1-1].)
  The {\it string world-sheet}, the {\it brane-world-volume},
   the {\it total space-time}, and
   the {\it $4$-dimensional effective field theory}
   aspect of a string theory.
  Scattering amplitudes of fields and D-branes in space-time
   computed via string world-sheet methods and via space-time
   field theory method have to match order by order.
  Each of the four aspects itself has a {\it Wilson's theory-space}
   associated to it, containing all the phases of the field theory
   associated to that aspect of the string theory.
  {\it Dualities} can be realized either as a local isomorphism
   or a coordinate change on the Wilson's theory-space
    ${\cal S}_{\Wilson}$
   that induces an isomorphism on the universal family of
    Hilbert spaces of states, ring of operators, and
    correlation functions of these operators over ${\cal S}_{\Wilson}$.
  In the weak string coupling regime, strings are light and branes
   are heavy and hence string are regarded as more fundamental.
  In the strong string coupling regime, branes can become light and
   strings become heavy and should be no longer treated as
   the most/unique fundamental object in the theory.
  }}
\end{figure}

$ $
\vspace{6em}
\begin{figure}[htbp]
 \epsfig{figure=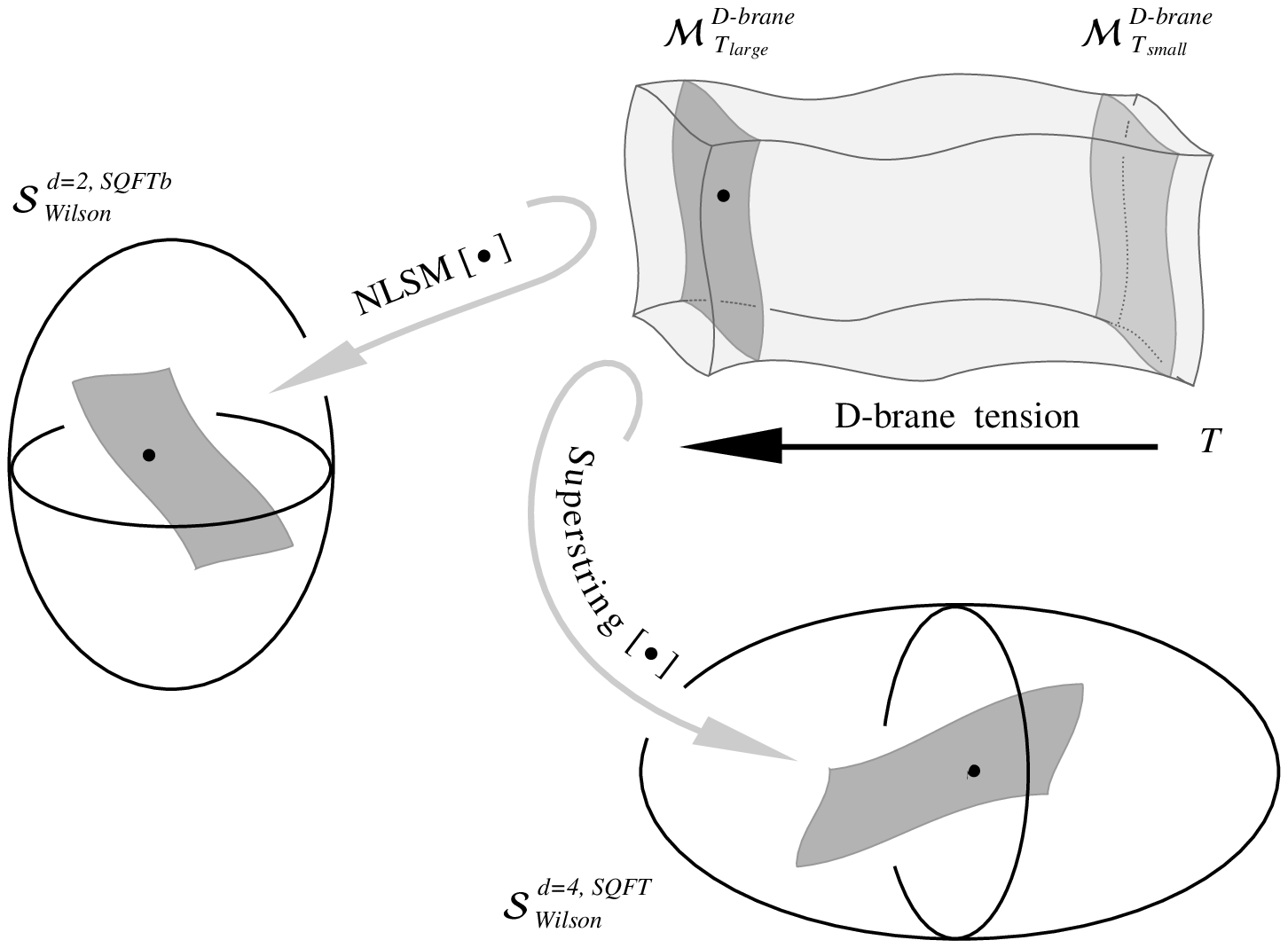,width=16cm}
 \centerline{\parbox{13cm}{\small\baselineskip 12pt
  {\sc Figure} 4-2.
  The moduli space of soft/fundamental D-branes and the
   hard/solitonic D-branes are connected by a continuum
   fibered over the brane-tension $T$-parameter line.
  The moduli space of solitonic D-branes is then embedded
   into the Wilson's theory-space of
    either $d=4$ supersymmetric quantum field theories (SQFT)
    or     $d=2$ supersymmetric quantum field theories-with-boundary
                 (SQFTb)
   via respectively
    the Kaluza-Klein compactification {\it Superstring}$\,[\,\bullet\,]$
    of a $d=10$ superstring model
    or taking nonlinear sigma model-with-boundary
       {\it NLSM}$\,[\,\bullet\,]$
       on the internal geometry-with-D-branes of such a compactification.
  Stringy dualities that arise from D-brane mechanisms
   render then the moduli space of D-branes a master object
   that relates different standard moduli spaces
   from moduli problems in commutative geometry.
  The study of D0-branes in [L-Y2] and D-strings in the current work
   following the Polchinski-Grothendieck Ansatz
   is consistent with such a stringy picture/anticipation.
  }}
\end{figure}

\newpage
{\footnotesize

}


\begin{thebibliography}{AAAAaa}

\bibitem[As]{} P.S.~Aspinwall,
 {\it D-branes on Calabi-Yau manifolds},
 hep-th/0403166.

\bibitem[Do]{} M.R.~Douglas,
 {\it D-branes, categories, and $N=1$ supersymmetry},
 {\sl J.\ Math.\ Phys.}\ {\bf 42} (2001), pp.~2818~-~2843.

\bibitem[D-L-P]{} J.~Dai, G.~Leigh, and J.~Polchinski,
 {\it New connections between string theories},
 {\sl Mod.\ Phys.\ Lett.}\ {\bf A4} (1989), pp.~2073~-~2083.

\bibitem[Fu]{Fu} W.~Fulton,
 {\sl Intersection theory},
 Ser.\ Mod.\ Surv.\ Math.\ 2, Springer, 1984.

\bibitem[Gr]{Gr} A.\ Grothendieck,
 {\it Techniques de construction et th\'{e}or\`{e}mes d'existence
      en g\'{e}om\'{e}trie alg\'{e}brique. IV$\,:$ Les sch\'{e}mas
      de Hilbert },
 {\sl S\'{e}minaire Bourbaki, 1960/1961}, no.\ 221.

\bibitem[Ha]{Ha} R.~Hartshorne,
 {\sl Algebraic geometry},
 GTM 52, Springer, 1977.

\bibitem[H-H-P]{H-H-P} M.~Herbst, K.~Hori, and D.~Page,
 {\it Phases of ${\cal N}=2$ theories in $1+1$ dimensions
      with boundary},
 arXiv:0803.2045 [hep-th].

\bibitem[H-L]{H-L} D.~Huybrechts and M.~Lehn,
 {\sl The geometry of moduli spaces of sheaves},
 Aspects Math.~31, Vieweg, 1997.

\bibitem[Jo]{} C.V.~Johnson,
 {\sl D-branes}, Cambridge Univ.\ Press, 2003.

\bibitem[Kn]{Kn} F.F.~Knudsen,
 {\it The projectivity of the moduli space of stable curves},
 {\it II: The stacks $M_{g,n}$},
  {\sl Math.\ Scand.}\ {\bf 52} (1983), pp.~161~-~199;
 {\it III: Theline bundles on $M_{g,n}$, and a proof of
       the projectivity of $\overline{M}_{g,n}$ in characteristic $0$},
  {\sl ibid.}\ {\bf 52} (1983), pp.~200~-~212.

\bibitem[Ko]{Ko} L.\ Kong,
 {\it Cardy condition for open-closed field algebras},
 arXiv:math/0612255 [math.QA].

\bibitem[Lei]{} R.G.~Leigh,
 {\it Dirac-Born-Infeld action from Dirichlet $\sigma$-model},
 {\sl Mod.\ Phys.\ Lett.}\ {\bf A4} (1989), pp.~2769~-~2772.

\bibitem[L-MB]{L-MB} G.~Laumon and L.~Moret-Bailly,
 {\sl Champs alg\'{e}briques},
 Ser.\ Mod.\ Surveys Math.\ 39, Springer, 2000.

\bibitem[L-Y1]{L-Y1} C.-H.~Liu and S.-T.~Yau,
 {\it Transition of algebraic Gromov-Witten invariants of three-folds
       under flops and small extremal transitions,
      with an appendix from the stringy and the symplectic viewpoint},
 arXiv:math.AG/0505084.

\bibitem[L-Y2]{L-Y2} --------,
 {\it Azumaya-type noncommutative spaces and morphism therefrom:
      Polchinski's D-branes in string theory from Grothendieck's
      viewpoint},
 arXiv:0709.1515 [math.AG].

\bibitem[Mu]{Mu} D.~Mumford,
 {\sl Lectures on curves on an algebraic surface},
 Ann.\ Math.\ Study 59, Princeton Univ.\ Press, 1966.

\bibitem[Po1]{Po} J.~Polchinski,
 {\it Lectures on D-branes},
 in {\sl Fields, strings, and duality}, TASI 1996 Summer School,
 Boulder, Colorado, C.\ Efthimiou and B.\ Greene eds.,
 World Scientific, 1997.
 (hep-th/9611050)

\bibitem[Po2]{} --------,
 {\sl String theory},
 vol.\ I$\,$: {\sl An introduction to the bosonic string};
 vol.\ II$\,$: {\sl Superstring theory and beyond},
 Cambridge Univ.\ Press, 1998.

\bibitem[Se1]{Se1} C.S.~Seshadri,
 {\it Quotient spaces modulo reductive algebraic groups},
 {\sl Ann.\ Math.}\ {\bf 95} (1972), pp.~511~-~556.

\bibitem[Se2]{Se2} --------,
 {\it Fibr\'{e}s vectoriels sur les courbes alg\'{e}briques},
 {\sl Ast\'{e}risque} {\bf 96} (1982), pp.~1~-~209.

\bibitem[Sh]{} E.~Sharpe,
 {\it D-branes, derived categories, and Grothendieck groups},
 {\sl Nucl.\ Phys.}\ {\bf B561} (1999), pp.~433~-~450.
 (hep-th/9902116)

\bibitem[Va]{Va} C.~Vafa,
 {\it Gas of D-branes and Hagedorn density of BPS states},
 {\sl Nucl.\ Phys.}\ {\bf B463} (1996), pp.~415~-~419.
 ({\tt hep-th/9511088})

\bibitem[Wi]{Wi} E.~Witten
 {\it Bound states of strings and $p$-branes},
 {\sl Nucl.\ Phys.}\ {\bf B460} (1996), pp.~335~-~350.
 (hep-th/9510135)

\end{thebibliography}
\end{document}